\let\OLDthebibliography\thebibliography
\renewcommand\thebibliography[1]{
  \OLDthebibliography{#1}
  \setlength{\parskip}{0pt}
  \setlength{\itemsep}{0pt plus 0.3ex} }
\numberwithin{equation}{section}
\theoremstyle{plain}
\newtheorem{theorem}{Theorem}[section]
\newtheorem{proposition}[theorem]{Proposition}
\newtheorem{lemma}[theorem]{Lemma}
\newtheorem{prop}[theorem]{Proposition}
\newtheorem{definition}[theorem]{Definition}
\theoremstyle{definition}
\newenvironment{remark}{\pushQED{\qed} \remarkbase}{\popQED\endremarkbase}
\newenvironment{ex}{\pushQED{\qed} \exbase}{\popQED\endexbase}
\newcommand{\N}{{\mathbb N}}
\newcommand{\R}{{\mathbb R}}
\newcommand{\C}{{\mathbb C}}
\newcommand{\Z}{\mathbb Z}
\newcommand{\T}{{\mathbb T}}
\newcommand{\mE}{\mathcal{E}}
\newcommand{\mF}{\mathcal{F}}
\newcommand{\mH}{\mathcal{H}}
\newcommand{\mI}{\mathcal{I}}
\newcommand{\mN}{\mathcal{N}}
\newcommand{\mS}{\mathcal{S}}
\renewcommand{\a}{\alpha}
\renewcommand{\b}{\beta}
\newcommand{\g}{\gamma}
\renewcommand{\d}{\delta}
\newcommand{\e}{\varepsilon}
\newcommand{\ph}{\varphi}
\newcommand{\lm}{\lambda}
\newcommand{\Om}{\Omega}
\newcommand{\om}{\omega}
\newcommand{\s}{\sigma}
\renewcommand{\t}{\tau}
\renewcommand{\th}{\vartheta}
\newcommand{\la}{\langle}
\newcommand{\ra}{\rangle}
\newcommand{\pa}{\partial}
\DeclareMathOperator{\Lip}{Lip}
\renewcommand{\oe}{\"o}
\newcommand{\rhost}{\rho}
\DeclareMathOperator{\diam}{diam}
\DeclareMathOperator{\dist}{dist}
\newcommand{\eqreff}[1]{(\ref{#1})}
\newcommand{\EE}{{\mathbb{E}}}
\newcommand{\sn}{A} 
\title{A Whitney extension theorem for functions \\ 
taking values in scales of Banach spaces}
\author{\normalsize{Pietro Baldi}} 
\date{} 
\begin{document}

\maketitle

\begin{small}
\noindent
\textbf{Abstract.}
We introduce a modified version of the Whitney extension operators
for collections of functions from a closed subset of $\R^n$ 
into scales of Banach spaces with smoothing operators. 
We prove an extension theorem for collections 
whose elements take values in different spaces of the scale.
A motivation for considering this kind of collections 
comes from very basic observations on the composition of functions 
of more than one real variable. 
The idea at the base of the proof is 
rather natural in the context of scales of Banach spaces, 
and consists in introducing smoothing operators 
in the construction of the extension, 
with smoothing parameters related to the 
diameter of each Whitney dyadic cube. 
Classical examples of scales of Banach spaces with smoothing operators
are also given, and some new related observations are proved.

\noindent
\emph{MSC2020:} 41A10, 46E15.  
\end{small}

\begin{small}
\tableofcontents 
\end{small}

\section{Introduction}
\label{sec:intro}

In this paper we show how 
the construction of the standard Whitney extension operators 
can be modified in order to deal with scales $(E_a)_a$ of Banach spaces, 
and, in particular, with collections $\{ f^{(j)} : |j| \leq k \}$ 
whose elements are functions $f^{(j)} : F \to E_{a_j}$, 
defined on a closed subset $F$ of $\R^n$, 
taking values into different spaces $E_{a_j}$ of the scale. 

The concrete example we have in mind 
regards functions $u(x,\th)$ where $x$ varies in a closed set $F \subset \R^n$, 
and, for each $x \in F$, the function $u(x) = u(x, \cdot) : \th \mapsto u(x,\th)$ 
belongs to some function space, which is, e.g., 
the Sobolev space $H^s(\R^d,\C)$, 
or the space $C^m(\T^d,\C)$ of periodic, continuously differentiable functions
of the variable $\th \in \R^d$, 
or the space of functions with decay $O(|\th|^{-m})$ as $|\th| \to \infty$, 
etc.; 
moreover, $u$ is differentiable \emph{\`a la} Whitney with respect to $x$ on $F$, 
but every time we differentiate $u$ with respect to $x$, 
the regularity (or the decay, etc.) of $u$ with respect to $\th$ 
deteriorates of a fixed amount $\d > 0$,
so that, if, say, $u(x) = u(x, \cdot) \in H^s(\R^d)$, 
then the first (Whitney) partial derivatives $\pa_{x_i} u(x, \cdot)$ 
belong to $H^{s-\d}(\R^d)$, 
the second derivatives $\pa_{x_i x_j} u(x, \cdot)$ 
belong to $H^{s-2\d}(\R^d)$,
and so on. 
We will observe below how similar situations emerge naturally 
from the analysis of the composition of functions.

Although motivated by these concrete examples, 
our construction is entirely abstract, in the sense that 
it applies to any scale of Banach spaces equipped with smoothing operators. 

Before introducing our main result (which is Theorem \ref{thm:WET sigma}), 
explaining in more details its motivation 
and what happens when trying to adapt directly the classical proof to our case, 
we make a step back and start with recalling 
the classical Whitney extension theorem, following Stein's book \cite{S}.

\subsection{Classical Whitney extension theorem} 
\label{subsec:classical WET}

Let $\N := \{0,1,\ldots \}$ denote the set of non-negative integers. 
Recall the standard multi-index notation: 
for $j = (j_1, \ldots, j_n) \in \N^n$ 
and $x = (x_1, \ldots, x_n) \in \R^n$, $n \geq 1$, 
we denote 
\begin{equation} \label{multi-index notation}
\begin{aligned}
j! & = j_1! \, j_2! \, \cdots j_n!, \quad
& 
x^j & = x_1^{j_1} \, x_2^{j_2} \, \cdots x_n^{j_n}, 
\\
|j| & = j_1 + \ldots + j_n, 
& 
\pa_x^j & 
= \pa_{x_1}^{j_1} \, \pa_{x_2}^{j_2} \, \cdots \pa_{x_n}^{j_n}.
\end{aligned}
\end{equation}

\begin{definition} \emph{(The space $\Lip(\rhost,F)$, from \cite{S})}.
\label{def Lip Stein}
Let $F \subseteq \R^n$ be a closed set, 
$k \geq 0$ an integer, 
and $k < \rhost \leq k+1$. 
We say that a collection 
$\{ f^{(j)} : j \in \N^n, \ 0 \leq |j| \leq k \}$ 
of real functions $f^{(j)} : F \to \R$ 
belongs to $\Lip(\rhost,F)$ 
if there exists a constant $M \geq 0$ 
such that the functions 
$f^{(j)}$ and the remainders $R_j$ defined by Taylor's formula
\begin{equation} \label{2502.1}
f^{(j)}(x) = 
\sum_{\begin{subarray}{c} 
\ell \in \N^n \\ 
|j+\ell| \leq k
\end{subarray}} 
\frac{1}{\ell!}\, f^{(j+\ell)}(y) (x-y)^\ell
+ R_j(x,y)
\end{equation}
satisfy
\begin{equation}
\label{0408.1}
|f^{(j)}(x)| \leq M, 
\quad \  
|R_j(x,y)| \leq M |x-y|^{\rhost-|j|}
\quad \ \forall x,y \in F, \ |j| \leq k.
\end{equation}
The norm of an element 
$f = \{ f^{(j)} : 0 \leq |j| \leq k \}$ 
of the space $\Lip(\rhost,F)$ 
is defined as 
\begin{equation} \label{0203.1}
\| f \|_{\Lip(\rhost,F)} 
:= \inf \{ M \geq 0 : \eqreff{0408.1} \text{ holds}\, \}.
\end{equation}
\end{definition}

On a closed set $F$, in general, the functions $f^{(j)}$, $1 \leq |j| \leq k$, 
are not uniquely determined by $f^{(0)}$;
on the other hand, if $F = \R^n$, then $f^{(0)}$ determines $f^{(j)}$  
as its partial derivatives.

\begin{theorem} \emph{(Whitney extension theorem, from \cite{S})}.
\label{thm:WET Stein}
Let $F, k, \rhost$ be like in Definition \ref{def Lip Stein}. 
There exists a linear continuous extension map 
\[
\mE_k : \Lip(\rhost,F) \to \Lip(\rhost,\R^n)
\]
which to every $f \in \Lip(\rhost,F)$ 
gives an extension $\mE_k f$ to all of $\R^n$,  
with bound
\[
\| \mE_k f \|_{\Lip(\rhost,\R^n)} 
\leq C \| f \|_{\Lip(\rhost,F)}
\]
where $C$ is independent of $F$. 
\end{theorem}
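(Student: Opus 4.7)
The plan is to follow the classical Whitney construction as in Stein. Let $\Omega := \R^n \setminus F$ and take a Whitney decomposition of $\Omega$ into a countable family of closed dyadic cubes $\{Q_i\}_i$ with pairwise disjoint interiors satisfying $\diam(Q_i) \leq \dist(Q_i, F) \leq 4 \diam(Q_i)$ up to dimensional constants. Choose a smooth partition of unity $\{\ph_i\}_i$ subordinate to slightly enlarged cubes $Q_i^*$, with bounded overlap and $|\pa^\ell \ph_i(x)| \leq C_\ell \diam(Q_i)^{-|\ell|}$. For each cube $Q_i$ pick $p_i \in F$ with $\dist(p_i, Q_i) \leq C \diam(Q_i)$, and let
\[
P_{p_i}(x) := \sum_{|\ell| \leq k} \frac{1}{\ell!}\, f^{(\ell)}(p_i)\,(x-p_i)^\ell
\]
be the Taylor polynomial at $p_i$ of the data. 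Define the extension by
\[
\mE_k f(x) :=
\begin{cases}
f^{(0)}(x), & x \in F, \\
\sum_i \ph_i(x)\, P_{p_i}(x), & x \in \Omega.
\end{cases}
\]
Linearity of $f \mapsto \mE_k f$ and $C^\infty$-smoothness on $\Omega$ are immediate from the construction.

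The core of the argument is to prove that the collection $\{\pa^j \mE_k f : |j| \leq k\}$, defined classically on $\Omega$ and equal to $\{f^{(j)}\}$ on $F$, lies in $\Lip(\rhost, \R^n)$. By Leibniz,
\[
\pa^j \mE_k f(x)
= \sum_i \sum_{\ell \leq j} \binom{j}{\ell}\, \pa^\ell \ph_i(x)\, \pa^{j-\ell} P_{p_i}(x) \qquad (x \in \Omega),
\]
and the key trick is to use $\sum_i \pa^\ell \ph_i \equiv 0$ for $|\ell| \geq 1$ to insert an ``anchor'' Taylor polynomial at a boundary point $\bar x \in F$ and rewrite
\[
\pa^j \mE_k f(x) - \pa^j P_{\bar x}(x)
= \sum_i \sum_{\ell \leq j} \binom{j}{\ell}\, \pa^\ell \ph_i(x)\, \bigl[\pa^{j-\ell} P_{p_i}(x) - \pa^{j-\ell} P_{\bar x}(x)\bigr].
\]
For cubes $Q_i$ active near $\bar x$, both $|p_i - \bar x|$ and $\diam(Q_i)$ are comparable to $|x - \bar x|$, so that bounding the bracket via the Whitney remainder estimate \eqref{0408.1} (applied at $p_i$ and $\bar x$) and summing using the bounded overlap yields $|\pa^j \mE_k f(x) - \pa^j P_{\bar x}(x)| \leq C M |x-\bar x|^{\rhost - |j|}$.

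One then concludes by checking the $\Lip(\rhost, \R^n)$ condition on the three cases for the remainder $R_j(x,y)$ of $\mE_k f$: both $x,y \in F$ (directly from \eqref{0408.1}), both $x,y \in \Omega$ (combining the previous estimate with Taylor's formula inside $\Omega$), and the mixed case (reduced to the previous two by going through a nearest point of $F$ to the $\Omega$ argument). The main obstacle is precisely the boundary estimate described above: everything hinges on the anchor-polynomial subtraction and on the geometric comparison $|p_i - \bar x| \sim \diam(Q_i) \sim |x - \bar x|$ for the cubes contributing at $x$; once this is set up, the partition-of-unity bounds together with the hypothesis \eqref{0408.1} give both the pointwise convergence $\pa^j \mE_k f(x) \to f^{(j)}(\bar x)$ as $x \to \bar x$ and the uniform remainder estimate, with a constant $C$ depending only on $n, k, \rhost$.
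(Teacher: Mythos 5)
Your plan is the classical Stein--Whitney construction, which is exactly what the paper relies on for this statement (Theorem \ref{thm:WET Stein} is quoted from \cite{S}, and the paper's own detailed proof of Theorem \ref{thm:WET sigma} in Section \ref{sec:proof} follows the same skeleton: dyadic decomposition, partition of unity, anchor Taylor polynomial inserted via $\sum_i \pa^\ell \ph_i^* = 0$, the difference-of-polynomials identity \eqref{P-P j}, and the three-case check of the remainder, with smoothing operators added). One concrete gap: as written, the sum $\sum_i \ph_i(x) P_{p_i}(x)$ runs over \emph{all} Whitney cubes, and this does not define an element of $\Lip(\rhost,\R^n)$. For $x$ with $\dist(x,F)$ large, the cubes active at $x$ have $\diam(Q_i) \sim \dist(x,F)$, hence $|x-p_i| \sim \dist(x,F)$ and $|P_{p_i}(x)| \sim M\,\dist(x,F)^k$, so for $k \geq 1$ the extension is unbounded and the first condition in \eqref{0408.1}, $|f^{(j)}(x)| \leq M$, fails for $\mE_k f$. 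You must restrict the sum to cubes close to $F$ (diameter $\leq 1$, the set $\mN$ of \eqref{def.mN}; compare \eqref{def g0}), so that the extension vanishes at $\dist(x,F) > 6$; this forces a separate, easy treatment of the intermediate region $1/2 \leq \dist(x,F) \leq 6$ (the analogues of Lemmas \ref{lemma:cubi lontani}--\ref{lemma:Rj lontani}), which your boundary estimate, valid only near $F$, does not cover.

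Apart from that omission the argument is sound: the anchor comparison $|p_i-\bar x| \sim \diam(Q_i) \sim |x-\bar x|$ is legitimate because you take $\bar x$ a nearest point of $F$ to $x$, and you correctly note that general $y \in F$ and the $\Omega$--$\Omega$ case are reduced to it by routing through nearest points (this is Lemmas \ref{lemma:piano.3}, \ref{lemma:segment.close}, \ref{lemma:segment.small} in the paper's version). Make sure the bracket $\pa^{j-\ell}P_{p_i}-\pa^{j-\ell}P_{\bar x}$ is expanded by the identity \eqref{P-P j} so that it is the remainders $R_{m}(p_i,\bar x)$ between two points of $F$ that are estimated by \eqref{0408.1}.
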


As pointed out in \cite{S}, 
Theorem \ref{thm:WET Stein} also holds for
more general modulus of continuity $\om(|x-y|)$ in place of $|x-y|^{\rhost-k}$.

\subsection{Whitney extension theorem for a fixed Banach space} 
\label{subsec:fixed.Banach.space}

Whitney’s extension problems go back 
to the 1934 works \cite{W-anal}, \cite{W-diff} of Whitney.
Since then, they have been studied by several authors and 
generalized to any set $F \subset \R^n$ (not only a closed one), 
to extensions that are $C^m(\R^n,\R)$, or $C^{m,\om}(\R^n,\R)$ (i.e.\ functions 
whose $m$th derivative has modulus of continuity $\om$), 
or Sobolev $W^{m,p}(\R^n,\R)$, 
also with accurate quantitative estimates 
for polynomial approximations: 
the reference literature certainly includes the recent series of works of 
Fefferman, also with Klartag  
(see e.g.\ \cite{F-Annals-2005}, \cite{F-Annals-2006}, \cite{FK}, \cite{FK-2}), 
Bierstone, Milman, Paw\l{}ucki 
(e.g.\ \cite{BMP-Inventiones}, \cite{BMP-Annals}), 
Shvartsman (e.g.\ \cite{Shv});
see the expository paper \cite{F-expo} 
for a richer list of references 
and for an overview of some recent results on the topic. 

In this paper, as said above, we are interested 
to generalizing Whitney's results 
in another direction, which is the one 
of extending functions defined on $F \subset \R^n$
taking values on scales of Banach spaces. 

If, instead of a scale, we consider \emph{one} Banach space $Y$, 
then the validity of Whitney extension theorem is well-known:
see e.g.\ Federer \cite{Fed}, page 225. 
In particular, consider the following, 
obvious adaptation of Definition \ref{def Lip Stein}. 

\begin{definition} \emph{(The space $\Lip(\rhost,F,Y)$).}
\label{def Lip Y}
Let $F \subseteq \R^n$ be a closed set, 
$k \geq 0$ an integer, 
$k < \rhost \leq k+1$, 
and let $Y$ be a (real or complex, finite or infinite-dimensional) Banach space.  
We say that a collection 
$\{ f^{(j)} : j \in \N^n, \ 0 \leq |j| \leq k \}$ 
of functions $f^{(j)} : F \to Y$ 
belongs to $\Lip(\rhost,F,Y)$ 
if there exists a constant $M \geq 0$ 
such that the functions 
$f^{(j)}$ and the remainders $R_j$ 
defined by Taylor's formula \eqref{2502.1} satisfy
\begin{equation}
\label{2502.Y}
\| f^{(j)}(x) \|_Y \leq M, 
\quad \  
\| R_j(x,y) \|_Y \leq M |x-y|^{\rhost-|j|},
\quad \ 
\forall x,y \in F, \ |j| \leq k.
\end{equation}
The norm of an element 
$f = \{ f^{(j)} : 0 \leq |j| \leq k \}$ 
of the space $\Lip(\rhost,F,Y)$ 
is defined as 
\begin{equation} \label{0203.1.Y}
\| f \|_{\Lip(\rhost,F,Y)} 
= \inf \{ M \geq 0 : \eqreff{2502.Y} \text{ holds}\, \}.
\end{equation}
\end{definition}

As observed in Appendix B of \cite{BBHM}, 
the proof of Theorem \ref{thm:WET Stein} in Stein's book \cite{S} 
holds {with no change} for the space $\Lip(\rhost,F,Y)$,
and the extension operator $\mE_k$ satisfies 
\begin{equation} \label{ext.Y}
\| \mE_k f \|_{\Lip(\rhost,\R^n,Y)} 
\leq C \| f \|_{\Lip(\rhost,F,Y)}
\end{equation}
with $C$ independent on the closed set $F$ 
and on the Banach space $Y$.

\subsection{What is the point with the composition of functions}
\label{subsec:what.comp}

The extension theorem for functions $f^{(j)} : F \to Y$ 
taking values in a fixed Banach space $Y$ 
(see Definition \ref{def Lip Y} and \eqref{ext.Y})
holds, in particular, when $Y$ is a function space, 
which could be, for example, a Sobolev space, a $C^m$ space, a H\oe{}lder space, 
etc. 

Suppose that a function space $Y$ contains not only the sum $u+v$, 
but also the pointwise product $uv$ and the composition $u \circ v$ 
of any two functions $u,v \in Y$.
The composition operation then behaves differently from the operations 
of sum and product when introducing the dependence on a ``parameter'' 
$x \in F \subset \R^n$, 
namely when passing from considering elements $u,v$ of $Y$ 
to considering functions $F \to Y$, i.e.\ elements $u(x), v(x)$ of $Y$ 
depending on $x \in F \subset \R^n$. 
The central point is that the norm of $\Lip(\rho,F,Y)$ 
is not the most adapt one 
when dealing with the composition of functions. 
To show it, we are going to consider some elementary examples. 
These observations come before the question about the possibility 
of extending an element of $\Lip(\rho,F,Y)$ to one in $\Lip(\rho,\R^n,Y)$; 
they rather involve the passage from elements of $Y$ 
to functions of $x \in F$ taking values in $Y$. 
To not mix these two aspects, 
we consider $F=\R$ in the following examples.

\medskip

\begin{ex} \label{ex:comp.1} 
Let $Y = C_b(\R,\R)$  
be the set of bounded, continuous functions $u : \R \to \R$,
equipped with the sup-norm 
$\| u \|_Y = \| u \|_\infty
= \sup \{ |u(\th)| : \th \in \R \}$. 
The sum $u+v$, 
the product $uv$
and the composition $u \circ v$
all belong to $Y$ for all $u,v \in Y$.

Now we introduce the dependence on a real variable $x$ 
in the role of a parameter, and  
we use Definition \ref{def Lip Y} to describe 
the regularity with respect to the parameter $x$. 
Let $k=0$, $\rhost = 1$, $F = \R$. 
Hence $\Lip(\rhost,F,Y) = \Lip(1,\R,Y)$
is the space of functions $f : \R \to Y$ such that 
\begin{equation} \label{1108.1}
\| f(x) \|_Y \leq M,
\quad \ 
\| R_0(x,y) \|_Y \leq M |x-y|,
\quad \ \forall x,y \in F,
\end{equation}
namely, denoting $f(x,\th) := f(x)(\th)$,  
\begin{align*}
| f(x,\th) | \leq M, 
\quad \ 
| f(x,\th) - f(y,\th) | \leq M |x-y|,
\quad \ 
\forall x,y,\th \in \R,
\end{align*}
for some constant $M > 0$. 
The sum $f+g$ and the product $fg$ are in $\Lip(1,\R,Y)$ 
for all $f,g \in \Lip(1,\R,Y)$, 
but, in general, the composition 
\begin{equation} \label{def.h.fg}
h(x) := f(x) \circ g(x), \quad \ 
h(x)(\th) = f(x) \big( g(x)(\th) \big), \quad  
\text{i.e.} \ 
h(x,\th) = f(x, g(x,\th)), 
\end{equation}
does not belong to $\Lip(1,\R,Y)$:
consider, for example, 
\[
f(x,\th) := \begin{cases} 
\sqrt{\th} & \text{for} \ 0 \leq \th \leq 1, \\
0 & \text{for} \ \th < 0, \\
1 & \text{for} \ \th > 1,
\end{cases}
\qquad \quad 
g(x,\th) := 
\begin{cases} 
x & \text{for} \ 0 \leq x \leq 1, \\
0 & \text{for} \ x < 0, \\
1 & \text{for} \ x > 1.
\end{cases}
\]
Both $f$ and $g$ belong to $\Lip(1,\R,Y)$, 
but $h(x,\th) = f(x, g(x,\th))$ does not, because
$h(x,\th) = \sqrt{x}$ for all $x \in [0,1]$. 
Thus $Y$ is closed for composition, 
namely $u \circ v \in Y$ for all $u,v \in Y$,
while $\Lip(1,\R,Y)$ is \emph{not} closed 
for the operation of composition of elements of $Y$. 
\end{ex}

The same issue with the composition operation 
is also present if, 
unlike in Example \ref{ex:comp.1}, 
the amount of derivatives (or incremental ratios) with respect to $x$ 
is not larger than the one with respect to $\th$. 
The next example shows this.

\begin{ex} \label{ex:comp.2} 
Let $Y = C^1_b(\R,\R)$ be the set of $C^1$ functions $u : \R \to \R$
with finite norm 
\[
\| u \|_Y := \| u \|_{C^1} := \max_{m=0,1} \, \sup_{\th \in \R} |\pa_\th^m u(\th)| 
\, < \infty.
\]
For all $u,v \in Y$, the composition $u \circ v$ belongs to $Y$. 
As in Example \ref{ex:comp.1}, 
let $k=0$, $\rho = 1$, $F = \R$. 
Hence $\Lip(\rho,F,Y) = \Lip(1,\R,Y)$
is the space of functions $f : \R \to Y$ 
satisfying \eqref{1108.1} for some $M > 0$, 
namely, denoting $f(x,\th) := f(x)(\th)$,  
\begin{alignat*}{2}
|f(x,\th)| & \leq M, 
\quad \ & 
|f(x,\th) - f(y,\th)| & \leq M |x-y|, 
\\
|\pa_\th f(x,\th)| & \leq M , 
\qquad  & 
|\pa_\th f(x,\th) - \pa_\th f(y,\th)| & \leq M |x-y|,
\end{alignat*}
for all $x,y,\th \in \R$.
Now fix a $C^\infty$ function $\psi : \R \to \R$ 
such that $\psi(t) = 1$ for all $|t| \leq 1$ 
and $\psi(t) = 0$ for all $|t| \geq 2$. 
For all $x,\th \in \R$, let 
\[
f(x,\th) := |\th|^{\frac32} \psi(\th),
\qquad 
g(x,\th) := (x+\th) \psi(x+\th).
\]
Thus $f,g \in \Lip(1,\R,Y)$. 
Let $h(x) := f(x) \circ g(x)$, like in \eqref{def.h.fg}. 
For $x,\th$ in the strip $|x+\th| \leq 1$ 
one has $h(x,\th) = |x+\th|^{3/2}$ 
and $\pa_\th h(x,\th) = \frac32 |x+\th|^{1/2} \mathrm{sign}(x+\th)$.
In particular, $\pa_\th h(x,0) = \frac32 \sqrt{x}$ for all $x \in [0,1]$.
Hence the ratio $|\pa_\th h(x,\th) - \pa_\th h(y,\th)| / |x-y|$ 
with $\th=0$, $y=0$, $x \in (0,1]$ has no finite upper bound; 
as a consequence, $h$ does not belong to $\Lip(1,\R,Y)$.  
This shows that, like in Example \ref{ex:comp.1}, 
$Y$ is closed for composition, 
but $\Lip(1,\R,Y)$ is not. 
\end{ex}

The issue with the composition operation is even more evident if 
we replace the Whitney regularity with respect to $x$ of Definition \ref{def Lip Y}
with the standard $C^k$ regularity of continuously differentiable functions;
this means that the Lipschitz continuity in $x$ 
of Examples \ref{ex:comp.1}-\ref{ex:comp.2} is replaced 
by the $C^1$ regularity with respect to $x$.  
Moreover the question can also be formulated 
in terms of bounds for norms and seminorms of $C^\infty$ functions.
Slightly informally, the next example shows this.

\begin{ex} \label{ex:comp.3} 
For functions $f \in C^\infty(\R^2, \R)$, 
for $k,m \in \N$, define
\[
\| f \|_{C^k_x C^m_\th}
:= \sup \big\{ \big| \pa_x^j \pa_\th^\ell f(x,\th) \big| : 
j \leq k, \ \, \ell \leq m, \ (x,\th) \in \R^2 \big\}.
\]
For $f, g \in C^\infty(\R^2,\R)$,  
the function $h(x,\th) := f(x, g(x,\th))$ 
has partial derivative 
\[
\pa_x h(x,\th) 
= (\pa_x f)(x, g(x,\th)) + (\pa_\th f)(x, g(x,\th)) \, \pa_x g(x,\th),
\]
and one has the natural bound
\begin{equation} \label{0803.1}
\| h \|_{C^1_x C^0_\th} 
\leq \| f \|_{C^1_x C^0_\th} + \| f \|_{C^0_x C^1_\th} \| g \|_{C^1_x C^0_\th}.
\end{equation}
If we are constrained to use the same regularity in $\th$ 
both for $f$ and for $\pa_x f$, 
then we have to bound both
$\| f \|_{C^1_x C^0_\th}$ and $\| f \|_{C^0_x C^1_\th}$ 
with $\| f \|_{C^1_x C^1_\th}$,
so that \eqref{0803.1} is deteriorated to 
\begin{equation} \label{0803.2}
\| h \|_{C^1_x Y_0} 
\leq \| f \|_{C^1_x Y_1} (1 + \| g \|_{C^1_x Y_0})
\end{equation}
where, to emphasize the similarity with the norms in Definition \ref{def Lip Y},
we have denoted 
$\| \  \|_{C^1_x Y_0} := \| \  \|_{C^1_x C^0_\th}$
and 
$\| \  \|_{C^1_x Y_1} := \| \  \|_{C^1_x C^1_\th}$.
Bound \eqref{0803.2} contains an artificial ``loss of regularity'' 
in estimating $h$ in terms of $f,g$, 
which is merely due to considering
the same fixed regularity in $\th$ 
both for $f$ and for $\pa_x f$.
\end{ex}

There is an obvious way of eliminating 
the artificial ``loss of regularity'' of Example \ref{ex:comp.3},
which simply consists in not limiting ourselves to use 
only norms corresponding to Definition \ref{def Lip Y}, 
but allowing the use of more natural norms.  
The following example shows this.

\begin{ex} \label{ex:comp.4} 
Consider Example \ref{ex:comp.3}, and define 
$\| f \|_{1} := \max \{ \| f \|_{C^0_x C^1_\th}, \ \| f \|_{C^1_x C^0_\th} \}$.
Then, from \eqref{0803.1}, we obtain
\[
\| h \|_1 \leq \| f \|_1 (1 + \| g \|_1),
\]
which is a composition estimate similar to \eqref{0803.2} but without artificial losses.
\end{ex}

The elementary examples above show that, 
to avoid artificial losses in the composition estimates, 
it is natural to introduce a ``decreasing regularity'' version 
of the norms $\| \ \|_{C^k_x C^m_\th}$, 
where each $x$-derivative more 
is compensated by a $\th$-derivative less: 
\begin{equation} \label{def dr CC}
\| f \|_{C^k_x C^m_\th}' 
:= \max \big\{ \| f \|_{C^0_x C^m_\th} \,, \| f \|_{C^1_x C^{m-1}_\th} \,, 
\ldots, \| f \|_{C^k_x C^{m-k}_\th} \big\},
\quad \ m \geq k.
\end{equation}
In fact, this is similar to consider the joint regularity $C^m$ 
in the pair $(x,\th)$.  

This kind of ``decreasing regularity'' norms 
can be generalized by considering higher dimension $x \in \R^n$, $\th \in \R^d$, 
other kind of regularities (H\oe{}lder, Sobolev, etc.{}) 
in $\th$ and/or in $x$ instead of $C^m$, 
a more general balance between regularity in $x$ and in $\th$ 
(where one derivative in $x$ 
counts like $\d$ derivatives in $\th$), 
and so on.

``Decreasing regularity'' norms are very natural, and commonly used, 
for solutions of evolution PDEs: 
for example, the solution $u(t,x)$ of the Schr\oe{}\-din\-ger equation 
$\pa_t u + i \Delta u = 0$ on $\R^d$ 
with initial datum $u(0,x) = u_0(x)$ 
in the Sobolev space $H^s_x = H^s(\R^d)$ 
is $C^0_t H^s_x \cap C^1_t H^{s-2}_x$, 
where $(t,x)$ correspond to $(x,\th)$ above; 
here one derivative in $t$ counts like $2$ derivatives in $x$. 

Arising from different problems, 
``decreasing regularity'' norms have been recently introduced 
in the context of KAM theory in \cite{BM-Euler}
for Sobolev periodic functions $H^s(\T^d,\R^3)$, 
where the parameter $x$ is a frequency vector that varies in $\R^n$ 
and has its significance only when it is restricted 
to a ``Cantor-like'' closed subset of $\R^n$ 
of ``nonresonant frequencies'' (as is typical in KAM results).

\medskip

Motivated by the basic observations above about the composition of functions,
the goal of this paper is to construct Whitney extension operators,
similar to those in Theorem \ref{thm:WET Stein},
which are adapted to ``decreasing regularity'' norms like \eqref{def dr CC}, 
and which work not only for functions of $C^m$ regularity in $\th$, 
but also for those of H\oe{}lder or Sobolev regularity. 

Instead of proceeding by cases, we consider 
more general scales of Banach spaces equipped with smoothing operators 
(see subsection \ref{subsec:scales}), 
which constitute an abstract setting applicable 
to a variety of concrete cases.

\subsection{Why the classical proof does not work here} 
\label{subsec:nontrivial.proof}

The extension operators $\mE_k$ constructed in Stein's book \cite{S} 
do not preserve ``decreasing regularity'' norms like those in \eqref{def dr CC}: 
trying to follow the proof in \cite{S}, 
almost immediately one finds terms 
having less regularity in $\th$ than what is needed. 
In fact, given a collection $f = \{ f^{(j)} : |j| \leq k \}$ 
defined on a closed set $F \subset \R^n$, 
its extension at any point $x \in \Om := \R^n \setminus F$  
is defined in \cite{S} as a sum of the form 
\begin{equation} \label{farlocco}
\sum_{i} \sum_{|\ell| \leq k} 
\frac{1}{\ell!} f^{(\ell)}(p_i) (x-p_i)^\ell \ph_i^*(x)
\end{equation}
where $(\ph_i^*)$ is a partition of unity related to Whitney's decomposition  
(see Proposition \ref{prop:partition of unity} below),  
the sum $\sum_{i}$ is over cubes close to $F$, 
and $p_i$ are points in $F$. 
Now consider ``decreasing regularity'' norms: 
let $m \geq k$, and suppose, for example, that, for every $x \in F$, 
$f^{(j)}(x)$ is a $C^{m-|j|}$ function of $\th$.  
Then at $x \in \Om$ the sum \eqref{farlocco} 
is a $C^{m-k}$ function of $\th$
(like the less regular of its terms),
while the required regularity in $\th$ for the extension \eqref{farlocco}
would be the same as $f^{(0)}$, namely $C^m$. 

Also, trying to change naively the proof of \cite{S}, 
one encounters terms having less regularity in $\th$ 
and more smallness in $x$ (namely higher powers of $|x-y|$ 
where $x,y$ are two values of the parameter $x$), 
and other terms that, vice versa, 
are more regular than necessary but are not sufficiently small.

A classical, effective tool to ``convert regularity into size factors'' 
and vice versa is given by smoothing operators,
which we insert in the construction of the extension. 
In concrete function spaces, smoothing operators 
are usually obtained by convolution with mollifiers 
or other Fourier cut-offs (see section \ref{sec:list});
they have been used to deal with ``loss of regularity'' problems
at least since the work of Nash \cite{Nash}, 
and are a key ingredient in implicit function theorems 
for scales of Banach spaces, namely theorems of Nash-Moser type
(see e.g.\ \cite{Nash}, \cite{Zehnder}, \cite{Geodesy}, \cite{AG}, 
the recent versions in \cite{Berti-Bolle-Procesi}, \cite{ES}, 
and the sharp one in \cite{BH-NMH}).

In our construction we relate the smoothing parameter $\theta$ 
to the distance of points 
from the closed set $F$ using the diameter of each Whitney dyadic cube, 
see subsection \ref{subsec:def g} and, in particular, definition \eqref{def theta}. 
This is the key ingredient for adapting almost completely 
the proof of \cite{S} to the present case. 

There is, in addition, a point of the proof where another smoothing parameter 
has to be introduced; since the explanation requires more technical details, 
we postpone it to Remark \ref{rem:two decomp}.

\bigskip

\textbf{Acknowledgments.} 
We thanks 
L. Asselle, 
G. Benedetti, 
M. Berti, 
L. Franzoi, 
F. Giuliani, 
C. Guillarmou, 
E. Haus, 
I. Marcut, 
R. Montalto, 
F. Murgante, 
P. Ventura 
for interesting discussions 
related to the topics of this paper.
Supported by INdAM-GNAMPA Project 2019.

\section{An extension theorem for scales of Banach spaces}
\label{sec:2}

In subsection \ref{subsec:scales} we define 
the scales of Banach spaces with smoothing operators.
In subsection \ref{subsec:main result} 
we state our extension result (Theorem \ref{thm:WET sigma}).
In subsection \ref{subsec:corollary} we give an application if it.

\subsection{Scales of Banach spaces with smoothing operators}
\label{subsec:scales}

Let $a_0 \in \R$, 
and let $\mI \subseteq [a_0,\infty)$ be a set of real numbers 
with $a_0 = \min \mI$. 
Let $(E_a, \| \, \|_a)$, $a \in \mI$, 
be a decreasing family of (real or complex) Banach spaces 
with continuous inclusions
\begin{equation} \label{0303.1}
E_b \hookrightarrow E_a, \quad \ 
\| u \|_a \leq \| u \|_b \quad \text{for} \  a,b \in \mI, \ a \leq b.	
\end{equation}
Set 
\[
E_\infty := \bigcap_{a \in \mI} E_a.
\]
Since $E_\infty$ is the usual notation for the intersection of the spaces of the scale,
we write $E_\infty$ even in the case in which $\sup \mI < \infty$.

We assume that the scale $(E_a)$ is equipped with smoothing opertors, 
namely we assume that there exists a family $(S_\theta)$
with continuous parameter $\theta \in [1,\infty)$ of linear operators
\begin{equation} \label{S.generic}
S_\theta : E_{a_0} \to E_\infty 
\end{equation}
satisfying for all $\theta \geq 1$ 
the following two basic smoothing properties:
\begin{alignat}{2} \label{S1} 
\| S_\theta u \|_b 
& \leq A_{ab} \, \theta^{b-a} \| u \|_a 
& \ \quad & \text{if} \ a \leq b, \ \ a,b \in \mI,
\\
\label{S2} 
\| u - S_\theta u \|_a 
& \leq B_{ab} \, \theta^{-(b-a)} \| u \|_b 
&& \text{if} \ a \leq b, \ \ a,b \in \mI, 
\end{alignat}
for some constants $A_{ab}, B_{ab}$ that are bounded 
for $a,b$ in bounded subsets of $\mI$.
In particular, 
\[
\| S_\theta u \|_a \leq A_{aa} \| u \|_a, 
\quad \ 
\| (I - S_\theta) u \|_a \leq B_{aa} \| u \|_a.
\]
From \eqref{S1}-\eqref{S2} one obtains the logarithmic convexity of the norms,
namely the interpolation inequality
\begin{equation} \label{interpolazione}
\| u \|_b \leq C \| u \|_a^{\frac{c-b}{c-a}} \| u \|_c^{\frac{b-a}{c-a}} 
\quad \ \forall a,b,c \in \mI, \ \ a \leq b \leq c,
\quad \ C = 2 A_{ab}^{\frac{c-b}{c-a}} B_{bc}^{\frac{b-a}{c-a}}.
\end{equation}
To prove \eqref{interpolazione}, split $u = S_\theta u + (u - S_\theta u)$,
use \eqref{S1} to estimate $\| S_\theta u \|_b$ 
and \eqref{S2} to estimate $\| u - S_\theta u \|_b$, 
then optimize the choice of $\theta$. 
From \eqref{S1}-\eqref{S2} we also obtain
\begin{equation} \label{0303.4}
\| (S_{\theta_1} - S_{\theta_2}) u \|_b 
\leq C \max \{ \theta_1^{b-a}, \, \theta_2^{b-a} \} \, \| u \|_a
\quad \ \forall a,b \in \mI, \ \ \theta_1, \theta_2 \in [1,\infty),
\end{equation}
with 
\[
C = \max \{ B_{pq} \max \{ A_{pp}, A_{qq} \},  
A_{pq} \max \{ B_{pp}, B_{qq} \} \},
\quad 
p := \min \{ a,b \}, \ \ 
q := \max \{ a,b \}.  
\]
To prove \eqref{0303.4}, 
split $(S_{\theta_1} - S_{\theta_2}) u 
= S_{\theta_1} (I - S_{\theta_2}) u  
+ (S_{\theta_1} - I) S_{\theta_2} u$,
consider separately the two cases $a \leq b$ and $a > b$, 
and apply \eqref{S1}-\eqref{S2} to each term.  

Some examples of scales $(E_a)$ with smoothings $(S_\theta)$ 
are given in Section \ref{sec:list}.

\subsection{Main result}
\label{subsec:main result}

We begin with defining a more general version 
of the ``decreasing regularity'' norms \eqref{def dr CC}, 
based on the norms of the scale $(E_a)$. 

\begin{definition} \emph{(The space $\Lip(\rho,F,\sigma,\g, \delta)$).}
\label{def Lip sigma}
Let $a_0 \in \R$, $\mI \subseteq [a_0, \infty)$, $a_0 \in \mI$, 
and let $(E_a, \| \ \|_a)_{a \in \mI}$ 
be a scale of Banach spaces equipped with smoothing operators
as described in subsection \ref{subsec:scales}. 
Let 
\begin{equation} \label{param}
0 \leq k < \rho \leq k+1, \quad \ 
\gamma > 0, \quad \ 
\delta > 0,
\end{equation}
with $k$ integer, $\rho, \g, \delta$ real.
Let $F$ be a closed subset of $\R^n$, $n \geq 1$. 
Let $\sigma = \sigma_0, \ldots, \sigma_k, \sigma_\rho$ be real numbers, 
all belonging to $\mI$, 
with 
\begin{equation} \label{def sj}
\sigma_m := \sigma - m \delta 
\quad \forall m = 0, \ldots, k, 
\quad \ 
\sigma_\rho := \sigma - \rho \delta. 
\end{equation}
For every multi-index $j \in \N^n$ of length $|j| \leq k$, 
denote $\sigma_j := \sigma_{|j|} = \sigma - |j| \delta$. 
Consider a collection $f = \{ f^{(j)} : j \in \N^n, \ |j| \leq k \}$ 
of functions 
\[
f^{(j)} : F \to E_{\sigma_j}.
\]
For all $j \in \N^n$, $|j| \leq k$, 
all $x,y \in F$, let 
\begin{align} \label{def Pj Rj}
P_j(x,y) & := \sum_{\begin{subarray}{c} \ell \in \N^n \\ |j+\ell| \leq k \end{subarray}} 
\frac{1}{\ell!} f^{(j+\ell)}(y) (x-y)^\ell,
\quad \ 
R_j(x,y) := f^{(j)}(x) - P_j(x,y).
\end{align}
We say that the collection $f$ belongs to $\Lip(\rho, F, \sigma, \g, \d)$ 
if there exists a constant $M \geq 0$ such that 
\begin{equation}
\label{0903.1-2}
\g^{|j|} \| f^{(j)}(x) \|_{\sigma_j} \leq M,  
\quad \ 
\g^\rho \| R_j(x,y) \|_{\sigma_\rho} \leq M |x-y|^{\rho-|j|},
\quad \ 
\forall x,y \in F, \ \  |j| \leq k.
\end{equation}
The norm of $f$ is defined as 
\begin{equation} \label{0903.8}
\| f \|_{\Lip(\rho, F, \sigma, \g, \delta)} 
= \inf \{ M \geq 0 : (\ref{0903.1-2}) \text{ holds} \}.
\end{equation}
\end{definition}

Before stating the main result of the paper 
(which is Theorem \ref{thm:WET sigma} below), 
in the next proposition we observe that 
the family of spaces $\Lip(\rho,F,\sigma,\gamma,\delta)$ 
(where $\sigma$ is the only varying parameter) 
inherits the structure of scale of Banach spaces with smoothing operators 
from the scale $(E_a)$. 
The proof is elementary.

\begin{prop}
\label{prop:about.Lip}
\emph{(Inherited structure of the spaces $\Lip(\rho,F,\sigma,\gamma,\delta)$).}

$(i)$ Each space $\Lip(\rho,F,\sigma,\gamma,\delta)$ 
defined in Definition \ref{def Lip sigma} is a Banach space. 

$(ii)$ Let $(E_a)_{a \in \mI}$ be a scale of Banach spaces 
with smoothing operators $(S_\theta)_{\theta \geq 1}$ as described in subsection 
\ref{subsec:scales}. Let $k,\rho,\delta$ be like in Definition \ref{def Lip sigma}, 
and let $\mI' \subset \mI$ be the set of the ``admissible indices'',
namely the set of all $\s \in \mI$ 
such that $\s_0 = \s, \ldots, \s_k, \s_\rho$ (defined in Definition \ref{def Lip sigma})
all belong to $\mI$, so that $\Lip(\rho,F,\sigma,\gamma,\delta)$ is well defined 
for all $\sigma \in \mI'$. 
For $\sigma \in \mI'$, denote $X_\sigma := \Lip(\rho,F,\sigma,\gamma,\delta)$, 
and $X_\infty := \cap_{\sigma \in \mI'} X_\sigma$.
For $\sigma \in \mI'$, $\theta \geq 1$, 
 $f = \{ f^{(j)} : |j| \leq k \} \in X_\sigma$, 
define the collection 
$S_\theta f = \{ (S_\theta f)^{(j)} : |j| \leq k \}$ 
by setting $(S_\theta f)^{(j)}(x) := S_\theta[ f^{(j)}(x)]$ 
for all $x \in F$.  
Fix a number $b_0 \in \mI'$, 
and define $\mI'_0 := \mI' \cap [b_0, \infty)$. 
Then $(X_\sigma)_{\sigma \in \mI'_0}$ with $(S_\theta)_{\theta \geq 1}$ 
is a scale of Banach spaces with smoothing operators 
satisfying the properties described in subsection \ref{subsec:scales}.
\end{prop}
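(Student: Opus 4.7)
The plan is a componentwise reduction to the scale $(E_a)_{a\in\mI}$; no genuinely new phenomena appear. For part (i), I would first verify the norm axioms. Homogeneity and the triangle inequality follow because the Taylor remainder map $\{f^{(j)}\}\mapsto R_j$ in \eqref{def Pj Rj} is linear in $f$, and each ``slot'' $\|\cdot\|_{\sigma_j}$, $\|\cdot\|_{\sigma_\rho}$ is itself a norm on $E_{\sigma_j}$, $E_{\sigma_\rho}$. Definiteness comes from looking first at $j=0$ to force $f^{(0)}\equiv 0$ and then using the Taylor identity inductively in $|j|$ to force all $f^{(j)}\equiv 0$. For completeness, given a Cauchy sequence $(f_n)$, the bound $\gamma^{|j|}\|f_n^{(j)}(x)-f_m^{(j)}(x)\|_{\sigma_j}\leq\|f_n-f_m\|_{X_\sigma}$ makes each sequence $(f_n^{(j)}(x))_n$ Cauchy in the Banach space $E_{\sigma_j}$; call its limit $f^{(j)}(x)$. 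Passing to the limit $n\to\infty$ in the two bounds of \eqref{0903.1-2} (and afterwards in the corresponding Cauchy estimate between $f_n$ and $f_m$) yields $f\in X_\sigma$ and $f_n\to f$ in $X_\sigma$.

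For part (ii), the inclusion $X_\sigma\hookrightarrow X_\tau$ with constant $1$ for $\tau\leq\sigma$ in $\mI'_0$ is immediate: since $\tau_j\leq\sigma_j$ and $\tau_\rho\leq\sigma_\rho$, property \eqref{0303.1} of the original scale gives $\|\cdot\|_{\tau_j}\leq\|\cdot\|_{\sigma_j}$ and $\|\cdot\|_{\tau_\rho}\leq\|\cdot\|_{\sigma_\rho}$ on the relevant spaces, so every collection satisfying \eqref{0903.1-2} with parameter $\sigma$ and constant $M$ satisfies it with $\tau$ and the same $M$.

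The key observation for the smoothings is that the operators defined in the statement commute with the Taylor construction: by linearity of $S_\theta$ and of the expression in \eqref{def Pj Rj}, $(S_\theta f)^{(j)}(x)=S_\theta[f^{(j)}(x)]$ and $R_j^{S_\theta f}(x,y)=S_\theta[R_j^f(x,y)]$. For $a\leq b$ in $\mI'_0$, applying \eqref{S1} slot by slot (noting $b_j-a_j=b-a$ and $b_\rho-a_\rho=b-a$) gives
\begin{align*}
\gamma^{|j|}\|(S_\theta f)^{(j)}(x)\|_{b_j} & \leq A_{a_j,b_j}\,\theta^{b-a}\,\gamma^{|j|}\|f^{(j)}(x)\|_{a_j}, \\
\gamma^\rho\|R_j^{S_\theta f}(x,y)\|_{b_\rho} & \leq A_{a_\rho,b_\rho}\,\theta^{b-a}\,\gamma^\rho\|R_j^f(x,y)\|_{a_\rho},
\end{align*}
which yields \eqref{S1} for $(X_\sigma)$ with $A'_{ab}:=\max\{A_{a_j,b_j},\,A_{a_\rho,b_\rho}:|j|\leq k\}$. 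Property \eqref{S2} is derived identically. The mapping $S_\theta:X_{b_0}\to X_\infty$ is the special case $a=b_0$, $b=\tau$ for arbitrary $\tau\in\mI'_0$. I do not expect any real obstacle; the only detail to monitor is that the new constants $A'_{ab}, B'_{ab}$ remain bounded when $a,b$ vary in bounded subsets of $\mI'_0$, which follows because the finite family $\{a_j,b_j,a_\rho,b_\rho:|j|\leq k\}$ then stays in a bounded subset of $\mI$, where the original $A_{pq},B_{pq}$ are bounded by hypothesis.
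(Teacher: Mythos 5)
Your proof is correct and follows essentially the same route as the paper's: completeness is obtained by the pointwise limit argument in each Banach space $E_{\sigma_j}$ followed by passage to the limit in the defining inequalities \eqref{0903.1-2}, and part (ii) by the slot-by-slot reduction to \eqref{S1}--\eqref{S2} for $(E_a)$ via the identity $R_j(x,y;S_\theta f)=S_\theta[R_j(x,y;f)]$. One small remark: definiteness requires no induction through Taylor identities (which would in any case not work on a general closed set $F$, where $f^{(0)}$ does not determine the higher $f^{(j)}$); it is immediate because the norm directly bounds $\gamma^{|j|}\|f^{(j)}(x)\|_{\sigma_j}$ for every $|j|\le k$.
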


\begin{proof}
The proof of $(i)$ is the standard pointwise/uniform limit argument: 
let $f_1, f_2, \ldots$
be a Cauchy sequence of elements of $X_\sigma := \Lip(\rho,F,\sigma,\gamma,\delta)$,  
with $f_i = \{ f_i^{(j)} : |j| \leq k \}$. 
For all $x,y \in F$, $|j| \leq k$, all $i, i'$, 
the collection $f_i - f_{i'}$ satisfies
\[
(f_i - f_{i'})^{(j)}(x) = f_i^{(j)}(x) - f_{i'}^{(j)}(x), 
\quad \ 
R_j(x,y;f_i-f_{i'}) = R_j(x,y;f_i) - R_j(x,y;f_{i'}),
\]
where $R_j(x,y;f_i)$ is the Taylor remainder of the function $f_i^{(j)}$
as defined in \eqref{def Pj Rj}, and similarly for $R_j(x,y;f_{i'})$.  
Hence for all $x,y \in F$, $|j| \leq k$, all $i, i'$,
one has 
\begin{align} \label{2109.fj}
\g^{|j|} \| f_i^{(j)}(x) - f_{i'}^{(j)}(x) \|_{\s_j} 
& \leq \| f_i - f_{i'} \|_{X_\s}, 
\\ 
\g^\rho \| R_j(x,y;f_i) - R_j(x,y;f_{i'}) \|_{\s_\rho} 
& \leq \| f_i - f_{i'} \|_{X_\s} |x-y|^{\rho-|j|}. 
\label{2109.Rj}
\end{align}
From \eqref{2109.fj}, for every $x \in F$, $|j| \leq k$, 
the sequence $(f_i^{(j)}(x))_{i = 1,\ldots}$ 
is Cauchy in $E_{\s_j}$, therefore it converges to some limit in $E_{\s_j}$, 
which we denote by $f^{(j)}(x)$. Let $\e > 0$, and take $i_0 \in \N$ such that 
$\| f_i - f_{i'} \|_{X_\s} \leq \e$ for all $i,i' \geq i_0$. 
For any $i \geq i_0$, passing to the limit as $i' \to \infty$
in \eqref{2109.fj} and in \eqref{2109.Rj}, we find that 
\[
\g^{|j|} \| f_i^{(j)}(x) - f^{(j)}(x) \|_{\s_j} 
\leq \e, 
\quad \ 
\g^\rho \| R_j(x,y;f_i) - R_j(x,y;f) \|_{\s_\rho} 
\leq \e |x-y|^{\rho-|j|} 
\]
for all $x,y \in F$, $|j| \leq k$, all $i \geq i_0$. 
This implies that $f \in X_\s$ 
with $\| f_i - f \|_{X_\s} \leq \e$ for $i \geq i_0$, 
and this proves that $f_i \to f$ in $X_\s$ as $i \to \infty$. 

The proof of $(ii)$ is straightforward, as each property \eqref{0303.1}, 
\ldots, \eqref{S2} for $(X_\s)$ is deduced from the corresponding property 
for $(E_a)$, using the linearity of the smoothing operators $S_\theta$ 
and, in particular, the identity 
\[
R_j(x,y ; S_\theta f) = S_\theta[ R_j(x,y;f) ].
\]
Properties \eqref{interpolazione} (interpolation) 
and \eqref{0303.4} follow from \eqref{S1}, \eqref{S2},
as explained in subsection \ref{subsec:scales}.
\end{proof}

The following theorem is the main result of this paper.

\begin{theorem} \label{thm:WET sigma}
\emph{(Extension theorem for spaces $\Lip(\rho,F,\sigma,\gamma,\delta)$).}
Assume the hypotheses of Definition \eqref{def Lip sigma}. 
There exists a linear operator 
\[
\mE_{k} : \Lip(\rho,F,\sigma,\g,\d) \to \Lip(\rho,\R^n,\sigma,\g,\d)
\]
such that, 
given $f = \{ f^{(j)} : |j| \leq k \} \in \Lip(\rho,F,\sigma,\g,\d)$,  
the collection $g := \mE_k f = \{ g^{(j)} : |j| \leq k \}
\in \Lip(\rho,\R^n,\sigma,\g,\d)$ is an extension of $f$, namely 
\[
g^{(j)} (x) = f^{(j)}(x) 
\quad \ \forall x \in F, \ |j| \leq k,
\]
with norm
\begin{equation} \label{est.ext}
\| g \|_{\Lip(\rho,\R^n,\sigma,\g,\d)}
\leq C \| f \|_{\Lip(\rho,F,\sigma,\g,\d)},
\end{equation}
where $C = C' K_0 K$,
\begin{equation} \label{def K}
\begin{aligned} 
K_0 & := \max \{ 1, A_{aa}, B_{aa} : a \in \{ \sigma_0, \ldots, \sigma_k, \sigma_\rho\} \},
\\
K & := \max \{ 1, A_{ab}, B_{ab} : a,b \in \{ \sigma_0, \ldots, \sigma_k, \sigma_\rho\}, 
\ a \leq b \},
\end{aligned}
\end{equation}
$A_{ab}, B_{ab}$ are the constants in \eqref{S1}-\eqref{S2},
and $C'$ is a constant depending only on $k,n$; 
in particular, $C'$ is independent of $f, \rho, F,\sigma,\g,\d$.

The function $g^{(0)} : \R^n \to E_{\s_\rho}$ is differentiable $k$ times
in every point of $\R^n$, with partial derivatives
\[
\pa_x^j g^{(0)}(x) = g^{(j)}(x) \in E_{\s_j} \subseteq E_{\s_\rho}
\quad \ \forall x \in \R^n, \ |j| \leq k.
\]
In addition, on the open set $\Om := \R^n \setminus F$ one has 
\[
g^{(j)} (x) \in E_\infty 
\quad \ \forall x \in \Om, \ |j| \leq k,
\]
and for every $a \in \mI$ the function $g^{(0)}$ 
(more precisely, its restriction to $\Om$)
is of class $C^\infty(\Om, E_a)$.

The operator $\mE_k$ depends on $k, F, \g, \d$ 
and on the family $(S_\theta)_{\theta \geq 1}$ 
of smoothing operators, 
and it is independent of $\rho,\sigma$. 
\end{theorem}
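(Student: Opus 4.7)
My plan is to follow the classical scheme of Stein's proof, but to inject a smoothing operator, with cube-dependent parameter, into the Taylor polynomial that defines the extension on each Whitney cube.

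\medskip

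\textbf{Setup.} I will invoke the standard Whitney decomposition of $\Om := \R^n \setminus F$ into dyadic cubes $\{Q_i\}$ whose diameters are comparable to $\dist(Q_i,F)$, together with a smooth partition of unity $(\ph_i^*)$ subordinate to slight enlargements of the $Q_i$ (Proposition \ref{prop:partition of unity} referenced in the paper). For each $i$, choose $p_i \in F$ with $\dist(Q_i,p_i) \lesssim \diam(Q_i)$, and associate a smoothing parameter $\theta_i \geq 1$ given as the appropriate negative power of $\diam(Q_i)$ so that the ``cost'' $\theta_i^{\s_b - \s_a} = \theta_i^{(a-b)\d}$ of passing from $E_{\s_a}$ to $E_{\s_b}$ via \eqref{S1} is balanced against the geometric factor $|x - p_i|^{|b|-|a|}$ arising from the Taylor terms (the normalization \eqref{def theta} in the paper).

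\medskip

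\textbf{Definition of the extension.} On $\Om$ I set
\[
g^{(0)}(x) := \sum_i \ph_i^*(x) \sum_{|\ell| \leq k} \frac{(x-p_i)^\ell}{\ell!}\, S_{\theta_i}\bigl[ f^{(\ell)}(p_i) \bigr],
\]
and on $F$, $g^{(j)} := f^{(j)}$. Since $S_{\theta_i}$ maps into $E_\infty$ and the sum is locally finite on $\Om$, $g^{(0)}$ is in $C^\infty(\Om, E_a)$ for every $a \in \mI$; the candidate Whitney derivatives on $\Om$ are obtained by differentiating termwise, so each $g^{(j)}(x)$ is a finite linear combination of the $E_\infty$-valued quantities $(\pa^\nu \ph_i^*)(x)(x-p_i)^\mu\, S_{\theta_i}[f^{(\mu+\ell)}(p_i)]$. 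Linearity of $\mE_k := (f \mapsto g)$ and its independence of $\rho,\s$ are immediate from the definition.

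\medskip

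\textbf{Core estimates.} The bounds \eqref{0903.1-2} for $g$ split into three cases: $x,y \in F$ (trivial); $x,y \in \Om$; and the mixed case $x \in \Om$, $y \in F$. Following Stein, one uses $\sum_i \ph_i^*(x) = 1$ on $\Om$ and the identity
\[
g^{(j)}(x) - P_j(x,y;f) = \sum_i \ph_i^*(x)\, \bigl[\text{terms involving } S_{\theta_i} f^{(\ell)}(p_i) - f^{(\ell)}(y)\bigr],
\]
plus products with $\pa^\nu \ph_i^*$, to reduce the remainder to differences of the form $S_{\theta_i}[ f^{(\ell)}(p_i) - P_\ell(p_i,y;f) ]$ and $(I - S_{\theta_i}) f^{(\ell)}(y)$. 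The hypothesis on $f$ gives $\g^\rho \|f^{(\ell)}(p_i) - P_\ell(p_i,y;f)\|_{\s_\rho} \leq M |p_i - y|^{\rho - |\ell|}$, and the smoothing axioms \eqref{S1}--\eqref{S2} convert the mismatch between $E_{\s_\ell}$ and the target space $E_{\s_j}$ or $E_{\s_\rho}$ into powers of $\theta_i$; with the calibration $\theta_i \sim \diam(Q_i)^{-1/\d}$, these powers merge with the Whitney geometric estimates $|x - p_i| \lesssim |x-y|$, $|p_i - y| \lesssim |x-y|$ to produce the required $|x-y|^{\rho - |j|}$ decay and the pointwise bound $\g^{|j|} \| g^{(j)}(x)\|_{\s_j} \lesssim M$. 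Passing to the limit $x \to F$ in these uniform estimates gives both the continuity of $g^{(j)}$ up to $F$ and the Whitney differentiability $\pa^j g^{(0)} = g^{(j)}$ on all of $\R^n$, exactly as in the classical argument.

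\medskip

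\textbf{Main obstacle.} The delicate point, foreshadowed by Remark \ref{rem:two decomp}, is that when estimating the remainder one must compare $S_{\theta_i}[f^{(\ell)}(p_i)]$ with $S_{\theta_{i'}}[f^{(\ell)}(p_{i'})]$ (or with $f^{(\ell)}(y)$ on $F$) across neighboring cubes with different smoothing parameters. A naive telescoping produces terms like $(S_{\theta_i} - S_{\theta_{i'}}) f^{(\ell)}$, which by \eqref{0303.4} are controlled by $\max(\theta_i,\theta_{i'})^{\s_\rho - \s_\ell}$; this overshoots the size needed when $|x - y|$ is large compared with $\diam(Q_i)$. The fix is to insert an auxiliary smoothing $S_\theta$ with $\theta$ tied to $|x-y|$ (rather than to any single cube), splitting $S_{\theta_i} - S_{\theta_{i'}} = (S_{\theta_i} - S_\theta) - (S_{\theta_{i'}} - S_\theta)$ and then using \eqref{0303.4} on each piece so that powers of $\theta$ and $\theta_i$ combine correctly with the Taylor decay. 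Once this two-parameter bookkeeping is in place, the remaining geometric estimates are identical to those of Stein's proof.
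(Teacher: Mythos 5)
Your construction and overall scheme are exactly the paper's: the extension is the Whitney sum with $S_{\theta_i}$ inserted, $\theta_i=q_i^{-1/\d}$, and the remainder estimate for $x\in\Om$, $y\in F$ in the $\sigma_\rho$-norm goes through as you describe. Two points in your plan, however, would fail as written. First, your diagnosis of where the auxiliary smoothing is needed is misplaced. No cross-cube comparison $(S_{\theta_i}-S_{\theta_{i'}})f^{(\ell)}$ ever has to be made: the sums over $i$ are absorbed by the identities $\sum_i\ph_i^*=1$ and $\sum_i\pa_x^\ell\ph_i^*=0$, and for the cubes whose expanded versions contain $x$ one has $\diam(Q_i)\sim\dist(x,F)$, so all the smoothing parameters in play are comparable. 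The genuine obstruction (Remark \ref{rem:two decomp}) is in proving the pointwise bound $\|g^{(j)}(x)\|_{\sigma_j}\lesssim M$ in the \emph{higher} norm $E_{\sigma_j}$: the terms $(I-S_{\theta_i})f^{(m+\ell)}(y)$ with $|m|+|\ell|>|j|$ produced by the natural decomposition need not belong to $E_{\sigma_j}$ at all, since $\sigma_{m+\ell}<\sigma_j$ and \eqref{S2} goes the wrong way; estimating $g^{(j)}$ without subtracting a Taylor polynomial instead loses the smallness needed against the factors $q_i^{|m|-|j|}$ from $\pa_x^{j-m}\ph_i^*$. The cure is the second decomposition of Lemma \ref{lemma:trick theta x}, with the term $W_{jm}^H$ built from $S_{\theta_i}-S_{\theta_x}$, $\theta_x=(6/\dist(x,F))^{1/\d}$, controlled by \eqref{0303.4}. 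Your proposed device (an auxiliary $S_\theta$ tied to the points rather than to a cube, split and estimated via \eqref{0303.4}) is the right one, but it must be deployed in the $\sigma_j$-estimate of $g^{(j)}$ itself, not in the $\sigma_\rho$-estimate of the remainder, where it is not needed.

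Second, the theorem requires $C'$ to be independent of $\g$, and your plan does not secure this. If you carry $\g$ through the estimates while keeping the cube decomposition of $\Om$, the cutoff $q_i\le1$, and $\theta_i=q_i^{-1/\d}$ fixed, the powers do not cancel: e.g.\ in the bound for $\g^{|j|}\|g^{(j)}(x)\|_{\sigma_j}$ one is left with factors $\g^{|j|-|m|-|\ell|}q_i^{|\ell|+|m|-|j|}$, which are not uniformly bounded in $\g$ because the definition of $g$ contains an absolute length scale. The paper proves the case $\g=1$ first and then deduces the general case by the rescaling $\tilde F=\g^{-1}F$, $\tilde f^{(j)}(x)=\g^{|j|}f^{(j)}(\g x)$; you need this step (or an equivalent $\g$-dependent recalibration of the cubes and of $\theta_i$). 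Two smaller omissions: the sum defining $g^{(0)}$ must be restricted to cubes with $q_i\le1$ so that $\theta_i\ge1$ and $g$ vanishes far from $F$; and the remainder estimate for $x,y$ both in $\Om$ is not purely geometric --- it rests on a bound for the derivatives of $g^{(0)}$ of order $k+1$ by $(\dist(x,F))^{\rho-k-1}$ plus a mean-value argument along the segment, and that derivative bound again requires the smoothing decompositions.
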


\begin{remark} 
(\emph{The extension operator $\mE_k$ is independent of $\sigma$}).
\label{rem:indip.sigma}
Let $\s = \s_0, \ldots, \s_k, \s_\rho$ and 
$\s' = \s'_0, \ldots, \s'_k, \s'_\rho$ 
all belong to $\mI$, with $\s_m := \s - m \delta$, $\s'_m := \s'- m \delta$, 
and $\s > \s'$. 
Let $f \in \Lip(\rho, F, \sigma, \g, \delta)$. 
Then, by \eqref{0303.1}, $f$ also belongs to $\Lip(\rho,F,\sigma',\g,\delta)$. 
Thus, in principle, $f$ has an extension $g:= \mE_k^{(\s)} f$ 
because $f \in \Lip(\rho, F, \sigma, \g, \delta)$, 
and also an extension $g':= \mE_k^{(\s')} f$ 
because $f \in \Lip(\rho, F, \sigma', \g, \delta)$. 
The last sentence of Theorem \ref{thm:WET sigma} 
says that $g(x) = g'(x)$ for all $x \in \R^n$. 
\end{remark}

\begin{remark} 
(\emph{Constants $K_0, K$}).
\label{rem:KK0} 
When $(E_a)$ is given by $L^2$-based Sobolev spaces $H^s(\R^d)$ 
or $H^s(\T^d)$, the constants in \eqref{def K} 
are $K_0 = K = 1$: see Examples \ref{ex:1}, \ref{ex:3}.
\end{remark}

\begin{remark} 
(\emph{Fixed Banach space $Y$ as a special case}).
\label{rem:trivial} 
It is natural to expect that Theorem \ref{thm:WET sigma} 
includes the case of a fixed Banach space $Y$ described in subsection 
\ref{subsec:fixed.Banach.space} as a special case. 
On the other hand, $\delta = 0$ is not allowed by the assumption \eqref{param}, 
and this, at a first glance, could seem to be an obstacle. 
However, the obstruction is fictitious: 
given $Y$, we define 
a trivial scale of Banach spaces with smoothing operators 
by setting $\mI = [0,\infty)$, 
$(E_a, \| \ \|_a) = (Y, \| \ \|_Y)$ 
for all $a \geq 0$, 
and $S_\theta = I$ (the identity map) for all $\theta \geq 1$. 
The properties \eqref{0303.1}, \ldots, \eqref{S2} are then trivially satisfied,
and Theorem \ref{thm:WET sigma} applies with, say, $\delta = 1$. 
\end{remark}

Theorem \ref{thm:WET sigma} is proved in section \ref{sec:proof}.

\subsection{Some consequence} 
\label{subsec:corollary}

In this subsection we discuss some direct consequence of Theorem \ref{thm:WET sigma},  
along the line of the observations in Appendix B of \cite{BBHM}: 
using the extension operator, 
one proves that some properties of differentiable functions defined on $\R^n$ 
also hold for collections of functions defined on a closed subset $F \subset \R^n$.
With respect to \cite{BBHM}, 
where the spaces are those of Definition \ref{def Lip Y} 
(and Definition \ref{def Lip Y gamma} below)
with a fixed Banach space $Y$ as a codomain,
the novelty of the present discussion 
is the use of ``decreasing regularity'' norms.
With respect to \cite{BM-Euler}, 
where ``decreasing regularity'' norms are used 
for differentiable functions defined on $\R^n$,
the novelty of the present discussion is that 
we also deal with functions defined on a closed set $F \subset \R^n$,
differentiable in the sense of Whitney 
(i.e., in the sense of Definition \ref{def Lip sigma}).

We begin with the elementary observation (Lemma \ref{lemma:lippi.lippi})
that the spaces $\Lip(\rho,F,\sigma,\g,\delta)$ of Definition \ref{def Lip sigma} 
are contained, with continuous inclusion, in the spaces 
$\Lip(\rho,F,Y)$ of Definition \ref{def Lip Y}, 
or, more precisely, in their version $\Lip(\rho,F,Y,\gamma)$ 
with the weight $\g$, which we now define 
(the only difference between 
Definitions \ref{def Lip Y gamma} and \ref{def Lip Y} 
is the presence of $\gamma$).

\begin{definition}
\label{def Lip Y gamma}
\emph{(The space $\Lip(\rho,F,Y,\gamma)$)}.
Let $F \subseteq \R^n$ be a closed set, 
$k \geq 0$ an integer, 
$k < \rho \leq k+1$, 
and let $Y$ be a (real or complex) Banach space.  
We say that a collection 
$\{ f^{(j)} : j \in \N^n, \ 0 \leq |j| \leq k \}$ 
of functions $f^{(j)} : F \to Y$ 
belongs to $\Lip(\rho,F,Y,\gamma)$ 
if there exists a constant $M \geq 0$ 
such that the functions 
$f^{(j)}$ and the remainders $R_j$ 
defined by Taylor's formula \eqref{2502.1} satisfy
\begin{equation}
\label{2502.Y.gamma}
\g^{|j|} \| f^{(j)}(x) \|_Y \leq M, 
\quad \  
\g^{\rho} \| R_j(x,y) \|_Y \leq M |x-y|^{\rhost-|j|},
\quad \ 
\forall x,y \in F, \ |j| \leq k.
\end{equation}
The norm of an element 
$f = \{ f^{(j)} : 0 \leq |j| \leq k \}$ 
of the space $\Lip(\rhost,F,Y,\gamma)$ 
is defined as 
\begin{equation} \label{0203.1.Y.gamma}
\| f \|_{\Lip(\rho,F,Y,\gamma)} 
= \inf \{ M \geq 0 : \eqreff{2502.Y.gamma} \text{ holds}\, \}.
\end{equation}
\end{definition}

Clearly $\Lip(\rho,F,Y)$ and $\Lip(\rho,F,Y,\gamma)$ coincide as sets;  
the difference is only in the definition of the norms (with or without $\gamma$). 

\begin{lemma} \label{lemma:lippi.lippi}
\emph{($\Lip(\rho,F,\sigma,\gamma, \delta)$ is a subspace of $\Lip(\rho,F,Y,\gamma)$).}
Let $(E_a)_{a \in \mI}$, $F, \rho, k, \sigma, \g, \delta$ be as 
in Definition \ref{def Lip sigma}, 
and let $Y := E_{\s_\rho}$. Then 
\begin{equation} \label{Y leq sigma}
\Lip(\rho,F,\sigma,\gamma,\delta) \subset \Lip(\rho,F,Y,\gamma), 
\quad \ 
\| f \|_{\Lip(\rho,F,Y,\gamma)} 
\leq \| f \|_{\Lip(\rho,F,\sigma,\gamma,\delta)} 
\end{equation}
and, more precisely,
\begin{equation} \label{lippi.lippi}
\| f \|_{\Lip(\rho,F,\sigma,\gamma,\delta)} 
= \max \big\{ \| f \|_{\Lip(\rho,F,Y,\gamma)} \,, \ 
\sup_{x \in F} \g^{|j|} \| f^{(j)}(x) \|_{\s_j} , 
\ |j| \leq k \big\}
\end{equation}
for all $f = \{ f^{(j)} : |j| \leq k \} \in \Lip(\rho,F,\sigma,\gamma,\delta)$.
\end{lemma}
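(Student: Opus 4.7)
The plan is to unfold both norms as explicit suprema and then compare them term by term, exploiting the fact that $Y = E_{\sigma_\rho}$ is precisely the weakest space among $E_{\sigma_0}, \ldots, E_{\sigma_k}, E_{\sigma_\rho}$.

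The first step is to record the elementary monotonicity consequence of \eqref{0303.1}: since $|j| \leq k < \rho$, one has $\sigma_\rho = \sigma - \rho \delta < \sigma - |j| \delta = \sigma_j$, hence $E_{\sigma_j} \hookrightarrow E_{\sigma_\rho} = Y$ with $\| u \|_Y \leq \| u \|_{\sigma_j}$ for every $u \in E_{\sigma_j}$. In particular the Taylor remainders $R_j(x,y)$ defined in \eqref{def Pj Rj} satisfy $\| R_j(x,y) \|_Y = \| R_j(x,y) \|_{\sigma_\rho}$, because both $f^{(j)}(x)$ and the polynomial $P_j(x,y)$ already take values in $Y$.

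The second step is to rewrite, as infima replaced by suprema, the two norms
\[
\| f \|_{\Lip(\rho,F,\sigma,\g,\d)}
= \max \Bigl\{ \sup_{|j|\leq k, \, x \in F} \g^{|j|} \| f^{(j)}(x) \|_{\s_j} \,, \
\sup_{|j| \leq k, \, x \neq y \in F} \g^\rho \frac{\| R_j(x,y) \|_{\s_\rho}}{|x-y|^{\rho-|j|}} \Bigr\},
\]
\[
\| f \|_{\Lip(\rho,F,Y,\g)}
= \max \Bigl\{ \sup_{|j|\leq k, \, x \in F} \g^{|j|} \| f^{(j)}(x) \|_{Y} \,, \
\sup_{|j| \leq k, \, x \neq y \in F} \g^\rho \frac{\| R_j(x,y) \|_{Y}}{|x-y|^{\rho-|j|}} \Bigr\}.
\]
By the observation above, the second terms inside the two $\max$'s are identical, while the first terms satisfy $\sup_x \g^{|j|} \| f^{(j)}(x) \|_Y \leq \sup_x \g^{|j|} \| f^{(j)}(x) \|_{\s_j}$. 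This immediately yields the inclusion and the norm inequality in \eqref{Y leq sigma}.

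Finally, to obtain \eqref{lippi.lippi}, it suffices to combine these two expressions: taking the max of $\| f \|_{\Lip(\rho,F,Y,\g)}$ with $\sup_{|j|\leq k, x \in F} \g^{|j|} \| f^{(j)}(x) \|_{\s_j}$ replaces the weaker bound $\g^{|j|} \| f^{(j)}(x) \|_Y$ by the stronger one $\g^{|j|} \| f^{(j)}(x) \|_{\s_j}$, while leaving the remainder supremum (already identical in the two norms) untouched. The resulting max coincides exactly with $\| f \|_{\Lip(\rho,F,\sigma,\g,\d)}$. No genuine obstacle is expected here; the only point requiring care is the correct identification $\| R_j(x,y) \|_Y = \| R_j(x,y) \|_{\sigma_\rho}$, which is why the choice $Y = E_{\sigma_\rho}$ (rather than $Y = E_{\sigma_0}$) is the right one to make the remainder parts of the two norms match.
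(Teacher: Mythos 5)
Your argument is correct and is essentially the paper's own proof, just written out in more detail: both rest on the single observation that $\| u \|_Y = \| u \|_{\sigma_\rho} \leq \| u \|_{\sigma_j}$ (by \eqref{0303.1} and $\sigma_\rho \leq \sigma_j$), so the remainder conditions in the two definitions coincide exactly while the pointwise conditions only strengthen when passing from $Y$ to $E_{\sigma_j}$. No gaps.
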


\begin{proof} It follows directly from Definitions \ref{def Lip sigma}, 
\ref{def Lip Y gamma} because, by \eqref{0303.1}, 
$\| f^{(j)}(x) \|_Y
= \| f^{(j)}(x) \|_{\s_\rho}
\leq \| f^{(j)}(x) \|_{\s_j}$ 
for all $x \in F$, all $|j| \leq k$. 
\end{proof}

Now consider the case $F = \R^n$ 
(here we follow the discussion on page 176 of \cite{S}).
Let $Y$ be a Banach space. 
If a collection $f = \{ f^{(j)} : |j| \leq k \}$ 
belongs to $\Lip(\rho,\R^n,Y,\gamma)$, 
then, for all $1 \leq |j| \leq k$ and all $x \in \R^n$, 
$f^{(j)}(x)$ is the partial derivative $\pa_x^j f^{(0)}(x)$, 
therefore the function $f^{(0)}$ alone 
determines the entire collection $f$. 
If, moreover, $\rho = k+1$, then the partial derivatives 
$f^{(j)}$ of $f^{(0)}$ of order $|j| = k$ are Lipschitz functions 
of $\R^n$ into $Y$. 
Suppose, in addition, that $Y$ is a Hilbert space
(or, more generally, that $Y$ has the Radon-Nikodym property).
Then, as is observed in \cite{BBHM} 
(referring to Chapter 5 of \cite{Ben-Lin}
to adapt the argument of page 176 of \cite{S}),
one applies a generalized version of Rademacher's Theorem
about the almost everywhere differentiability of Lipschitz functions, 
and obtains that, 
at almost every point of $\R^n$,
the function $f^{(0)} : \R^n \to Y$ admits partial derivatives of order $k+1$; 
moreover such partial derivatives $\pa_x^j f^{(0)}$, $|j| = k+1$, 
defined almost everywhere in $\R^n$, 
belong to $L^\infty(\R^n,Y)$, 
with bound 
\[
\g^{k+1} \| \pa_x^j f^{(0)} \|_{L^\infty(\R^n,Y)} 
\leq \| f \|_{\Lip(k+1,\R^n,Y,\g)}, 
\quad \ |j| = k+1.
\]
Thus to each element $f = \{ f^{(j)} : |j| \leq k \}$ of $\Lip(k+1,\R^n,Y,\gamma)$ 
corresponds a function, which is $f^{(0)}$, 
in the Sobolev space $W^{k+1,\infty}(\R^n,Y)$, 
with bound 
\begin{equation} \label{def norm WY}
\| f^{(0)} \|_{W^{k+1,\infty}(\R^n,Y,\gamma)} 
:= \max_{|j| \leq k+1} \, \g^{|j|} \| \pa_x^j f^{(0)} \|_{L^\infty(\R^n,Y)} 
\leq \| f \|_{\Lip(k+1,\R^n,Y,\gamma)}.
\end{equation}
It is also observed in \cite{BBHM}, 
following the discussion on page 176 
of \cite{S}, that the opposite correspondence also holds, 
namely that any function $f^{(0)} \in W^{k+1,\infty}(\R^n,Y)$,
together with its partial derivatives $f^{(j)} := \pa_x^j f^{(0)}$ 
of order $|j| \leq k$, 
gives a collection in $\Lip(k+1,\R^n,Y,\gamma)$, 
which satisfies 
\begin{equation} \label{frutta}
\| f \|_{\Lip(k+1,\R^n,Y,\gamma)} 
\leq C \| f^{(0)} \|_{W^{k+1,\infty}(\R^n,Y,\gamma)}, 
\end{equation}
for some $C \geq 1$ depending on $n,k$. 
Thus, identifying a collection $f = \{ f^{(j)} : |j| \leq k \}$ 
with its $0$-th function $f^{(0)}$, we can conclude that 
$\| \cdot \|_{W^{k+1,\infty}(\R^n,Y,\gamma)}$ 
(defined in \eqref{def norm WY})
and $\| \cdot \|_{\Lip(k+1,\R^n,Y,\gamma)}$ 
(defined in \eqref{0203.1.Y.gamma})
are equivalent norms on the space $\Lip(k+1,\R^n,Y,\gamma)$. 

\medskip

Now consider a scale $(E_a)_{a \in \mI}$ of Banach spaces with smoothing operators, 
as described in subsection \ref{subsec:scales}, 
and assume, in addition, that each $E_a$ is a Hilbert space 
(or, more generally, that $E_a$ has the Radon-Nikodym property). 
Let $f = \{ f^{(j)} : |j| \leq k \}$ be an element 
of the space $\Lip(k+1,\R^n,\sigma,\gamma, \delta)$ 
defined in Definition \ref{def Lip sigma}. 
By Lemma \ref{lemma:lippi.lippi}, $f$ also belongs to $\Lip(k+1,\R^n,Y,\gamma)$
where $Y := E_{\s_\rho}$, $\rho := k+1$, 
and therefore, as observed above, 
$f^{(0)} \in W^{k+1,\infty}(\R^n,Y)$. 
Moreover, by Definition \ref{def Lip sigma}, 
$f^{(j)}(x) \in E_{\s_j}$ for all $x \in \R^n$, all $|j| \leq k$, 
with 
\begin{equation} \label{ginepro.1}
\sup_{x \in \R^n} \g^{|j|} \| f^{(j)}(x) \|_{\s_j} 
\leq \| f \|_{\Lip(k+1,\R^n,\sigma,\gamma, \delta)}, 
\quad \ |j| \leq k.
\end{equation}
Define
\begin{equation} \label{ginepro.2}
\| f^{(0)} \|_{\sn} := \max \big\{ \| f^{(0)} \|_{W^{k+1,\infty}(\R^n,Y,\gamma)} \,, 
\g^{|j|} \| f^{(j)} \|_{L^\infty(\R^n,E_{\s_j})}, \  |j| \leq k \big\}, 
\end{equation}
where $Y := E_{\s_\rho}$,
$\rho := k+1$ (the letter $\sn$ in the index has no special meaning, it is just a short name).
From \eqref{def norm WY}, \eqref{Y leq sigma} (in which $\rho=k+1$ and $F=\R^n$)
and \eqref{ginepro.1} we deduce that
\begin{equation} \label{ginepro.3}
\| f^{(0)} \|_{\sn} 
\leq \| f \|_{\Lip(k+1,\R^n,\sigma,\gamma, \delta)}.
\end{equation}
By \eqref{lippi.lippi} (in which $\rho=k+1$ and $F=\R^n$), 
\eqref{frutta} and \eqref{ginepro.2} we obtain 
\begin{equation} \label{ginepro.4}
\| f \|_{\Lip(k+1,\R^n,\sigma,\gamma, \delta)}
\leq C \| f^{(0)} \|_{\sn}.
\end{equation}
Thus we have the following equivalence. 

\begin{lemma} \emph{(Equivalent norm on $\Lip(k+1,\R^n,\sigma,\gamma,\delta)$).} 
\label{lemma:equiv.Z}
Let $\rho = k+1$, $F = \R^n$, 
and consider the space $\Lip(k+1,\R^n,\sigma,\gamma,\delta)$ 
defined in Definition \ref{def Lip sigma}. 
For $f = \{ f^{(j)} : |j| \leq k \}$ in that space, define
\begin{equation} \label{ginepro.5}
\| f^{(0)} \|_{Z(k+1,\R^n,\sigma,\gamma,\delta)} 
:= \max_{|j| \leq k+1} \ \g^{|j|} \| \pa_x^j f^{(0)} \|_{L^\infty(\R^n,E_{\s_j})}
\end{equation}
(the existence almost everywhere of the partial derivatives $\pa_x^j f^{(0)}$ 
of order $|j| = k+1$ 
and the fact that they belong to $L^\infty$ 
are discussed above).
Then 

$(i)$ $\| f^{(0)} \|_{Z(k+1,\R^n,\sigma,\gamma,\delta)}$ 
defined in \eqref{ginepro.5} 
and $\| f^{(0)} \|_{\sn}$ 
defined in \eqref{ginepro.2} 
coincide; 

$(ii)$ $\| f^{(0)} \|_{Z(k+1,\R^n,\sigma,\gamma,\delta)}$ 
defined in \eqref{ginepro.5} 
and $\| f \|_{\Lip(k+1,\R^n,\sigma,\gamma, \delta)}$ 
defined in Definition \ref{def Lip sigma} 
are equivalent norms. 
\end{lemma}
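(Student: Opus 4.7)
The plan is to unpack both sides and observe that in part $(i)$ they are literally the same expression, after which part $(ii)$ will follow immediately from $(i)$ combined with the bounds \eqref{ginepro.3}--\eqref{ginepro.4} already established in the discussion preceding the lemma.

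For part $(i)$, I would first recall that since $F = \R^n$, the functions $f^{(j)}$ are the classical partial derivatives $\pa_x^j f^{(0)}$ for $|j| \leq k$; and by the Rademacher-type argument recalled just above the lemma, the partial derivatives of order $|j| = k+1$ of $f^{(0)}$ exist almost everywhere and lie in $L^\infty(\R^n, Y)$ with $Y = E_{\s_\rho}$. Expanding $\| f^{(0)} \|_{W^{k+1,\infty}(\R^n, Y, \gamma)}$ inside the definition \eqref{ginepro.2}, the quantity $\| f^{(0)} \|_{\sn}$ is the maximum, over $|j| \leq k$, of $\gamma^{|j|} \| \pa_x^j f^{(0)} \|_{L^\infty(\R^n, Y)}$ and $\gamma^{|j|} \| f^{(j)} \|_{L^\infty(\R^n, E_{\s_j})}$, together with the single top term $\gamma^{k+1} \| \pa_x^j f^{(0)} \|_{L^\infty(\R^n, Y)}$ for $|j| = k+1$. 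For each $|j| \leq k$, the scale inclusion \eqref{0303.1} (with $\s_j \geq \s_\rho$) gives $\| \cdot \|_Y = \| \cdot \|_{\s_\rho} \leq \| \cdot \|_{\s_j}$, so the first of these quantities is dominated by the second and drops out of the max; what remains is exactly
\[
\max_{|j| \leq k+1} \gamma^{|j|} \| \pa_x^j f^{(0)} \|_{L^\infty(\R^n, E_{\s_j})},
\]
i.e.\ $\| f^{(0)} \|_{Z(k+1,\R^n,\sigma,\gamma,\delta)}$, once one adopts the convention $\s_{k+1} := \s_\rho$ already implicit in \eqref{ginepro.5}.

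For part $(ii)$, having established $(i)$, the two inequalities \eqref{ginepro.3} and \eqref{ginepro.4} at once yield
\[
\| f^{(0)} \|_{Z(k+1,\R^n,\sigma,\gamma,\delta)}
\leq \| f \|_{\Lip(k+1,\R^n,\sigma,\gamma,\delta)}
\leq C \| f^{(0)} \|_{Z(k+1,\R^n,\sigma,\gamma,\delta)},
\]
which is the desired equivalence; no further work is needed.

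I do not foresee a real obstacle: the heart of the matter is the scale inclusion \eqref{0303.1}, which makes the $L^\infty(\R^n, Y)$-norms of the lower-order derivatives redundant in $\| f^{(0)} \|_{\sn}$ in the presence of the sharper $L^\infty(\R^n, E_{\s_j})$-norms. The only mildly delicate point is the bookkeeping with $\rho = k+1$, with $\s_\rho = \s - (k+1)\d$ playing the role of $\s_{k+1}$ in \eqref{ginepro.5}, and with $Y = E_{\s_\rho}$ throughout; once this is straightened out, the argument reduces to reading the two definitions side by side.
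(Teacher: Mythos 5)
Your proof is correct and follows essentially the same route as the paper: part $(i)$ is exactly the observation that $\| \cdot \|_Y = \| \cdot \|_{\s_\rho} \leq \| \cdot \|_{\s_j}$ makes the lower-order $Y$-norm terms in \eqref{ginepro.2} redundant (with $\s_{k+1} = \s - (k+1)\d = \s_\rho$ handling the top order), and part $(ii)$ is the immediate combination of $(i)$ with \eqref{ginepro.3}--\eqref{ginepro.4}. No issues.
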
 

\begin{proof} $(i)$ follows from the definitions 
\eqref{ginepro.5} and \eqref{def norm WY}, 
because $\| \pa_x^j f^{(0)}(x) \|_{Y} 
= \| \pa_x^j f^{(0)}(x) \|_{\s_\rho} 
\leq \| \pa_x^j f^{(0)}(x) \|_{\s_j}$. 
$(ii)$ follows from $(i)$, \eqref{ginepro.3}, 
 \eqref{ginepro.4}.
\end{proof}

The advantage given by Lemma \ref{lemma:equiv.Z} 
is in the fact that working with the norms \eqref{ginepro.5} 
is simpler than with those in Definition \ref{def Lip sigma},
because in \eqref{ginepro.5} there are only derivatives, 
and no Taylor's remainders to estimate. 

\medskip

Finally we come to functions defined on a closed set $F \subset \R^n$.
A natural way of using our extension theorem is this: 
\begin{itemize}
\item
by Theorem \ref{thm:WET sigma}, 
any element of $\Lip(k+1,F,\sigma,\gamma,\delta)$ 
has an extension in $\Lip(k+1,\R^n,\sigma,\gamma,\delta)$; 

\item
using the norms \eqref{ginepro.5}, where only the derivatives are involved, 
we prove estimates and properties for the extended functions; 

\item
then we consider the restriction to $F$ of the extended functions, 
so as to obtain similar estimates 
and properties for elements of $\Lip(k+1,F,\sigma,\gamma,\delta)$.  
\end{itemize}

This path is rather general, in the sense that it applies 
to $\Lip(k+1,F,\sigma,\gamma,\delta)$ 
provided that the spaces of the scale $(E_a)_{a \in \mI}$ 
are Hilbert spaces, or have the Radon-Nikodym property. 
Clearly the last step of the path relies on the trivial inequality
\begin{equation} \label{trivial.restriction}
\sup_{x \in F} \| f^{(j)}(x) \|_{\s_j} 
\leq \sup_{x \in \R^n} \| f^{(j)}(x) \|_{\s_j}, 
\end{equation}
which holds just because $F \subset \R^n$, on one side, 
and on \eqref{est.ext} on the other side. 
This is essentially how norms \eqref{0203.1.Y.gamma} 
with fixed codomain $Y$ are treated in \cite{BBHM}.

As an example, in Proposition \ref{prop:appl} below 
we state precisely some of the basic estimates 
for Sobolev functions in the periodic setting.

For any integer $d \geq 1$, for any real $s$, 
let $H^s(\T^d,\EE)$, $\EE = \R$ or $\C$,
be the Sobolev space of functions $u : \R^d \to \EE$ of $d$ real variables, 
$\th \mapsto u(\th) = u(\th_1, \ldots, \th_d)$,
periodic with period $2\pi$ in each argument $\th_i$, 
with finite Sobolev norm 
\[
\| u \|_{H^s} := \Big( \sum_{\xi \in \Z^d} 
|\hat u_\xi|^2 \langle \xi \rangle^{2s} \Big)^{\frac12},
\quad \ 
\langle \xi \rangle := (1 + |\xi|^2)^{\frac12},
\]
where $\hat u_\xi$ is the Fourier coefficient of $u$ 
of frequency $\xi = (\xi_1, \ldots, \xi_d) \in \Z^d$, and 
$|\xi|^2 = \xi_1^2 + \ldots + \xi_d^2$.
Let 
\begin{equation} \label{def Ea Sob}
E_a := H^a(\T^d,\EE), \quad \ 
\| u \|_a := \| u \|_{H^a}, \quad \  
\mI := [0,\infty), \quad \ 
a_0 := 0. 
\end{equation}
The family $(E_a)_{a \in \mI} = (H^s(\T^d,\EE))_{s \geq 0}$
is a scale of Banach spaces equipped with smoothing operators 
as described in subsection \ref{subsec:scales}
(see subsection \ref{subsec:Sobolev}); 
moreover each $E_a = H^a(\T^d,\EE)$ is a Hilbert space.
Let $n,k$ be integers, with $n \geq 1$, $k \geq 0$. 
Let $F$ be a closed subset of $\R^n$, and let $\g > 0$.
For $s \in [0,\infty)$, we consider the space 
$\Lip(k+1, F, s, \g, 1)$ defined in Definition \ref{def Lip sigma}; 
here $\rho := k+1$, $\delta := 1$. 
Its elements are then collections 
$f = \{ f^{(j)} : j \in \N^n, \ |j| \leq k \}$ 
of functions $f^{(j)}: F \to E_{s-|j|}$,  
where $E_{s-|j|} = H^{s-|j|}(\T^d,\EE)$. 
For $s \in [0,\infty)$, to shorten the notation, we denote
\begin{equation} \label{def XsF}
X_{s,F} := \Lip(k+1, F, s, \g, 1), 
\quad \ 
\| u \|_{X_{s,F}} := \| u \|_{\Lip(k+1, F, s, \g, 1)}.
\end{equation}
For elements $u = \{ u^{(j)} : |j| \leq k \}$ of $X_{s,F}$ 
we use the notation (like in subsection \ref{subsec:what.comp})
\[
u^{(j)}(x,\th) := u^{(j)}(x) (\th) 
\quad \ \forall x \in F \subseteq \R^n, \ \ 
\th \in \R^d.
\]

\begin{prop} \label{prop:appl} 
Consider the scale of Sobolev spaces defined in \eqref{def Ea Sob}. 
Let $n \geq 1$, $k \geq 0$ be integers,
$F \subseteq \R^n$ a closed set, 
$\g > 0$.
For any real $s \geq 0$, let $X_{s,F}$, $\| \cdot \|_{X_{s,F}}$ 
be defined in \eqref{def XsF}. 
Let $\mE_k$ be the extension operator given by Theorem \ref{thm:WET sigma}.
For any $u \in X_{s,F}$, let 
\begin{equation} \label{def tilde}
\tilde u := (\mE_k u)^{(0)}
\end{equation}
denote the $0$-th element of the collection 
$\mE_k u = \{ (\mE_k u)^{(j)} : |j| \leq k \}$. 

\smallskip

$(i)$ \emph{(Product)}. Let $s \geq s_0 > k + 1 + \frac{d}{2}$. 
For $u,v \in X_{s,F}$, define the pointwise product $uv$ as the collection 
$\{ (uv)^{(j)} : |j| \leq k \}$ 
where $(uv)^{(0)}$ is the restriction to $x \in F$ of the pointwise product 
$\tilde u \tilde v$, namely 
$(uv)^{(0)}(x,\th) = \tilde u(x,\th) \tilde v(x,\th)$ for all $x \in F$, all $\th \in \R^d$,
and $(uv)^{(j)}$ is the restriction to $x \in F$ of the $j$-th partial derivative
$\pa_x^j (\tilde u \tilde v)$ of $\tilde u \tilde v$. 
Then $uv$ belongs to $X_{s,F}$ and satisfies 
\[
\| uv \|_{X_{s,F}}
\leq C_s \| u \|_{X_{s,F}} \| v \|_{X_{s_0,F}} 
+ C_{s_0} \| u \|_{X_{s_0,F}} \| v \|_{X_{s,F}},
\]
where $C_s, C_{s_0}$ are positive constants, 
with $C_s$ depending on $n,k,s$, 
 $C_{s_0}$ depending on $n,k,s_0$,
and both independent of $\g,F$.

\smallskip

$(ii)$ \emph{(Inverse diffeomorphism close to the identity)}. 
Let $s \geq s_0 > d + k + 3$.
There exists a constant $\d_0 \in (0,1)$, depending on $n,k,s_0$, 
with the following property. 
Let $\a = (\a_1, \ldots, \a_d)$,  
with $\a_i \in X_{s_0,F}$, $\a_i(x,\th)$ real, and 
\[
\| \a \|_{X_{s_0,F}} := 
\max_{i = 1, \ldots, d} \| \a_i \|_{X_{s_0,F}} \, 
\leq \d_0.
\]
For each $x \in F$, let
\[
f(x) : \R^d \to \R^d, 
\quad \ 
\th \mapsto f(x)(\th) = f(x,\th) := \th + \a(x,\th).
\]
For $x \in \R^n$, let $\tilde f(x,\th) := \th + \tilde \a(x,\th)$, 
where $\tilde \a := (\tilde \a_1, \ldots, \tilde \a_d)$ 
is defined as in \eqref{def tilde}.  
Then $\tilde f(x)$ is a diffeomorphism of $\R^d$ 
and also a diffeomorphism of $\T^d$, 
for all $x \in \R^n$. 
For all $x \in \R^n$, the inverse diffeomorphism $\tilde f(x)^{-1}$ has the form 
$\tilde f(x)^{-1}(\th) = \th + \bar \b(x,\th)$, 
where $\bar \b := (\bar \b_1, \ldots, \bar\b_d)$ 
has the same periodicity as $\a$ as a function of $\th$
(we write $\bar \b$, and not $\tilde \b$, 
because ``tilde'' has the meaning defined in \eqref{def tilde}). 
For each $i=1, \ldots, d$, 
consider the collection $\b_i = \{ \b_i^{(j)} : |j| \leq k \}$ 
where $\b_i^{(0)}$ is the restriction to $x \in F$ of the function 
$\bar \b_i$, namely $\b_i^{(0)}(x,\th) = \bar \b_i(x,\th)$ 
for all $x \in F$, all $\th \in \R^d$, 
and $\b_i^{(j)}$ is the restriction to $x \in F$ 
of the $j$-th partial derivative $\pa_x^j \bar \b_i$ of $\bar \b_i$.
Then each $\b_i$ is real, it belongs to $X_{s_0,F}$, 
and $\b := (\b_1, \ldots, \b_d)$ satisfies 
\[
\| \b \|_{X_{s_0,F}} \leq 2 \| \a \|_{X_{s_0,F}} \leq 2 \d_0, 
\quad \ 
\| \b \|_{X_{s,F}} \leq C_s \| \a \|_{X_{s,F}},
\]
where $C_s$ depends on $n,k,s$. 
Clearly for all $x \in F$
the map $\R^d \to \R^d$, $\th \mapsto \th + \b^{(0)}(x,\th)$ 
is the inverse diffeomorphism of the map $\R^d \to \R^d$, 
$\th \mapsto \th + \a^{(0)}(x,\th)$. 

\smallskip

$(iii)$ \emph{(Composition)}. Let $s \geq s_0 > d + k + 3$. 
Let $\a,f,\tilde \a, \tilde f$ be like in $(ii)$, and let $u \in X_{s,F}$. 
Define the composition map $u \circ f$ as the collection 
$\{ (u \circ f)^{(j)} : |j| \leq k \}$ where 
$(u \circ f)^{(0)}$ is the restriction to $x \in F$ 
of the map
\[
(\tilde u \circ \tilde f)(x,\th) := 
\tilde u(x) \big( \tilde f (x)(\th) \big) 
= \tilde u (x, \tilde f (x,\th)),
\]
namely $(u \circ f)^{(0)}(x,\th) = (\tilde u \circ \tilde f)(x,\th)$ 
for all $x \in F$, all $\th \in \R^d$,
and $(u \circ f)^{(j)}$ is the restriction to $x \in F$ 
of the $j$-th partial derivative $\pa_x^j (\tilde u \circ \tilde f)$
of $(\tilde u \circ \tilde f)$. 
Then $u \circ f \in X_{s,F}$, with bounds
\begin{align*}
\| u \circ f \|_{X_{s,F}} 
& \leq C_s ( \| u \|_{X_{s,F}} 
+ \| \a \|_{X_{s,F}} \| u \|_{X_{s_0,F}} ), 
\\ 
\| u \circ f - u \|_{X_{s,F}} 
& \leq C_s (\| u \|_{X_{s+1,F}} \| \a \|_{X_{s_0,F}}
+ \| u \|_{X_{s_0+1,F}} \| \a \|_{X_{s,F}}),
\end{align*}
where $C_s$ depends on $n,k,s$.
\end{prop}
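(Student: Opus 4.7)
The plan is to follow the three-step scheme outlined just before the statement: first extend $u$, $v$ and each $\a_i$ from $F$ to all of $\R^n$ using the operator $\mE_k$ of Theorem \ref{thm:WET sigma}; then prove all the estimates for the extensions via the equivalent derivative-only norm
\[
\|\tilde u\|_{Z(k+1,\R^n,s,\g,1)} = \max_{|j|\leq k+1}\,\g^{|j|}\,\|\pa_x^j \tilde u\|_{L^\infty(\R^n,\,H^{s-|j|}(\T^d))}
\]
provided by Lemma \ref{lemma:equiv.Z}; finally, restrict the resulting bounds to $x\in F$ using \eqref{trivial.restriction}. By the way the collections $(uv)^{(j)}$, $\b_i^{(j)}$, $(u\circ f)^{(j)}$ are defined in the statement, this reduces the whole proposition to three classical tame-calculus facts on the periodic Sobolev scale $H^r(\T^d)$, applied pointwise in $x\in\R^n$ (and almost everywhere for the top order $|j|=k+1$): the tame product, the inverse of a diffeomorphism close to the identity, and composition. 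The margins $s_0>k+1+\tfrac{d}{2}$ in $(i)$ and $s_0>d+k+3$ in $(ii)$-$(iii)$ are calibrated precisely so as to absorb the loss of up to $k+1$ $\th$-derivatives produced by differentiating in $x$, while keeping the low-regularity factor in each tame estimate in some $H^{s_1}$ with $s_1>\tfrac{d}{2}$.

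For $(i)$ I would expand by Leibniz,
\[
\pa_x^j(\tilde u\,\tilde v)=\sum_{\ell\leq j}\binom{j}{\ell}\,\pa_x^\ell \tilde u\cdot\pa_x^{j-\ell}\tilde v,\qquad |j|\leq k+1,
\]
and estimate each summand in $H^{s-|j|}(\T^d)$ via the standard Moser tame product $\|fg\|_{H^r}\leq C_r(\|f\|_{H^r}\|g\|_{H^{s_1}}+\|f\|_{H^{s_1}}\|g\|_{H^r})$, valid for any $s_1>\tfrac{d}{2}$, with $r:=s-|j|$. Picking $s_1$ in the nonempty interval $(\tfrac{d}{2},\,s_0-k-1)$ makes the embedding $H^{s_0-|m|}\hookrightarrow H^{s_1}$ available for every $|m|\leq k+1$, so each low-regularity factor $\|\pa_x^m\tilde v\|_{H^{s_1}}$ is controlled by $\g^{-|m|}\|v\|_{X_{s_0,F}}$, and similarly for $\tilde u$. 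The $\g$-weights combine into the correct $\g^{|j|}$; summing over $\ell$ and maximizing over $|j|$ gives the bilinear bound in the $Z$-norm, hence in $\|\cdot\|_{X_{s,F}}$ via Lemma \ref{lemma:equiv.Z} and \eqref{est.ext}.

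For $(ii)$ I would fix $x\in\R^n$ and use $H^{s_0}(\T^d)\hookrightarrow C^1(\T^d)$ (valid because $s_0>\tfrac{d}{2}+1$), together with $\|\tilde\a(x,\cdot)\|_{H^{s_0}}\leq C\|\a\|_{X_{s_0,F}}\leq C\d_0$, to conclude for $\d_0$ small that $\mathrm{id}+\tilde\a(x,\cdot)$ is a $C^1$-close-to-identity diffeomorphism of $\R^d$ commuting with $2\pi\Z^d$-translations, hence also of $\T^d$; its inverse has the form $\mathrm{id}+\bar\b(x)$ with $\bar\b(x)=-\tilde\a(x)\circ(\mathrm{id}+\bar\b(x))$. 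Classical tame inverse/composition estimates for periodic Sobolev diffeomorphisms near the identity (compare Appendix B of \cite{BBHM} and the calculus of \cite{BM-Euler}) give $\|\bar\b(x)\|_{H^{s_0}}\leq 2\|\tilde\a(x)\|_{H^{s_0}}$ and $\|\bar\b(x)\|_{H^{s}}\leq C_s\|\tilde\a(x)\|_{H^{s}}$ uniformly in $x$. To propagate Whitney regularity in $x$, I would differentiate the identity $\tilde f(x,\th+\bar\b(x,\th))=\th$ in $x$ and solve inductively on $|j|\leq k+1$: each $\pa_x^j\bar\b$ is a polynomial in the strictly lower-order $\pa_x^\ell\bar\b$ whose coefficients are derivatives $(\pa_x^m\pa_\th^q\tilde\a)\circ(\mathrm{id}+\bar\b)$ and inverses of $I+(\pa_\th\tilde\a)\circ(\mathrm{id}+\bar\b)$. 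Applying the tame product, composition and inverse estimates in $H^{s-|j|}(\T^d)$ at each induction step, one $x$-derivative at a time, then yields $\g^{|j|}\|\pa_x^j\bar\b(x,\cdot)\|_{H^{s-|j|}}\leq C_s\|\a\|_{X_{s,F}}$ uniformly in $x$, which is precisely the $Z$-norm bound translating into the claimed $X_{s,F}$-bound for $\b$.

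For $(iii)$, the Fa\`a di Bruno formula expresses $\pa_x^j(\tilde u\circ\tilde f)$ as a sum of products of factors $(\pa_x^p\pa_\th^q\tilde u)\circ\tilde f$ times polynomials in the $\pa_x^\ell\tilde\a$, and each such term is controlled in $L^\infty_x H^{s-|j|}_\th$ by combining the tame composition estimate $\|w\circ(\mathrm{id}+\tilde\a)\|_{H^r}\leq C_r(\|w\|_{H^r}+\|\tilde\a\|_{H^r}\|w\|_{H^{s_0}})$ with the product bound from $(i)$. The second inequality of $(iii)$ follows from the integral representation $\tilde u\circ\tilde f-\tilde u=\bigl(\int_0^1 (\pa_\th\tilde u)(x,\th+t\tilde\a(x,\th))\,dt\bigr)\cdot\tilde\a$ and the same tame calculus, with the systematic trade of one $\th$-derivative against one factor of $\tilde\a$. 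The main obstacle is part $(ii)$: both the pointwise-in-$x$ existence of the inverse and the sharp induction on $|j|\leq k+1$ must be carried out together, with the loss of exactly one $\th$-derivative per $x$-derivative carefully matched to the loss of one unit in the Sobolev index, and with the smallness of $\|\a\|_{X_{s_0,F}}$ transferred uniformly across the extension operator; once this machinery is in place, $(i)$ and $(iii)$ become bookkeeping exercises in the Moser tame calculus.
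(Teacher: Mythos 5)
Your proposal follows exactly the extend--estimate--restrict path that the paper uses: the paper's own proof is a two-line reduction to tame estimates for the derivative-only norms \eqref{ginepro.5} on $\R^n$, which it then delegates to the references \cite{BM-Euler} and \cite{BM-Mem}. Your sketch of the underlying Moser product, inverse-diffeomorphism and Fa\`a di Bruno arguments simply fills in the details the paper cites rather than proves, so the approach is the same and correct.
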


\begin{proof} 
By the path described above, 
the proof is reduced to prove the corresponding estimates 
for functions defined on $\R^n$ working with norms \eqref{ginepro.5}. 
As is observed in subsection 2.1 of \cite{BM-Euler}, 
where norms \eqref{ginepro.5} are used, 
such estimates can be obtained, e.g., by adapting the proofs in \cite{BM-Mem}.
\end{proof}

\begin{remark} (\emph{Other estimates can be proved similarly}).
As is done in Proposition \ref{prop:appl}, one can easily pass 
from estimates regarding differentiable functions defined on $\R^n$ 
with finite norms \eqref{ginepro.5} 
to the same estimates 
(up to some constant, depending only on $n,k$, 
given by the application of Theorem \ref{thm:WET sigma})
for Whitney differentiable functions defined on a closed set $F \subset \R^n$ 
with finite norms \eqref{0903.8},
following the same path: 
first extend, 
then work with standard derivatives and norms \eqref{ginepro.5}, 
then restrict. 
\end{remark}

\section{Proof of Theorem \ref{thm:WET sigma}}
\label{sec:proof}

In this section we prove Theorem \ref{thm:WET sigma}. 
The proof is split into many lemmas, gathered in subsections.

In subsection \ref{subsec:notation} we fix the notations. 

In subsection \ref{subsec:cubes} we recall Whitney's decomposition in \cite{S} 
of open sets of $\R^n$ into dyadic cubes,   
stating only the properties that are used 
in the proof of Theorem \ref{thm:WET sigma}; 
precise references or direct proofs of such properties 
are postponed to section \ref{sec:app.cubes}.
 
In subsection \ref{subsec:def g} we define the collection $g$, 
extension of $f$ to $\R^n$, 
which is our candidate to prove the theorem in the case $\g=1$.
 
In subsection \ref{subsec:hyp} we state the hypotheses
that are tacitly assumed in all lemmas 
of subsections \ref{subsec:prime formule} to \ref{subsec:Rj}.

In subsection \ref{subsec:prime formule} we give equivalent expressions 
for $g^{(0)}$ and its derivatives of order $\leq k+1$, 
involving Taylor's polynomials.

In subsection \ref{subsec:est.far} we estimate $g$ 
at points $x \in \R^n$ at distance $\geq 1/2$ from $F$.
 
In subsection \ref{subsec:P-P j} we give a formula for the difference 
of Taylor's polynomial centered at two different points.

In subsection \ref{subsec:decomp} we introduce some further identities
involving the smoothing operators $S_\theta$, 
and we discuss why using only one of such decompositions is not enough.

In subsection \ref{subsec:est.close} we use such identities 
to prove estimates for $g$ in the vicinity of $F$, namely at points 
$x \in \R^n$ at $\dist(x,F) < 1/2$.

In subsection \ref{subsec:patch} we patch together 
the estimates proved in subsections \ref{subsec:est.far} and \ref{subsec:est.close}.

In subsection \ref{subsec:Rj} we extend the estimates 
of Taylor's remainders to include the case of 
two points both outside $F$.

In subsection \ref{subsec:conclusion} we complete the proof 
of Theorem \ref{thm:WET sigma} for $\g=1$,
and then we deduce the general case by rescaling.

\subsection{Notation} 
\label{subsec:notation}

To complete the multi-index notation \eqref{multi-index notation}, 
we say that two multi-indices $m=(m_1, \ldots, m_n)$, 
$j=(j_1,\ldots,j_n) \in \N^n$ satisfy 
``\,$m \leq j$\,'' (or ``\,$j \geq m$\,'')
if $m_i \leq j_i$ for all $i=1, \ldots, n$; 
we say that ``\,$m<j$\,'' (or ``\,$j>m$\,'')
if $m \leq j$ and $m \neq j$. 

The notation ``\,$a \lesssim b$'' (where $a,b$ are real numbers)
means ``there exists a constant $C>0$, 
\emph{depending only on $k,n$}, 
such that $a \leq C b$''. 
For example, a typical inequality is 
\[
\sum_{|\ell| \leq k} \frac{1}{\ell!} 4^{\rho - |\ell|} 
< 4^\rho \sum_{\ell_1, \ldots, \ell_n \geq 0} 
\frac{1}{\ell_1! \cdots \ell_n!} \Big( \frac14 \Big)^{\ell_1 + \ldots + \ell_n}
\leq 4^{k+1} e^{n/4},
\]
which we just write as 
$\sum_{|\ell| \leq k} \frac{1}{\ell!} 4^{\rho - |\ell|} \lesssim 1$.

\subsection{Dyadic cubes}
\label{subsec:cubes}

We recall the Whitney decomposition of open sets of $\R^n$ 
into dyadic cubes as given in Stein's book \cite{S} 
(and also in Grafakos' one \cite{Grafakos}, Appendix J). 
Here we only state the properties 
we need along the proof of Theorem \ref{thm:WET sigma}. 
Detailed references or direct proofs of all these properties 
are given in section \ref{sec:app.cubes}.

\begin{definition} \emph{(Cubes).}
\label{def:cubes}
By a \emph{cube} we mean a closed cube in $\R^n$,
i.e.\ a set of the form
$Q = \{ p + v : v \in [-r,r]^n \}$
with $p \in \R^n$ and $r>0$. 

We denote $\diam(Q)$ the diameter of $Q$, 
which is $\max \{ |x-y| : x,y \in Q \}$, 
and $\dist(Q,F)$ the distance of $Q$ from a closed set $F$, 
which is $\min \{ |x-y| : x \in Q, \ y \in F \}$
(this minimum is attained because $F$ is closed and $Q$ is compact).
\end{definition}

\begin{definition} \emph{(Expanded cubes).}
\label{def:Q*}
For any cube $Q = \{ p + v : v \in [-r,r]^n\}$,
we define $Q^*$ as the cube that has the same center $p$ as $Q$
and it is expanded by the factor 
$\lm = 1 + \e$, where $\e := \frac{1}{8 \sqrt n}$;
that is, $Q^* = \{ p+\lm v : v \in [-r,r]^n \}$.
\end{definition}

\begin{proposition} \emph{(Decomposition into cubes).} 
\label{prop:cubes}
Let $F \subseteq \R^n$ be a closed set, 
and let $\Om := \R^n \setminus F$ be its complement. 
Then there exists a collection of cubes
\[
\mF = \{ Q_1, Q_2, \ldots, Q_i, \ldots \}
\]
with the following properties.  
Denote 
\begin{equation} \label{def.qi}
q_i := \diam(Q_i).
\end{equation}
For each $Q_i$, 
let $Q_i^*$ be its expanded cube (Definition \ref{def:Q*}).
Then $\Om = \bigcup_{i=1}^\infty Q_i^*$, 
and for every point $x \in \Om$ there exists an open neighborhood $B_x$ of $x$, 
with $x \in B_x \subseteq \Om$, 
intersecting at most $12^n$ cubes $Q_i^*$.
For every $i = 1,2,\ldots$ one has 
$q_i > 0$, 
\begin{align} 
\label{diam.Q*.q}
q_i & \leq \diam(Q_i^*) \leq 2 q_i,
\\
\label{dist.QF.q}
q_i & \leq \dist(Q_i,F) \leq 4 q_i, 
\\
\label{dist.Q*F.q}
\frac12 q_i & \leq \dist(Q_i^*,F) \leq 4 q_i, 
\\
\label{dist.xF.q}
\frac12 q_i & \leq \dist(x,F) \leq 6 q_i
\quad \ \forall x \in Q_i^*. 
\end{align}
For each $i$, let $p_i$ be a point of $F$ that attains the distance between $Q_i$ and $F$, 
namely 
\begin{equation} \label{def.pi}
p_i \in F, \quad \ 
\dist(Q_i,F) = \dist(Q_i,p_i).
\end{equation}
Then
\begin{equation} \label{dist.xpi.q}
\frac12 q_i \leq |x-p_i| \leq 6 q_i
\quad \ \forall x \in Q_i^*.
\end{equation}
Moreover 
\begin{equation} \label{dist.xy.q}
\frac12 q_i \leq |x-y| \quad \ \forall x \in Q_i^*, \ \ y \in F.
\end{equation} 
\end{proposition}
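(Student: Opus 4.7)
The plan is to reproduce the classical Whitney decomposition, essentially following Chapter VI of Stein \cite{S}, and then to verify each of the numerical bounds \eqref{diam.Q*.q}--\eqref{dist.xy.q} by a direct calculation. I would first construct a preliminary family of selected dyadic cubes, then prune it to obtain pairwise disjoint interiors, and finally check all stated inequalities in order.

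First I would construct the family. For every integer $\ell \in \Z$ let $\mM_\ell$ be the mesh of closed cubes in $\R^n$ with side $2^\ell$ and vertices on $(2^\ell \Z)^n$; each such cube has diameter $\sqrt n\, 2^\ell$. Set the Whitney layers
\[
\Om_\ell := \bigl\{ x \in \Om : 2\sqrt n\, 2^\ell \leq \dist(x,F) < 2\sqrt n\, 2^{\ell+1} \bigr\},
\]
which partition $\Om$ disjointly. Select every cube $Q \in \mM_\ell$ whose interior meets $\Om_\ell$. Since any two dyadic cubes (from possibly different meshes) are either nested or interior-disjoint, discarding all selected cubes that are strictly contained in a larger selected one produces a family $\mF = \{Q_i\}$ with pairwise disjoint interiors which still covers $\Om$.

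Next I would verify the inequalities in order. For \eqref{dist.QF.q}: if $Q_i \in \mM_\ell$ meets $\Om_\ell$, picking $x_0 \in Q_i \cap \Om_\ell$ and applying the triangle inequality gives $\sqrt n\, 2^\ell \leq \dist(Q_i,F) \leq 4 \sqrt n\, 2^\ell$, i.e.\ $q_i \leq \dist(Q_i,F) \leq 4 q_i$. For \eqref{diam.Q*.q} I would just observe $\diam(Q_i^*) = (1+\e) q_i$ with $\e = 1/(8\sqrt n) \leq 1/8$. For \eqref{dist.Q*F.q}: any point in $Q_i^*$ is at distance at most $\sqrt n\,\e\,r_i = r_i/8$ from $Q_i$, where $r_i = q_i/(2\sqrt n)$ is the half-side, so $\dist(Q_i^*,F) \geq q_i - q_i/(16\sqrt n) \geq q_i/2$, while $\dist(Q_i^*,F) \leq \dist(Q_i,F) \leq 4q_i$. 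The point estimate \eqref{dist.xF.q} then follows since $\dist(x,F) \geq \dist(Q_i^*,F)$ and $\dist(x,F) \leq \diam(Q_i^*) + \dist(Q_i^*,F) \leq 2 q_i + 4 q_i = 6 q_i$. Inequality \eqref{dist.xpi.q} follows by writing $|x - p_i| \leq |x-z| + |z - p_i|$ for $z \in Q_i$ with $|z - p_i| = \dist(Q_i,F)$, and the lower bound $|x - p_i| \geq \dist(x,F) \geq q_i/2$; while \eqref{dist.xy.q} is just $|x-y| \geq \dist(x,F) \geq q_i/2$ for any $y \in F$.

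For the covering statement, every $x \in \Om$ lies in a unique $\Om_\ell$, hence in the interior of some $Q \in \mM_\ell$ (after an arbitrarily small perturbation lying in the mesh boundary case); the pruning preserves the covering of $\Om$, and a fortiori $\Om = \bigcup_i Q_i^*$. The main obstacle, and the place where the constant $12^n$ must be pinned down, is the local finiteness. Here the argument is: from \eqref{dist.Q*F.q} one checks that $Q_i^* \cap Q_j^* \neq \emptyset$ forces $q_i$ and $q_j$ to lie within a factor of $8$ of each other, so at most three consecutive dyadic scales interact with any given $Q_i^*$; a direct dyadic counting argument inside a fixed neighborhood, bounding the number of mesh cubes of each such scale that can meet a ball of radius $\sim q_i$, then yields the explicit bound $12^n$. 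All the other estimates are routine consequences of the triangle inequality and the careful choice $\e = 1/(8\sqrt n)$.
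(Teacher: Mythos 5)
Your construction of the decomposition and your verification of \eqref{diam.Q*.q}--\eqref{dist.xy.q} are correct and essentially the same as the paper's (the paper imports the family $\mF$ and the bound \eqref{dist.QF.q} from Stein's Theorem 1 and then derives the remaining inequalities exactly as you do, via the observation that every point of $Q_i^*$ lies within $(\e/2)\,q_i$ of $Q_i$). The genuine gap is in the local finiteness statement with the constant $12^n$. First, the comparability you claim is off: if $y\in Q_i^*\cap Q_j^*$, then \eqref{dist.Q*F.q} and \eqref{dist.xF.q} give $\tfrac12 q_i\le \dist(y,F)\le 6 q_j$, hence $q_i\le 12\,q_j$ and symmetrically; so the factor is $12$, not $8$, and the dyadic scales can differ by up to $3$ steps in either direction (seven scales, not three). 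More seriously, a ``direct dyadic counting argument inside a ball of radius $\sim q_i$'' cannot produce $12^n$: the interior-disjoint cubes $Q_j$ whose expansions meet $Q_i^*$ have side at least $2^{\ell_i}/16$ and must be located within a ball of radius on the order of $24\,q_i$, so a volume count along these lines yields a bound of the form $(C\sqrt n)^n$ with $C$ in the hundreds, far worse than $12^n$.

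To obtain exactly $12^n$ one has to go through the notion of \emph{touching} cubes, as in Stein and in section \ref{sec:app.cubes}: (i) two non-touching cubes of $\mF$, being dyadic, are separated by at least $\frac{1}{\sqrt n}\min\{q_i,q_j\}$; (ii) because $\e=\frac{1}{8\sqrt n}$ is small compared with this separation, $Q_i^*\cap Q_j^*\neq\emptyset$ forces $Q_i$ and $Q_j$ to touch; (iii) touching cubes of the decomposition have side lengths within a factor of $4$ of each other and all lie inside the concentric cube of three times the side, whence the count $(3s/(s/4))^n=12^n$. Your sketch skips (i) and (ii) entirely and replaces (iii) by a count that gives a much larger constant; since the proposition asserts the bound $12^n$ explicitly, this step must be repaired (or the statement weakened to ``at most $C(n)$ cubes''). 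Two minor further points: selecting mesh cubes whose \emph{interior} meets $\Om_\ell$ can miss points of $\Om_\ell$ lying on mesh hyperplanes (the standard fix is to select cubes that merely meet $\Om_\ell$), and one should justify that maximal selected cubes exist, which follows because the diameters of selected cubes containing a fixed point $x_0$ are bounded by $\dist(x_0,F)$.
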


\begin{remark} (\emph{Diameters $q_i$ as reference distances}).
In all the estimates of Proposition \ref{prop:cubes}, 
$q_i$ is chosen as the only ``reference distance'' 
with which all the other distances are compared. 
The various factors $\frac12, 4, 6$ in those bounds, 
as well as the value of $\e$ in Definition \ref{def:Q*}, 
could be improved (but this is not relevant in the present paper, 
as having sharp constants in Proposition \ref{prop:cubes} 
would lead to the same kind of result in Theorem \ref{thm:WET sigma}).
\end{remark}

\begin{proposition} \emph{(Partition of unity).}
\label{prop:partition of unity}
Let $F, \Om, \mF, Q_i, Q_i^*, q_i$ 
be as in Proposition \ref{prop:cubes}.
For each cube $Q_i \in \mF$ there exists a function 
$\ph_i^* \in C^\infty(\R^n, \R)$ such that 
\begin{equation} \label{POU}
0 \leq \ph_i^*(x) \leq 1
\ \  \forall x \in \R^n; 
\qquad 
\ph_i^*(x) = 0 
\ \  \forall x \notin Q_i^*;
\qquad 
\sum_{i=1}^\infty \ph_i^*(x) = 1 
\ \  \forall x \in \Om
\end{equation}
where the series is, in fact, a finite sum in a neighbourhoof of any $x \in \Om$.
Moreover, for all nonzero multi-indices $\ell$, 
\begin{equation} \label{POU.der}
\sum_{i=1}^\infty \pa_x^\ell \ph_i^*(x) = 0 \ \ \forall x \in \Om; 
\qquad 
|\pa_x^\ell \ph_i^*(x)| \leq \frac{C_\ell}{q_i^{|\ell|}} 
\ \ \forall x \in \Om, 
\end{equation}
where the constant $C_\ell > 0$ depends only on $n,\ell$, 
and not on $Q_i,x,F$.
\end{proposition}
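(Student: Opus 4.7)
The plan is to apply the standard partition-of-unity recipe to the Whitney decomposition: build $C^\infty$ bumps supported on each expanded cube $Q_i^*$, add them to get a smooth, strictly positive function on $\Om$, and then normalize.

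First, I would fix once and for all an auxiliary function $\psi \in C^\infty(\R^n, \R)$ with $0 \leq \psi \leq 1$, $\psi \equiv 1$ on $[-1,1]^n$, and $\psi \equiv 0$ outside $[-\lm,\lm]^n$, where $\lm = 1+\e$ is the expansion factor of Definition \ref{def:Q*}. Writing $Q_i = \{ c_i + v : v \in [-r_i,r_i]^n\}$, set
\[
\ph_i(x) := \psi\!\left(\frac{x-c_i}{r_i}\right),
\]
so that $\ph_i \in C^\infty(\R^n,\R)$, $0 \leq \ph_i \leq 1$, $\ph_i \equiv 1$ on $Q_i$, $\ph_i \equiv 0$ outside $Q_i^*$, and $|\pa_x^\ell \ph_i(x)| \leq C_\ell\, r_i^{-|\ell|} \lesssim q_i^{-|\ell|}$, since $q_i = 2 r_i \sqrt n$ by \eqref{def.qi}.

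Next, I would set $\Phi(x) := \sum_{i=1}^\infty \ph_i(x)$ for $x \in \Om$. By the local finiteness stated in Proposition \ref{prop:cubes} (each $x \in \Om$ has a neighbourhood meeting at most $12^n$ of the $Q_i^*$), the series is locally finite on $\Om$, hence $\Phi \in C^\infty(\Om,\R)$; and since every $x \in \Om$ lies in some $Q_j$ with $\ph_j(x) = 1$, one has $\Phi(x) \geq 1$ on $\Om$. Define $\ph_i^*(x) := \ph_i(x)/\Phi(x)$ for $x \in \Om$ and $\ph_i^*(x) := 0$ for $x \in F$. Since $\ph_i$ is supported in $Q_i^*$ and $\dist(Q_i^*,F) \geq \tfrac12 q_i > 0$ by \eqref{dist.Q*F.q}, the function $\ph_i^*$ vanishes in a full neighbourhood of $F$, so $\ph_i^* \in C^\infty(\R^n,\R)$ and satisfies \eqref{POU}. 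The identity $\sum_i \ph_i^* = \Phi/\Phi = 1$ on $\Om$ is immediate from the construction, and termwise differentiation (justified by local finiteness) gives $\sum_i \pa_x^\ell \ph_i^*(x) = 0$ on $\Om$ for every $\ell \neq 0$.

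The only non-formal step is then the pointwise derivative bound for $\ph_i^*$. I would apply Leibniz's rule to $\ph_i^* = \ph_i \cdot \Phi^{-1}$ together with Fa\`a di Bruno's formula applied to $\Phi^{-1}$: every resulting term is a product of derivatives of $\ph_i$, derivatives of $\Phi$, and powers of $\Phi^{-1}$ (all bounded by $1$ because $\Phi \geq 1$). Fixing $x \in Q_i^*$, one has $\pa^m \Phi(y) = \sum_{j : y \in Q_j^*} \pa^m \ph_j(y)$ for $y$ close to $x$, a sum of at most $12^n$ terms by Proposition \ref{prop:cubes}, so $|\pa^m \Phi(y)| \lesssim \max_j q_j^{-|m|}$ over the indices $j$ appearing in the sum. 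The main (and essentially only) obstacle is the comparability property $q_i \sim q_j$ for all $j$ with $Q_i^* \cap Q_j^* \neq \emptyset$, i.e.\ neighbouring Whitney cubes have diameters bounded by fixed multiples of one another: this is a standard feature of Whitney's construction which, as the author has announced, is proved in section \ref{sec:app.cubes}, and which I would invoke exactly here. Combining these bounds yields $|\pa_x^\ell \ph_i^*(x)| \leq C_\ell\, q_i^{-|\ell|}$ with $C_\ell$ depending only on $n,\ell$.
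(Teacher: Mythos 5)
Your construction is correct and is exactly the standard one that the paper's proof simply cites (Stein, p.~170 for the construction of the $\ph_i^*$ and eq.~(13) on p.~174 for the derivative bound): rescaled bumps equal to $1$ on $Q_i$ and supported in $Q_i^*$, normalized by their sum $\Phi \geq 1$, with Leibniz/Fa\`a di Bruno giving the estimate \eqref{POU.der}. The one fact you invoke without proof --- comparability of $q_i$ and $q_j$ whenever $Q_i^* \cap Q_j^* \neq \emptyset$ --- is not stated as a separate lemma in section \ref{sec:app.cubes}, but it follows in one line from \eqref{dist.xF.q} applied to a common point of the two expanded cubes (giving $\tfrac12 q_i \leq 6 q_j$ and $\tfrac12 q_j \leq 6 q_i$), so your argument is complete.
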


We define 
\begin{equation} \label{def.mN}
\mN := \{ i : q_i \leq 1 \} \subseteq \N,
\end{equation}
where $q_i$ is defined in \eqref{def.qi}. 
The cubes $Q_i$ with index $i \in \mN$, namely those with diameter 
$q_i \leq 1$, have bounded distance from $F$, 
because, by \eqref{dist.QF.q}, $\dist(Q_i,F) \leq 4$ for all $i \in \mN$.

\begin{lemma} \emph{(Partition of unity close to $F$).}
\label{lemma:cubi vicini}
Let $x \in \Om$ with $\dist(x,F) < 1/2$. 
Then $\ph_i^*(x) = 0$ for all $i \notin \mN$, 
and therefore
\begin{equation} \label{1.0}
1 = \sum_{i=1}^\infty \ph_i^*(x)
= \sum_{i \in \mN} \ph_i^*(x), 
\qquad  
0 = \sum_{i \in \mN} \pa_x^\ell \ph_i^*(x)
\quad \ \forall \ell \in \N^n, \ \ \ell \neq 0.
\end{equation}
\end{lemma}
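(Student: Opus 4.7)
The plan is to show that the assumption $\dist(x,F) < 1/2$ forces any cube $Q_i^*$ containing $x$ to satisfy $q_i \leq 1$, i.e.\ $i \in \mN$. Then the claimed identities \eqref{1.0} follow at once from Proposition \ref{prop:partition of unity} by discarding the vanishing terms.

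First I would fix $x \in \Om$ with $\dist(x,F) < 1/2$ and consider any index $i$ with $i \notin \mN$, namely $q_i > 1$. The key ingredient is the lower bound \eqref{dist.xF.q}: if it were the case that $x \in Q_i^*$, then $\dist(x,F) \geq \tfrac{1}{2} q_i > \tfrac{1}{2}$, contradicting our hypothesis on $x$. Hence $x \notin Q_i^*$, and the support condition in \eqref{POU} gives $\ph_i^*(x) = 0$.

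Having established this, the series identity in \eqref{POU} yields
\[
1 = \sum_{i=1}^\infty \ph_i^*(x) = \sum_{i \in \mN} \ph_i^*(x) + \sum_{i \notin \mN} \ph_i^*(x) = \sum_{i \in \mN} \ph_i^*(x),
\]
since each term with $i \notin \mN$ vanishes at $x$. For the derivatives, since $\ph_i^* \equiv 0$ on a neighborhood of $x$ whenever $i \notin \mN$ (the complement of $Q_i^*$ is open and contains $x$), we have $\pa_x^\ell \ph_i^*(x) = 0$ for all multi-indices $\ell$ and all such $i$. Combined with the derivative identity in \eqref{POU.der}, which gives $\sum_{i=1}^\infty \pa_x^\ell \ph_i^*(x) = 0$ for all nonzero $\ell$, we conclude $\sum_{i \in \mN} \pa_x^\ell \ph_i^*(x) = 0$.

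There is no real obstacle here: the lemma is essentially a bookkeeping consequence of \eqref{dist.xF.q}, which pins down $q_i$ as the natural scale of distance from $F$ within $Q_i^*$. The only subtlety worth noting is that one must use the fact that $x \notin Q_i^*$ implies $\ph_i^*$ vanishes identically in a neighborhood of $x$ (not merely $\ph_i^*(x) = 0$) in order to conclude that all derivatives at $x$ vanish; this is immediate from $\ph_i^* \equiv 0$ on the open set $\R^n \setminus Q_i^*$.
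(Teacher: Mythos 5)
Your proposal is correct and follows essentially the same route as the paper: use \eqref{dist.xF.q} to show $x \in Q_i^*$ forces $q_i \leq 2\dist(x,F) < 1$, hence $i \in \mN$, and then discard the vanishing terms from the identities of Proposition \ref{prop:partition of unity}. Your observation that $\ph_i^*$ vanishes on a whole neighborhood of $x$ for $i \notin \mN$ (so that all derivatives vanish there) is exactly the point the paper makes via local finiteness on the open set $\{\dist(\cdot,F) < 1/2\}$.
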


\subsection{Definition of the extension $g$} 
\label{subsec:def g}
Let $F \subseteq \R^n$ be a closed set, 
let $k \geq 0$ be an integer, 
and let $f = \{ f^{(j)} : j \in \N^n, \ |j| \leq k \}$ 
be a collection of functions, with $f^{(j)} : F \to E_{a_0}$. 
Let $\d > 0$. 
We define the collection 
\[
g = \{ g^{(j)} : j \in \N^n, \ |j| \leq k \}
\] 
in the following way. 
On $F$, we define 
\begin{equation} \label{1003.1}
g^{(j)}(x) := f^{(j)}(x) \quad \ \forall x \in F, \ |j| \leq k
\end{equation}
(\eqref{1003.1} is the only possible choice on $F$, 
since we want $g$ to be an extension of $f$). 
On $\Om := F^c = \R^n \setminus F$, we define 
\begin{equation} \label{def g0}
g^{(0)}(x) 
:= \sum_{i \in \mN} \, \sum_{\begin{subarray}{c} 
\ell \in \N^n  \\
|\ell| \leq k
\end{subarray}} 
\frac{1}{\ell!}\, S_{\theta_i}[ f^{(\ell)}(p_i) ] \, (x-p_i)^\ell \, \ph_i^*(x)
\quad \ \forall x \in \Om
\end{equation}
where $\mN$ is defined in \eqref{def.mN}, 
$p_i$ are the points of $F$ defined in \eqref{def.pi}, 
$\ph_i^*$ are defined in Proposition \ref{prop:partition of unity}, 
$S_ {\theta_i}$ are the smoothing operators $S_\theta$ in \eqref{S.generic} 
with parameters $\theta = \theta_i$, and 
\begin{equation} \label{def theta}
\theta_i := q_i^{-\tau},
\quad \ \tau := \frac{1}{\delta}, 
\end{equation}
where $q_i$ is defined in \eqref{def.qi}.

By Proposition \ref{prop:cubes}, 
for every $x \in \Om$ there exists an open neighborhood $B_x$ of $x$, 
with $B_x \subseteq \Om$, such that $B_x$ intersects 
at most $12^n$ of the expanded cubes $Q_i^*$; 
hence there exists a finite set $\mN_x \subseteq \mN$
of indices $i \in \N$ such that 
$B_x \cap Q_i^*$ is empty for all $i \notin \mN_x$. 
Therefore, on $B_x$, 
\begin{equation} \label{locally finite}
\ph_i^*(y) = 0 \quad \ \forall y \in B_x, \ i \notin \mN_x,
\end{equation}
and $g^{(0)}$ on $B_x$ is given by the finite sum 
\begin{equation} \label{locally finite sum}
g^{(0)}(y) = \sum_{i \in \mN_x} \, 
\sum_{|\ell| \leq k} 
\frac{1}{\ell!}\, S_{\theta_i}[ f^{(\ell)}(p_i) ] \, (y-p_i)^\ell \, \ph_i^*(y)
\quad \ \forall y \in B_x.
\end{equation}
For each $i,\ell$, 
the map $y \mapsto (y-p_i)^\ell \ph_i^*(y)$ is $C^\infty (\R^n, \R)$, 
while $S_{\theta_i}[ f^{(\ell)}(p_i) ]$ is a vector of $E_\infty$
which does not depend on $y$.  
Thus $g^{(0)}(y)$ is a $C^\infty$ function of $y$ from the open set $B_x$ 
to $E_{a_0}$ (in fact, to $E_s$ for all $s \in \mI$).  
Hence the derivative $\pa_x^j g^{(0)}(x)$ 
is well-defined in every point $x \in \Om$, 
for every multi-index $j \in \N^n$.
We define the remaining elements of the collection $g$ as
\begin{equation} \label{def gj}
g^{(j)}(x) := 
\pa_x^j g^{(0)}(x) 
\quad \ \forall x \in \Om, \ \	1 \leq |j| \leq k.
\end{equation}
Thus \eqref{1003.1}, \eqref{def g0}, \eqref{def gj} 
define $g^{(j)}(x)$ for all $0 \leq |j| \leq k$ and all $x \in \R^n$.

\subsection{Assumptions}
\label{subsec:hyp}

In all lemmas of subsections 
\ref{subsec:prime formule}, \ldots, \ref{subsec:Rj}
the following assumptions are always understood:
$\mI, a_0, E_a, \| \ \|_a, S_\theta$ are as 
in subsection \ref{subsec:scales};
$k, \a, \rho, \gamma, \delta, F, n, \sigma, \sigma_j, \sigma_\rho$ 
are as in Definition \ref{def Lip sigma}; 
the collection $f = \{ f^{(j)} : |j| \leq k \}$ 
belongs to $\Lip(\rho,F,\sigma,\g,\delta)$ 
as described in Definition \ref{def Lip sigma}; 
moreover we assume that $\g=1$, 
and that $f$ satisfies \eqref{0903.1-2} 
for some constant $M \geq 0$, namely 
\begin{alignat}{2}
\label{0903.1.normalized}
\| f^{(j)}(x) \|_{\sigma_j} & \leq M 
\quad && \forall x \in F, \ \ |j| \leq k,
\\
\| R_j(x,y) \|_{\sigma_\rho} & \leq M |x-y|^{\rho-|j|}
\quad && \forall x,y \in F, \ \  |j| \leq k.
\label{0903.2.normalized}
\end{alignat}
Also, we assume that 
$\Om, Q_i, q_i, Q_i^*, p_i, \ph_i^*, \mN$ 
are as in subsection \ref{subsec:cubes}, 
$g = \{ g^{(j)} : |j| \leq k \}, \theta_i, \tau$ 
as in subsection \ref{subsec:def g},
and $K_0,K$ are defined in \eqref{def K}.

\subsection{Equivalent formula for $g^{(0)}$ and its derivatives in $\Om$} 
\label{subsec:prime formule}

In \eqref{def Pj Rj} the function $P_j(x,y)$ has been defined for $x,y \in F$; 
however, the same formula is well defined for all $x \in \R^n$, 
because $P_j(x,y)$ is a polynomial in $x$. 
Thus, for $y \in F$, $x \in \R^n$, we define 
\begin{align}
P_j(x,y) 
& := \sum_{|j+\ell|\leq k} \frac{1}{\ell!}\, f^{(j+\ell)}(y) (x-y)^\ell
\quad \ \forall x \in \R^n, \ \ y \in F, \ \ |j| \leq k.
\label{def Pj}
\end{align}
For each $y \in F$, $f^{(j+\ell)}(y)$ is a vector of $E_{a_0}$ independent of $x$,
therefore the map $x \mapsto P_j(x,y)$ is $C^\infty(\R^n, E_{a_0})$, 
and, by induction, one proves that 
\[
\pa_x^j P_0(x,y) = P_j(x,y) 
\quad \ \forall x \in \R^n, \ \ y \in F, \ \ |j| \leq k
\]
(of course $f^{(j+\ell)}(y)$ is not obtained by differentiating with respect to $y$ --- 
it would not be allowed at this stage --- 
but by renaming the summation index after differentiating with respect to $x$).

Recall the formula 
\begin{equation} \label{ab.der}
\pa_x^j (ab) = \sum_{0 \leq m \leq j} \binom{j}{m} (\pa_x^m a) \, (\pa_x^{j-m} b)
\end{equation}
for the partial derivatives of the product of any two functions $a(x), b(x)$ 
(the case $j=0$ is trivially included).
By the definitions \eqref{def g0}, \eqref{def gj}, 
and recalling that the sum is locally finite 
(see \eqref{locally finite}-\eqref{locally finite sum}),
for all $x \in \Om$, $|j| \leq k$ we calculate 
\begin{align}
g^{(j)}(x) 
= \pa_x^j g^{(0)}(x) 
& = \sum_{0 \leq m \leq j} \binom{j}{m} 
\sum_{i \in \mN} \sum_{|\ell| \leq k} S_{\theta_i}[ f^{(\ell)}(p_i)] 
\pa_x^m \Big( \frac{ (x-p_i)^\ell }{\ell!} \Big) \, \pa_x^{j-m} \ph_i^*(x).
\label{tutti} 
\end{align}
By induction, one has
\[
\pa_x^m \Big( \frac{ (x-p_i)^\ell }{\ell!} \Big) = 
\frac{ (x-p_i)^{\ell-m} }{(\ell-m)!} \quad \ \forall m \in \N^n, \ m \leq \ell, 
\]
while $\pa_x^m ( (x-p_i)^\ell ) = 0$ for all multi-indices $m$ that are not $\leq \ell$. 
Hence, after changing the summation index $\ell - m = \ell'$ 
and then renaming $\ell' \to \ell$, 
and recalling definition \eqref{def Pj}, one has  
\begin{align}
g^{(j)}(x)
& = \sum_{0 \leq m \leq j} \binom{j}{m} 
\sum_{i \in \mN} 
\sum_{\begin{subarray}{c} \ell \in \N^n \\ |m+\ell| \leq k \end{subarray}} 
S_{\theta_i}[ f^{(m+\ell)}(p_i)]\frac{ (x-p_i)^{\ell} }{\ell!} \, \pa_x^{j-m} \ph_i^*(x)
\notag \\ & 
= \sum_{0 \leq m \leq j} \binom{j}{m} 
\sum_{i \in \mN} S_{\theta_i}[ P_m(x,p_i)] \pa_x^{j-m} \ph_i^*(x)
\quad \ \forall x \in \Om, \ \ |j| \leq k.
\label{tutti.1}
\end{align}

In the same way we also obtain a formula for the 
partial derivatives of $g^{(0)}(x)$ 
of order $k+1$: 
given any multi-index $j$ of length $|j| = k+1$, 
by \eqref{ab.der} the derivative $\pa_x^j g^{(0)}(x)$ 
is given by the r.h.s.\ of \eqref{tutti}; 
for $m=j$ one has $\pa_x^m((x-p_i)^\ell) = 0$ 
because $m$ is not $\leq \ell$ 
($|m| = |j| = k+1$, while $|\ell| \leq k$).
Hence the term with $m=j$ can be removed from the sum, and, 
after changing the summation index $\ell-m=\ell'$ 
and renaming $\ell' \to \ell$, 
we obtain
\begin{align}
\pa_x^j g^{(0)}(x)
& = \sum_{0 \leq m < j} \binom{j}{m} 
\sum_{i \in \mN} S_{\theta_i}[ P_m(x,p_i)] \pa_x^{j-m} \ph_i^*(x)
\quad \ \forall x \in \Om, \ \ |j| = k+1.
\label{der.k+1}
\end{align}
Note that all multi-indices $m$ in \eqref{der.k+1} 
have length $|m| \leq k$, 
therefore all the polynomials $P_m(x,p_i)$ in \eqref{der.k+1}
are well defined. 

\subsection{Estimates at points far from $F$}
\label{subsec:est.far}

We prove some estimates for $g$ and for polynomials 
at points $x$ not close to $F$.

\begin{lemma} \emph{($g=0$ at $\dist(x,F) > 6$).} 
\label{lemma:cubi lontani}
One has 
\[
g^{(j)}(x) = 0 
\quad \ \forall |j| \leq k, \ \ 
x \in \Om, \ \ 
\dist(x,F) > 6.
\]
\end{lemma}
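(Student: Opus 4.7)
The plan is to observe that, by construction, $g^{(0)}$ identically vanishes on an open neighborhood of the region $\{\dist(\cdot, F) > 6\}$; once this is established, the conclusion for $g^{(j)}$ with $1 \leq |j| \leq k$ is immediate from the definition $g^{(j)}(x) = \pa_x^j g^{(0)}(x)$ on $\Om$ (see \eqref{def gj}).

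To show that $g^{(0)} \equiv 0$ on the open set $U := \{ y \in \R^n : \dist(y,F) > 6 \}$, I would argue as follows. The defining formula \eqref{def g0} of $g^{(0)}$ on $\Om$ involves only cubes indexed by $i \in \mN$, i.e.\ those with $q_i \leq 1$ (see \eqref{def.mN}). For such $i$, bound \eqref{dist.xF.q} of Proposition \ref{prop:cubes} gives $\dist(z,F) \leq 6 q_i \leq 6$ for every $z \in Q_i^*$. Contrapositively, any point $y$ with $\dist(y,F) > 6$ lies outside $Q_i^*$ for every $i \in \mN$, and hence, by the support property $\ph_i^*(y) = 0$ for $y \notin Q_i^*$ stated in \eqref{POU}, we get $\ph_i^*(y) = 0$ for every $i \in \mN$. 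Plugging this into \eqref{def g0} yields $g^{(0)}(y) = 0$ for all $y \in U$. Note also that $U \subseteq \Om$, since $\dist(y,F) > 6 > 0$ implies $y \notin F$.

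Since the function $y \mapsto \dist(y,F)$ is continuous, $U$ is open in $\R^n$. A given $x \in \Om$ with $\dist(x,F) > 6$ therefore admits an open neighborhood contained in $U$ on which $g^{(0)}$ vanishes identically. Hence every partial derivative of $g^{(0)}$ of any order vanishes at $x$, and in particular $g^{(j)}(x) = \pa_x^j g^{(0)}(x) = 0$ for every multi-index $|j| \leq k$, which is the claim.

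No serious obstacle is expected: the argument is purely combinatorial/geometric, using only the restriction of the summation in \eqref{def g0} to indices in $\mN$, the size bound \eqref{dist.xF.q} on $Q_i^*$, and the support property \eqref{POU} of the partition of unity. The one point worth being careful about is that one needs an open neighborhood of $x$ on which $g^{(0)}$ vanishes (not just vanishing at $x$ itself), and this is supplied by the continuity of $\dist(\cdot,F)$.
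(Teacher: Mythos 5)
Your proof is correct and follows essentially the same route as the paper: both show, via \eqref{dist.xF.q} and $q_i\leq 1$ for $i\in\mN$, that every point at distance $>6$ from $F$ lies outside all expanded cubes $Q_i^*$ with $i\in\mN$, so $g^{(0)}$ vanishes identically on that open set and hence so do all its derivatives. Your explicit remark on the openness of $U$ (via continuity of $\dist(\cdot,F)$) just makes precise a point the paper leaves implicit.
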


\begin{proof}
Suppose that $\dist(x,F) > 6$ and $x \in Q_i^*$ for some $i \in \mN$. 
Then $q_i \leq 1$ and, by \eqref{dist.xF.q}, $6 < \dist(x,F) \leq 6 q_i \leq 6$, 
a contradiction. 
This implies that on the open set $\{ x \in \Om : \dist(x,F) > 6 \}$
one has $\ph_i^*(x) = 0$ for all $i \in \mN$. 
Hence, by definition \eqref{def g0}, 
$g^{(0)}(x) = 0$ for all $x$ in that open set. 
As a consequence, all the derivatives $g^{(j)}(x)$ 
are also zero on that set. 
\end{proof}

\begin{lemma} \emph{(Bound for $\| g^{(j)}(x) \|_{\sigma_\rho}$ at $1/2 \leq \dist(x,F) \leq 6$).}
\label{lemma:cubi intermedi}
One has 
\[
\| g^{(j)}(x) \|_{\sigma_\rho} \lesssim K M
\quad \ \forall |j| \leq k, 
\ \ x \in \Om, 
\ \ 1/2 \leq \dist(x,F) \leq 6.
\]
\end{lemma}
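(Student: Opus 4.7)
The plan is to start from the explicit formula \eqref{tutti.1} for $g^{(j)}(x)$ on $\Om$ and exploit the fact that, for $x$ in the annular region $1/2 \le \dist(x,F) \le 6$, the cubes $Q_i^*$ that can contain $x$ and have index $i \in \mN$ have diameters $q_i$ trapped in a compact subinterval of $(0,\infty)$. By Proposition \ref{prop:cubes}, at most $12^n$ expanded cubes contain $x$, so the sum in \eqref{tutti.1} is uniformly finite; for each such $i$, membership in $\mN$ gives $q_i \le 1$, while \eqref{dist.xF.q} combined with $\dist(x,F) \ge 1/2$ gives $q_i \ge 1/12$. From \eqref{dist.xpi.q} I then get $|x-p_i|\le 6 q_i \le 6$, and from \eqref{POU.der} the derivatives of the partition of unity satisfy $|\pa_x^{j-m}\ph_i^*(x)| \le C_{|j-m|} q_i^{-|j-m|} \lesssim 1$.

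Next I would estimate each Taylor polynomial $P_m(x,p_i)$ in the $\s_\rho$-norm. Every term $\frac{1}{\ell!} f^{(m+\ell)}(p_i) (x-p_i)^\ell$ has $|m+\ell| \le k$, so $\s_{|m+\ell|} \ge \s_k > \s_\rho$; by the embedding \eqref{0303.1} together with the hypothesis \eqref{0903.1.normalized},
\[
\| f^{(m+\ell)}(p_i) \|_{\s_\rho} \le \| f^{(m+\ell)}(p_i) \|_{\s_{|m+\ell|}} \le M.
\]
Combining this with $|x-p_i|\le 6$ yields $\| P_m(x,p_i) \|_{\s_\rho} \lesssim M$. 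The smoothing $S_{\theta_i}$ is then applied without any loss: using \eqref{S1} with $a=b=\s_\rho$ gives $\| S_{\theta_i}[P_m(x,p_i)] \|_{\s_\rho} \le A_{\s_\rho\s_\rho} \| P_m(x,p_i) \|_{\s_\rho} \lesssim K_0 M$, independently of $\theta_i$ (this is the crucial point: since the norm on both sides is $\s_\rho$, the exponent $\theta_i^{b-a}$ is trivial and the size of $\tau = 1/\delta$ plays no role here).

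Plugging these bounds into \eqref{tutti.1}, summing over the at most $12^n$ relevant cubes and over $m \le j$, I obtain $\| g^{(j)}(x) \|_{\s_\rho} \lesssim K_0 M \le K M$, which is the claim. I do not anticipate any genuine obstacle: the argument is essentially a bookkeeping exercise once one observes that the region $1/2 \le \dist(x,F) \le 6$ forces $q_i$ into the compact interval $[1/12,1]$, so the smoothing operators act with a uniformly bounded parameter, the partition-of-unity derivatives are uniformly bounded, and $|x-p_i|$ is uniformly bounded, collapsing every factor in \eqref{tutti.1} to a constant multiple of $M$ (with the $K$-dependence coming only from the trivial application of \eqref{S1}).
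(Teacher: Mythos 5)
Your proposal is correct and follows essentially the same route as the paper: start from \eqref{tutti.1}, observe that $i\in\mN$ together with $\dist(x,F)\ge 1/2$ and \eqref{dist.xF.q} traps $q_i$ in $[1/12,1]$, bound the smoothing operator trivially at the fixed level $\s_\rho$ via \eqref{S1} with $a=b$, use the embedding \eqref{0303.1} plus \eqref{0903.1.normalized} on each $f^{(m+\ell)}(p_i)$, and control \eqref{POU.der} and the $12^n$ overlap. The only cosmetic difference is that the paper tracks the combined power $q_i^{|\ell|+|m|-|j|}$ before invoking $q_i\in[1/12,1]$, while you bound each factor by a constant separately; both are equivalent.
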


\begin{proof}
By \eqref{tutti.1},
\[
\| g^{(j)}(x) \|_{\sigma_\rho} 
\lesssim \sum_{0 \leq m \leq j} 
\sum_{i \in \mN} 
\sum_{|m+\ell| \leq k} 
\| S_{\theta_i}[ f^{(m+\ell)}(p_i)] \, \|_{\sigma_\rho} 
|x-p_i|^{|\ell|} |\pa_x^{j-m} \ph_i^*(x)|.
\]
By \eqref{S1}, \eqref{def K}, 
and then \eqref{0303.1}, one has 
\[
\| S_{\theta_i}[ f^{(m+\ell)}(p_i)] \, \|_{\sigma_\rho}
\leq K \| f^{(m+\ell)}(p_i) \|_{\sigma_\rho} 
\leq K \| f^{(m+\ell)}(p_i) \|_{\sigma_{m+\ell}}
\]
because $\sigma_\rho < \sigma_{m+\ell}$, see \eqref{def sj}. 
By \eqref{0903.1.normalized}, 
$ \| f^{(m+\ell)}(p_i) \|_{\sigma_{m+\ell}} \leq M$.
If $\pa_x^{j-m} \ph_i^*(x) \neq 0$, 
then $x \in Q_i^*$, and $|x-p_i| \lesssim q_i$
by \eqref{dist.xpi.q}. 
Moreover, by \eqref{POU.der}, 
$|\pa_x^{j-m} \ph_i^*(x)| \lesssim q_i^{|m|-|j|}$. 
Hence 
\[
\| g^{(j)}(x) \|_{\sigma_\rho} 
\lesssim K M \sum_{0 \leq m \leq j} 
\sum_{\begin{subarray}{c} i \in \mN \\ x \in Q_i^* \end{subarray}} 
\sum_{|m+\ell| \leq k} q_i^{|\ell| + |m| - |j|}.
\]
The exponent $|\ell| + |m| - |j|$ can be positive, negative or zero; 
however, $q_i \leq 1$ because $i \in \mN$ 
and $q_i \geq 1/12$ 
because, by assumption, $1/2 \leq \dist(x,F)$, 
and, by \eqref{dist.xF.q}, $\dist(x,F) \leq 6 q_i$. 
Hence 
\[
q_i^{|\ell| + |m| - |j|} \lesssim 1
\]
for all $\ell, m, j$ in the sum. 
Finally, the number of indices $i$ such that $x \in Q_i^*$ is bounded by $12^n$
by Proposition \ref{prop:cubes}. 
\end{proof}

\begin{lemma} 
\emph{(Bound for $\| g^{(j)}(x) \|_{\sigma_j}$ at $1/2 \leq \dist(x,F) \leq 6$).}
\label{lemma:cubi intermedi sj}
One has 
\[
\| g^{(j)}(x) \|_{\sigma_j} \lesssim KM 
\quad \ \forall |j| \leq k, 
\ \ x \in \Om, 
\ \ 1/2 \leq \dist(x,F) \leq 6.
\]
\end{lemma}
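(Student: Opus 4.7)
The plan is to mirror the computation carried out in Lemma \ref{lemma:cubi intermedi}, starting from the expression \eqref{tutti.1} for $g^{(j)}(x)$, but replacing the single continuous-inclusion step that produced $\|\cdot\|_{\sigma_\rho}$ with a finer estimate of $\|S_{\theta_i}[f^{(m+\ell)}(p_i)]\|_{\sigma_j}$. This is the step where the specific choice $\theta_i = q_i^{-\tau}$ with $\tau = 1/\delta$ in \eqref{def theta} begins to pay off: the whole point of relating the smoothing parameter to the diameter of the Whitney cube is precisely to swap regularity for size factors $q_i^{\pm 1}$.

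I would split the bound on $\|S_{\theta_i}[f^{(m+\ell)}(p_i)]\|_{\sigma_j}$ according to the sign of $|m+\ell|-|j|$. When $|m+\ell| \geq |j|$ one has $\sigma_{m+\ell} \leq \sigma_j$, so \eqref{S1} with $a = \sigma_{m+\ell}$, $b = \sigma_j$ together with \eqref{0903.1.normalized} gives
\begin{equation*}
\|S_{\theta_i}[f^{(m+\ell)}(p_i)]\|_{\sigma_j} \leq K\, \theta_i^{\sigma_j - \sigma_{m+\ell}}\, \|f^{(m+\ell)}(p_i)\|_{\sigma_{m+\ell}} \lesssim KM\, q_i^{|j|-|m+\ell|},
\end{equation*}
since $\sigma_j - \sigma_{m+\ell} = (|m+\ell|-|j|)\delta$ and $\theta_i^\delta = q_i^{-1}$. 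When instead $|m+\ell| < |j|$ one has $\sigma_{m+\ell} > \sigma_j$, so the continuous inclusion \eqref{0303.1} combined with the boundedness of $S_{\theta_i}$ on $E_{\sigma_j}$ supplied by \eqref{S1} with $a=b=\sigma_j$ gives the cruder bound $\|S_{\theta_i}[f^{(m+\ell)}(p_i)]\|_{\sigma_j} \lesssim KM$.

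Combining these with $|x-p_i|^{|\ell|} \lesssim q_i^{|\ell|}$ from \eqref{dist.xpi.q} and $|\pa_x^{j-m}\ph_i^*(x)| \lesssim q_i^{|m|-|j|}$ from \eqref{POU.der}, each term in \eqref{tutti.1} contributes $\lesssim KM$ in the first case (the smoothing factor $q_i^{|j|-|m+\ell|}$ exactly cancels the geometric factor $q_i^{|\ell|+|m|-|j|} = q_i^{|m+\ell|-|j|}$) and $\lesssim KM\, q_i^{|m+\ell|-|j|}$ with a negative exponent in the second case. The hypothesis $1/2 \leq \dist(x,F) \leq 6$, together with $\dist(x,F) \leq 6 q_i$ from \eqref{dist.xF.q} and $i \in \mN$, forces $1/12 \leq q_i \leq 1$ whenever $x \in Q_i^*$, so all bounded powers of $q_i$ are $O(1)$; the local finiteness from Proposition \ref{prop:cubes} (at most $12^n$ cubes contain $x$) and the finite ranges of $m,\ell$ then yield $\|g^{(j)}(x)\|_{\sigma_j} \lesssim KM$.

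The main obstacle is really only bookkeeping: verifying that the two exponents of $q_i$ cancel exactly in the case $|m+\ell| \geq |j|$, and identifying which instance of \eqref{S1} (and which constant in \eqref{def K}) is invoked at each step. There is no new analytic content beyond Lemma \ref{lemma:cubi intermedi}: what is new is that reaching the smaller space $E_{\sigma_j}$ forces us to use \eqref{S1} with a nontrivial smoothing factor, and \eqref{def theta} is tailor-made so that this factor precisely balances the mismatch in powers of $q_i$ produced by the polynomial $(x-p_i)^\ell$ and the derivatives of the partition of unity.
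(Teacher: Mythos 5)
Your proposal is correct and follows essentially the same route as the paper: the same starting identity \eqref{tutti.1}, the same case split on the sign of $|m+\ell|-|j|$ (you merely assign the borderline case $|m+\ell|=|j|$ to the other branch, where the smoothing factor is $q_i^0=1$, which changes nothing), the same use of \eqref{S1} with $\theta_i^{\sigma_j-\sigma_{m+\ell}}=q_i^{|j|-|m|-|\ell|}$ to cancel the geometric factor $q_i^{|\ell|+|m|-|j|}$, and the same appeal to $1/12\leq q_i\leq 1$ and the $12^n$ overlap bound. No gaps.
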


\begin{proof}
By \eqref{tutti.1}, 
\begin{equation} \label{p.01}
\| g^{(j)}(x) \|_{\sigma_j} 
\lesssim 
\sum_{0 \leq m \leq j} 
\sum_{i \in \mN} 
\sum_{|m+\ell| \leq k} 
\| S_{\theta_i}[ f^{(m+\ell)}(p_i)] \, \|_{\sigma_j} 
|x-p_i|^{|\ell|} |\pa_x^{j-m} \ph_i^*(x)|.
\end{equation}

$\bullet$ \emph{Case $|m|+|\ell| \leq |j|$}. 
For indices $m,\ell$ such that $|m|+|\ell| \leq |j|$ 
one has $\sigma_j \leq \sigma_{m+\ell}$ (see \eqref{def sj}), 
therefore we proceed similarly as in the proof of 
Lemma \ref{lemma:cubi intermedi}: 
by \eqref{S1}, \eqref{def K}, 
then \eqref{0303.1}, 
and then \eqref{0903.1.normalized}, one has 
\[
\| S_{\theta_i}[ f^{(m+\ell)}(p_i)] \, \|_{\sigma_j}
\leq K \| f^{(m+\ell)}(p_i) \|_{\sigma_j} 
\leq K \| f^{(m+\ell)}(p_i) \|_{\sigma_{m+\ell}}
\leq KM.
\]
If $\pa_x^{j-m} \ph_i^*(x) \neq 0$, 
then $x \in Q_i^*$, and therefore  
$|x-p_i| \lesssim q_i$
by \eqref{dist.xpi.q}; 
also, $|\pa_x^{j-m} \ph_i^*(x)|$ $\lesssim q_i^{|m|-|j|}$
by \eqref{POU.der}.
Hence 
\[
|x-p_i|^{|\ell|} |\pa_x^{j-m} \ph_i^*(x)|
\lesssim q_i^{|\ell|+|m|-|j|}. 
\]
The exponent $|\ell| + |m| - |j|$ is $\leq 0$; 
but $q_i \geq 1/12$ 
because, by assumption, $1/2 \leq \dist(x,F)$, 
and, by \eqref{dist.xF.q}, $\dist(x,F) \leq 6 q_i$. 
Hence $q_i^{|\ell|+|m|-|j|} \lesssim 1$, and therefore
\[
\| S_{\theta_i}[ f^{(m+\ell)}(p_i)] \, \|_{\sigma_j} 
|x-p_i|^{|\ell|} |\pa_x^{j-m} \ph_i^*(x)|
\lesssim KM.
\]

$\bullet$ \emph{Case $|j| < |m|+|\ell|$}. 
In this case one has $\sigma_j > \sigma_{m+\ell}$ (see \eqref{def sj}).
Therefore, by \eqref{S1}, \eqref{def K}, 
then \eqref{0903.1.normalized}, 
and then \eqref{def sj}, \eqref{def theta},  
\begin{align*}
\| S_{\theta_i}[f^{(m+\ell)}(p_i)] \, \|_{\sigma_j} 
& \leq K \theta_i^{\sigma_j - \sigma_{m+\ell}} \| f^{(m+\ell)}(p_i) \|_{\sigma_{m+\ell}}
\leq K M \theta_i^{\sigma_j - \sigma_{m+\ell}}
= K M q_i^{|j|-|m|-|\ell|}.
\end{align*}
As above, 
\[
|x-p_i|^{|\ell|} \lesssim q_i^{|\ell|}, 
\quad \  
|\pa_x^{j-m} \ph_i^*(x)| \lesssim q_i^{|m|-|j|},
\]
whence we directly obtain 
\[
\| S_{\theta_i}[ f^{(m+\ell)}(p_i)] \, \|_{\sigma_j} 
|x-p_i|^{|\ell|} |\pa_x^{j-m} \ph_i^*(x)|
\lesssim KM.
\]

In both cases, the general term in the sum \eqref{p.01} is $\lesssim KM$, 
and the number of indices $i$ such that $x \in Q_i^*$ is bounded by $12^n$
by Proposition \ref{prop:cubes}. 
\end{proof}

\begin{lemma}
\emph{(Bound for $\| P_j(x,y) \|_{\sigma_\rho}$ at $\dist(x,F) \geq 1/2$).}
\label{lemma:Pj lontani}
One has 
\[
\| P_j(x,y) \|_{\sigma_\rho} 
\lesssim M |x-y|^{\rho - |j|} 
\quad \ 
\forall |j| \leq k, 
\ \ y \in F, 
\ \ x \in \Om, \ \ \dist(x,F) \geq 1/2.
\]
\end{lemma}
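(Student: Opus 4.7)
The plan is to estimate $P_j(x,y)$ term by term using its explicit definition \eqref{def Pj}, without invoking smoothing at all: since $y \in F$, the estimate \eqref{0903.1.normalized} applies directly to each $f^{(j+\ell)}(y)$, and the only work is in converting the resulting sum of powers $|x-y|^{|\ell|}$ into the single desired factor $|x-y|^{\rho-|j|}$, exploiting the hypothesis $\dist(x,F) \geq 1/2$.

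More concretely, I would first apply the triangle inequality to
\[
P_j(x,y) = \sum_{|j+\ell|\leq k} \frac{1}{\ell!}\, f^{(j+\ell)}(y)\,(x-y)^\ell
\]
to get $\| P_j(x,y) \|_{\sigma_\rho} \leq \sum_{|j+\ell|\leq k} \frac{1}{\ell!}\, \| f^{(j+\ell)}(y) \|_{\sigma_\rho} |x-y|^{|\ell|}$. Since $|j+\ell|\leq k$, definition \eqref{def sj} yields $\sigma_\rho \leq \sigma_{|j+\ell|}$, so by the continuous inclusion \eqref{0303.1} and then the pointwise bound \eqref{0903.1.normalized},
\[
\| f^{(j+\ell)}(y) \|_{\sigma_\rho} \leq \| f^{(j+\ell)}(y) \|_{\sigma_{|j+\ell|}} \leq M.
\]

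The only remaining step is the power comparison. Because $y\in F$ and $\dist(x,F)\geq 1/2$, one has $|x-y|\geq 1/2$; and because $|j+\ell|\leq k<\rho$, the exponent $|\ell|-(\rho-|j|)$ is strictly negative. Writing
\[
|x-y|^{|\ell|} = |x-y|^{\rho-|j|} \cdot |x-y|^{|\ell|-\rho+|j|}
\]
and using $|x-y|\geq 1/2$ on the (negatively exponentiated) second factor gives
\[
|x-y|^{|\ell|-\rho+|j|} \leq 2^{\rho-|\ell|-|j|} \leq 2^{k+1},
\]
so every term in the sum is bounded by $\frac{2^{k+1}}{\ell!}\, M\, |x-y|^{\rho-|j|}$. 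Summing the finitely many multi-indices $\ell$ with $|j+\ell|\leq k$ absorbs the factorials into the implicit constant, yielding the desired bound $\| P_j(x,y) \|_{\sigma_\rho} \lesssim M |x-y|^{\rho-|j|}$.

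There is no real obstacle here: the argument is entirely elementary, bypassing the smoothing operators (so the constant $K$ does not appear), and the bound $\dist(x,F)\geq 1/2$ is used only in the last, purely scalar step to trade the low powers of $|x-y|$ against the target power $\rho-|j|$.
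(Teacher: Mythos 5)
Your proof is correct and follows essentially the same route as the paper's: bound each term by $M$ via the inclusion $E_{\sigma_{|j+\ell|}} \hookrightarrow E_{\sigma_\rho}$ and the pointwise bound \eqref{0903.1.normalized}, then use $|x-y| \geq \dist(x,F) \geq 1/2$ together with $\rho - |j| - |\ell| \geq 0$ to absorb the powers $|x-y|^{|\ell|}$ into $|x-y|^{\rho-|j|}$. Nothing is missing.
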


\begin{proof} 
By \eqref{def Pj}, \eqref{0303.1}, \eqref{0903.1.normalized}, 
\[
\| P_j(x,y) \|_{\sigma_\rho} 
\lesssim M \sum_{|j+\ell| \leq k} |x-y|^{|\ell|}.
\]
Now $|x-y| \geq \dist(x,F)$ by definition of distance, 
and $\dist(x,F) \geq 1/2$ by assumption. 
Hence $(2|x-y|)^{\rho-|j|-|\ell| } \geq 1$ 
because $\rho-|j|-|\ell| \geq 0$, whence 
$|x-y|^{|\ell|} \lesssim |x-y|^{\rho - |j|}$.
\end{proof}

\begin{lemma}
\emph{(Bound for $\| g^{(j)}(x) - P_j(x,y) \|_{\sigma_\rho}$ at $\dist(x,F) \geq 1/2$).}
\label{lemma:Rj lontani}
One has 
\begin{align*}
& \| g^{(j)}(x) - P_j(x,y) \|_{\sigma_\rho} 
\lesssim K M |x-y|^{\rho-|j|} 
\\ 
& \forall |j| \leq k, \ \ 
y \in F, \ \ 
x \in \Om, \ \ 
\dist(x,F) \geq 1/2.
\end{align*}
\end{lemma}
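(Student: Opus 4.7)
The plan is to bootstrap directly from Lemmas \ref{lemma:cubi lontani}, \ref{lemma:cubi intermedi}, and \ref{lemma:Pj lontani} by splitting the assumption $\dist(x,F) \geq 1/2$ into the two regimes already handled: the ``far'' regime $\dist(x,F) > 6$ and the ``intermediate'' regime $1/2 \leq \dist(x,F) \leq 6$.

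In the far regime, Lemma \ref{lemma:cubi lontani} gives $g^{(j)}(x) = 0$, so the quantity $\|g^{(j)}(x) - P_j(x,y)\|_{\sigma_\rho}$ reduces to $\|P_j(x,y)\|_{\sigma_\rho}$, and Lemma \ref{lemma:Pj lontani} immediately gives the desired bound $\lesssim M|x-y|^{\rho-|j|}$ (note that this is even better than what is claimed, since it does not pick up a factor of $K$).

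In the intermediate regime, I would apply the triangle inequality and use Lemma \ref{lemma:cubi intermedi} to get $\|g^{(j)}(x)\|_{\sigma_\rho} \lesssim KM$ and Lemma \ref{lemma:Pj lontani} to get $\|P_j(x,y)\|_{\sigma_\rho} \lesssim M|x-y|^{\rho-|j|}$. The only thing to observe is that the constant term $KM$ must be absorbed into $KM|x-y|^{\rho-|j|}$. This uses the fact that since $y \in F$ we have $|x-y| \geq \dist(x,F) \geq 1/2$, and since the exponent satisfies $0 \leq \rho - |j| \leq k+1$ (with $k,n$ fixed) the quantity $|x-y|^{\rho-|j|}$ is bounded below by a constant depending only on $k$; thus $1 \lesssim |x-y|^{\rho-|j|}$. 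Combined with $K \geq 1$, this yields $\|g^{(j)}(x) - P_j(x,y)\|_{\sigma_\rho} \lesssim KM|x-y|^{\rho-|j|}$.

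There is no real obstacle in this proof: it is a straightforward patchwork of previously established estimates, and the only subtlety is the trivial rescaling trick in the intermediate regime that converts a $O(1)$ bound into an $O(|x-y|^{\rho-|j|})$ bound using the lower bound $|x-y| \geq 1/2$. The lemma is precisely the technical bridge that allows Taylor remainders to be estimated even when $x \in \Omega$ is not close to $F$, complementing the estimates in subsection \ref{subsec:est.close} for the nearby case.
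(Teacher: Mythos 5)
Your proof is correct and is essentially the paper's own argument: the paper combines Lemmas \ref{lemma:cubi lontani}, \ref{lemma:cubi intermedi} and \ref{lemma:Pj lontani} via the triangle inequality in a single line and then absorbs the constant using $1 \leq (2|x-y|)^{\rho-|j|}$, which is exactly your lower bound $|x-y| \geq \dist(x,F) \geq 1/2$ together with $\rho - |j| \geq 0$. Your explicit split into the two regimes $\dist(x,F) > 6$ and $1/2 \leq \dist(x,F) \leq 6$ is just a slightly more verbose presentation of the same patchwork.
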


\begin{proof}
By the estimates 
in Lemmas \ref{lemma:cubi lontani}, \ref{lemma:cubi intermedi}, 
\ref{lemma:Pj lontani},
\[
\| g^{(j)}(x) - P_j(x,y) \|_{\sigma_\rho} 
\leq \| g^{(j)}(x) \|_{\sigma_\rho} + \| P_j(x,y) \|_{\sigma_\rho} 
\lesssim KM (1 + |x-y|^{\rho-|j|}).
\]
Moreover $1 \leq (2|x-y|)^{\rho-|j|}$ 
because $\rho-|j| \geq 0$ and $2|x-y| \geq 2 \dist(x,F) \geq 1$. 
\end{proof}

\begin{lemma}
\emph{(Bound for $\| \pa_x^j g^{(0)}(x) \|_{\sigma_\rho}$ 
of order $|j|=k+1$ at $1/2 \leq \dist(x,F) \leq 6$).}
\label{lemma:der.k+1.far}
One has 
\[
\| \pa_x^j g^{(0)}(x) \|_{\sigma_\rho} \lesssim K M
\quad \ \forall |j| = k+1, 
\ \ x \in \Om, 
\ \ 1/2 \leq \dist(x,F) \leq 6.
\]
\end{lemma}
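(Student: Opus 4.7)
The plan is to imitate closely the argument already used in Lemma \ref{lemma:cubi intermedi}, but starting from the formula \eqref{der.k+1} for the order-$(k+1)$ derivatives instead of \eqref{tutti.1}. The only structural difference between those two formulas is that in \eqref{der.k+1} the summation over $m$ runs over $0 \leq m < j$ (instead of $0 \leq m \leq j$), and that now $|j| = k+1$.

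First, I would expand each $P_m(x,p_i)$ in \eqref{der.k+1} using definition \eqref{def Pj}, so as to rewrite
\[
\pa_x^j g^{(0)}(x) = \sum_{0 \leq m < j} \binom{j}{m} \sum_{i \in \mN} \sum_{|m+\ell| \leq k} \frac{1}{\ell!}\, S_{\theta_i}[f^{(m+\ell)}(p_i)]\, (x-p_i)^\ell\, \pa_x^{j-m}\ph_i^*(x),
\]
and then bound term by term. Since $|j|=k+1$ while $|m+\ell|\leq k$, one always has $\s_\rho \leq \s_{m+\ell}$, so by \eqref{S1}, \eqref{def K} and \eqref{0303.1} I get
\[
\| S_{\theta_i}[f^{(m+\ell)}(p_i)]\|_{\s_\rho} \leq K \| f^{(m+\ell)}(p_i)\|_{\s_{m+\ell}} \leq KM,
\]
using \eqref{0903.1.normalized} in the last step. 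This is the same step that worked in Lemma \ref{lemma:cubi intermedi}, and it is the reason why no smoothing-parameter issue arises at the $\s_\rho$ level: the target regularity is lower than every available source regularity.

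Next I would use the geometric estimates on $Q_i^*$: when $\pa_x^{j-m}\ph_i^*(x) \neq 0$ one has $x \in Q_i^*$, so by \eqref{dist.xpi.q}, $|x-p_i| \lesssim q_i$, and by \eqref{POU.der}, $|\pa_x^{j-m}\ph_i^*(x)| \lesssim q_i^{|m|-|j|}$. Hence the generic term is $\lesssim KM\, q_i^{|\ell|+|m|-|j|}$. The total exponent $|\ell|+|m|-|j|$ lies in $[-(k+1), -1]$, so is negative and bounded in absolute value by $k+1$. The hypothesis $1/2 \leq \dist(x,F)$ combined with \eqref{dist.xF.q} gives $q_i \geq 1/12$; together with $q_i \leq 1$ (since $i \in \mN$), this forces $q_i^{|\ell|+|m|-|j|} \lesssim 1$ with an implicit constant depending only on $k,n$.

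Finally, I would conclude by observing that by Proposition \ref{prop:cubes} the number of indices $i$ for which $x \in Q_i^*$ is bounded by $12^n$, and the number of multi-indices $(m,\ell)$ contributing to the sum is bounded in terms of $k,n$ alone. There is no real obstacle here: the argument is a direct transposition of the proof of Lemma \ref{lemma:cubi intermedi}, with the sole mild point to check being that the negative exponents $|\ell|+|m|-|j|$ are harmless because of the uniform lower bound $q_i \geq 1/12$ supplied by the hypothesis $\dist(x,F)\geq 1/2$.
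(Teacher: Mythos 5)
Your proposal is correct and coincides with the paper's own proof, which likewise starts from \eqref{der.k+1} and then follows the proof of Lemma \ref{lemma:cubi intermedi} word for word, using $\sigma_\rho \leq \sigma_{m+\ell}$, the bounds $|x-p_i|\lesssim q_i$ and $|\pa_x^{j-m}\ph_i^*(x)|\lesssim q_i^{|m|-|j|}$, the two-sided bound $1/12 \leq q_i \leq 1$, and the $12^n$ overlap bound. No gaps.
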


\begin{proof}
For $1/2 \leq \dist(x,F) \leq 6$, for $|j| = k+1$, 
by \eqref{der.k+1} one has 
\[
\| \pa_x^j g^{(0)}(x) \|_{\sigma_\rho} 
\lesssim \sum_{0 \leq m < j} 
\sum_{i \in \mN} 
\sum_{|m+\ell| \leq k} 
\| S_{\theta_i}[ f^{(m+\ell)}(p_i)] \, \|_{\sigma_\rho} 
|x-p_i|^{|\ell|} |\pa_x^{j-m} \ph_i^*(x)|.
\]
Then we follow word-by-word the proof of Lemma \ref{lemma:cubi intermedi},
with the only difference that now $m<j$ (instead of $m \leq j$) 
and $|j| = k+1$ (instead of $|j| \leq k$).
\end{proof}

\subsection{Formula for the difference of Taylor's polynomials}
\label{subsec:P-P j}

In the next subsections we will repeatedly make use of 
formula \eqref{P-P j} for the difference of two Taylor's polynomials. 
Such a formula is given by the Lemma on page 177 of \cite{S}. 
For the sake of completeness, 
we give here a (slightly different, and more explicit) proof.

\begin{lemma} \label{lemma:polinomi}
\emph{$($Formula for $P_j(x,y) - P_j(x,z))$}.
For all $x \in \R^n$, $y,z \in F$, $|j| \leq k$, one has 
\begin{equation} \label{P-P j}
P_j(x,y) - P_j(x,z) = \sum_{|j+\ell| \leq k} \frac{1}{\ell!} R_{j+\ell}(y,z) (x-y)^\ell.
\end{equation}
\end{lemma}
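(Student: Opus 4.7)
The plan is to rewrite $P_j(x,z)$ by writing $x-z = (x-y) + (y-z)$, expanding the powers $(x-z)^\ell$ via the multi-index binomial formula, and then reassembling the terms so that the coefficient of $(x-y)^\ell$ is itself a Taylor polynomial $P_{j+\ell}(y,z)$. Writing each such polynomial as $P_{j+\ell}(y,z) = f^{(j+\ell)}(y) - R_{j+\ell}(y,z)$ will split $P_j(x,z)$ into a piece that matches $P_j(x,y)$ exactly and a remainder-piece that produces the right-hand side of \eqref{P-P j}.

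Concretely, from the definition \eqref{def Pj} I would write
\[
P_j(x,z) = \sum_{|j+\ell| \leq k} \frac{1}{\ell!}\, f^{(j+\ell)}(z) \bigl((x-y) + (y-z)\bigr)^\ell,
\]
and apply the multi-index binomial identity $(u+v)^\ell = \sum_{m \leq \ell} \binom{\ell}{m} u^m v^{\ell-m}$ to expand $\bigl((x-y)+(y-z)\bigr)^\ell$. Using $\binom{\ell}{m}/\ell! = 1/(m!(\ell-m)!)$ and interchanging the order of summation (with the change of summation index $\ell' := \ell - m$, so that $\ell = m + \ell'$ and the constraint $|j+\ell| \leq k$ becomes $|j+m+\ell'| \leq k$) yields
\[
P_j(x,z) = \sum_{|j+m| \leq k} \frac{(x-y)^m}{m!} \sum_{|(j+m)+\ell'| \leq k} \frac{1}{\ell'!}\, f^{(j+m+\ell')}(z)(y-z)^{\ell'}.
\]
The inner sum is precisely $P_{j+m}(y,z)$ by \eqref{def Pj}.

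To conclude, I substitute $P_{j+m}(y,z) = f^{(j+m)}(y) - R_{j+m}(y,z)$ into the previous display. The contribution of $f^{(j+m)}(y)$ reassembles into $P_j(x,y)$ by \eqref{def Pj} (applied at the point $y$), while the remainder terms contribute $-\sum_{|j+m| \leq k} \frac{1}{m!} R_{j+m}(y,z)(x-y)^m$. Rearranging and renaming $m \to \ell$ yields \eqref{P-P j}. No step poses a serious obstacle: the only care needed is keeping track of the summation ranges under the change of index, which is a routine bookkeeping issue.
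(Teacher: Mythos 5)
Your proposal is correct and follows essentially the same route as the paper's proof: split $x-z=(x-y)+(y-z)$, expand by the multi-index binomial theorem, re-index to identify the inner sum as $P_{j+m}(y,z)$, and then use $f^{(j+m)}(y)-P_{j+m}(y,z)=R_{j+m}(y,z)$. The only cosmetic difference is that the paper subtracts the resulting expression from $P_j(x,y)$ at the end rather than substituting $P_{j+m}(y,z)=f^{(j+m)}(y)-R_{j+m}(y,z)$ midstream, which is the same algebra.
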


\begin{proof}
Splitting $(x-z) = (x-y) + (y-z)$, one has 
\begin{align*}
P_j(x,z) 
& = \sum_{\ell: |j+\ell| \leq k} \frac{1}{\ell!} f^{(j+\ell)}(z) (x-z)^\ell
\\ 
& = \sum_{\ell:|j+\ell| \leq k} \frac{1}{\ell!} f^{(j+\ell)}(z) 
\sum_{0 \leq m \leq \ell} \binom{\ell}{m} (x-y)^m (y-z)^{\ell-m}
\\ 
& = \sum_{\begin{subarray}{c} \ell, m \in \N^n \\ 
|j+\ell| \leq k \\ 
m \leq \ell \end{subarray}} 
\frac{1}{m! (\ell-m)!} f^{(j+\ell)}(z) (x-y)^m (y-z)^{\ell-m} 
\qquad [\text{change } \ell = m + h]
\\
& = \sum_{\begin{subarray}{c} h, m \in \N^n \\ 
|j+m+h| \leq k \end{subarray}} 
\frac{1}{m! h!} f^{(j+m+h)}(z) (x-y)^m (y-z)^h 
\\
& = \sum_{m : |j+m| \leq k} \frac{1}{m!} (x-y)^m
\Big( \sum_{h : |j+m+h| \leq k} 
\frac{1}{h!} f^{(j+m+h)}(z)  (y-z)^h \Big) 
\\ 
& = \sum_{m:|j+m| \leq k} \frac{1}{m!} P_{j+m}(y,z) (x-y)^m.
\end{align*}
Hence 
\[
P_j(x,y) - P_j(x,z) 
= \sum_{|j+m| \leq k} \frac{1}{m!} f^{(j+m)}(y) (x-y)^m 
- \sum_{|j+m| \leq k} \frac{1}{m!} P_{j+m}(y,z) (x-y)^m.
\]
Now $P_{j+m}(y,z)$ is the Taylor polynomial of $f^{(j+m)}$ 
centered at $z$, hence the difference 
$f^{(j+m)}(y) - P_{j+m}(y,z)$ is (by its definition) the remainder $R_{j+m}(y,z)$.
\end{proof}

\subsection{Decomposition by smoothing operators at points close to $F$}
\label{subsec:decomp}

Our aim now is to prove bounds like the ones in Lemmas \ref{lemma:Rj lontani},
\ref{lemma:cubi intermedi sj} also for $x$ close to $F$. 
We need first to decompose both the difference 
$(g^{(j)}(x) - P_j(x,y))$ and the function $g^{(j)}(x)$  
by means of the smoothing operators $S_\theta$.
We have to introduce two different decompositions
in Lemmas \ref{lemma:ZjmLH} and \ref{lemma:trick theta x};
the reason is discussed in Remark \ref{rem:two decomp}.

\begin{lemma}
\emph{(Decomposition of the difference $(g^{(j)}(x) - P_j(x,y))$ at $\dist(x,F) < 1/2$).}
\label{lemma:ZjmLH}
One has 
\begin{align*}
& g^{(j)}(x) - P_j(x,y) 
= \sum_{0 \leq m \leq j} \binom{j}{m} \Big( Z_{jm}^L(x,y) - Z_{jm}^H(x,y) \Big)
\\ 
& \forall |j| \leq k, \ \ 
y \in F, \ \ 
x \in \Om, \ \ 
\dist(x,F) < 1/2,
\end{align*}
where 
\begin{align} 
\label{def.ZjmL}
Z_{jm}^L(x,y) 
& := \sum_{i \in \mN} S_{\theta_i} [ P_m(x,p_i) - P_m(x,y)] \pa_x^{j-m} \ph_i^*(x),
\\
Z_{jm}^H(x,y) 
& := \sum_{i \in \mN} (I - S_{\theta_i}) [P_m(x,y)] \pa_x^{j-m} \ph_i^*(x).
\label{def.ZjmH}
\end{align}
\end{lemma}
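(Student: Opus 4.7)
The plan is to establish the decomposition by a purely algebraic manipulation, starting from the formula \eqref{tutti.1} derived in subsection \ref{subsec:prime formule} for $g^{(j)}(x)$ on $\Om$. The key auxiliary input is Lemma \ref{lemma:cubi vicini}: since $\dist(x,F) < 1/2$, the partition-of-unity relations \eqref{1.0} are available, namely $\sum_{i \in \mN} \ph_i^*(x) = 1$ and $\sum_{i \in \mN} \pa_x^\ell \ph_i^*(x) = 0$ for $\ell \neq 0$.

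First, I would use \eqref{1.0} to rewrite $P_j(x,y)$ in a form structurally compatible with the sum \eqref{tutti.1}. Since $\sum_{i \in \mN} \pa_x^{j-m} \ph_i^*(x)$ equals $1$ for $m = j$ and $0$ for $m < j$, one has the trivial identity
\[
P_j(x,y) = \sum_{0 \leq m \leq j} \binom{j}{m} \sum_{i \in \mN} P_m(x,y) \, \pa_x^{j-m} \ph_i^*(x).
\]
Subtracting this from \eqref{tutti.1} term-by-term gives
\[
g^{(j)}(x) - P_j(x,y) = \sum_{0 \leq m \leq j} \binom{j}{m} \sum_{i \in \mN} \bigl( S_{\theta_i}[P_m(x,p_i)] - P_m(x,y) \bigr) \pa_x^{j-m} \ph_i^*(x).
\]

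The second step is to introduce $S_{\theta_i}[P_m(x,y)]$ as a pivot and split, using the linearity of $S_{\theta_i}$:
\[
S_{\theta_i}[P_m(x,p_i)] - P_m(x,y) = S_{\theta_i}\bigl[P_m(x,p_i) - P_m(x,y)\bigr] - (I - S_{\theta_i})\bigl[P_m(x,y)\bigr].
\]
The first summand, once reassembled over $i$ and $m$, is exactly $Z_{jm}^L(x,y)$ as in \eqref{def.ZjmL}; the second is $Z_{jm}^H(x,y)$ as in \eqref{def.ZjmH}. This yields the claimed decomposition.

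There is no real obstacle here; the statement is essentially bookkeeping and relies only on linearity of $S_{\theta_i}$ and the two partition-of-unity identities. All sums over $i$ are locally finite in a neighborhood of $x$ (see \eqref{locally finite}), so the manipulations and the insertion of the pivot $S_{\theta_i}[P_m(x,y)]$ inside an individual summand are justified. The substantive content of the decomposition will only appear in the next lemmas, where $Z^L$ and $Z^H$ have to be estimated separately: the ``low-frequency'' piece $Z^L$ will be controlled by rewriting $P_m(x,p_i) - P_m(x,y)$ via the polynomial-difference formula \eqref{P-P j} to expose Taylor's remainders $R_{m+\ell}(p_i,y)$, while the ``high-frequency'' piece $Z^H$ will exploit the smoothing estimate \eqref{S2} to trade regularity for smallness in the parameter $\theta_i = q_i^{-\tau}$.
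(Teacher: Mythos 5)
Your proof is correct and follows essentially the same route as the paper: both start from \eqref{tutti.1}, use Lemma \ref{lemma:cubi vicini} to write $P_j(x,y) = \sum_{0 \leq m \leq j} \binom{j}{m} \sum_{i \in \mN} P_m(x,y)\,\pa_x^{j-m}\ph_i^*(x)$, and then split via $I = S_{\theta_i} + (I - S_{\theta_i})$; you merely perform the subtraction before the splitting rather than after, which is the same algebra in a different order.
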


\begin{proof} 
Let $x,y,j$ be like in the statement.  
Then, by Lemma \ref{lemma:cubi vicini}, 
\[
P_j(x,y) = P_j(x,y) \Big( \sum_{i \in \mN} \ph_i^*(x) \Big) 
= \sum_{i \in \mN} P_j(x,y) \ph_i^*(x)
\]
and, if $j$ is nonzero and $0 \leq m<j$,  
\[
0 = P_m(x,y) \Big( \sum_{i \in \mN} \pa_x^{j-m} \ph_i^*(x) \Big) 
= \sum_{i \in \mN} P_m(x,y) \pa_x^{j-m} \ph_i^*(x).
\]
Hence 
\begin{equation} \label{jm bino}
P_j(x,y) = \sum_{0 \leq m \leq j} \binom{j}{m} 
\sum_{i \in \mN} P_m(x,y) \pa_x^{j-m} \ph_i^*(x).
\end{equation}
Formula \eqref{jm bino} holds also for $j=0$. 
Splitting $I = S_{\theta_i} + (I - S_{\theta_i})$, 
we get
\begin{align} 
P_j(x,y) 
& = \sum_{0 \leq m \leq j} \binom{j}{m} 
\sum_{i \in \mN} S_{\theta_i} [ P_m(x,y) ] \pa_x^{j-m} \ph_i^*(x)
\notag \\ 
& \quad \ 
+ \sum_{0 \leq m \leq j} \binom{j}{m} 
\sum_{i \in \mN} (I - S_{\theta_i}) [ P_m(x,y) ] \pa_x^{j-m} \ph_i^*(x).
\label{jm banana}
\end{align}
We use identity \eqref{tutti.1} for $g^{(j)}(x)$ 
and \eqref{jm banana} for $P_j(x,y)$, and subtract. 
\end{proof}

\begin{lemma}
\emph{(Decomposition of $g^{(j)}(x)$ at $\dist(x,F) < 1/2$).}
\label{lemma:trick theta x}
One has 
\begin{align*}
& g^{(j)}(x)  
= G_j(x) + \sum_{0 \leq m < j} \binom{j}{m} \Big( Z_{jm}^L(x,y) + W_{jm}^H(x,y) \Big)
\\ 
& \forall |j| \leq k, \ \ 
y \in F, \ \ 
x \in \Om, \ \ 
\dist(x,F) < 1/2,
\end{align*}
where $Z_{jm}^L(x,y)$ is defined in Lemma \ref{lemma:ZjmLH},
\begin{align}
G_j(x) 
& := \sum_{i \in \mN} S_{\theta_i} [P_j(x,p_i)] \ph_i^*(x),
\label{def Gj} 
\\
W_{jm}^H(x,y) 
& := \sum_{i \in \mN} (S_{\theta_i} - S_{\theta_x}) [P_m(x,y)] \pa_x^{j-m} \ph_i^*(x),
\label{def WjmH} 
\\ 
\theta_x 
& := \Big( \frac{6}{\dist(x,F)} \Big)^\tau,
\label{def theta x}
\end{align}
and $\t$ is defined in \eqref{def theta}.
\end{lemma}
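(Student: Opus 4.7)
The plan is to start from the explicit formula \eqref{tutti.1} for $g^{(j)}(x)$, which already decomposes $g^{(j)}(x)$ as a binomial-weighted sum over $0 \le m \le j$ of the terms
\[
T_m(x) := \sum_{i \in \mN} S_{\theta_i}[P_m(x,p_i)] \, \pa_x^{j-m}\ph_i^*(x),
\]
where I have used that for $x \in \Om$ with $\dist(x,F) < 1/2$ only indices $i \in \mN$ contribute (Lemma \ref{lemma:cubi vicini}). The term $m=j$ is exactly $G_j(x)$ as defined in \eqref{def Gj}, since $\pa_x^0 \ph_i^* = \ph_i^*$. Thus the lemma will be proved once I show that, for each $0 \le m < j$,
\[
Z_{jm}^L(x,y) + W_{jm}^H(x,y) = T_m(x).
\]

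First I would add the definitions \eqref{def.ZjmL} and \eqref{def WjmH} and rearrange:
\[
Z_{jm}^L(x,y) + W_{jm}^H(x,y)
= \sum_{i \in \mN} \Bigl( S_{\theta_i}[P_m(x,p_i)] - S_{\theta_x}[P_m(x,y)] \Bigr) \pa_x^{j-m}\ph_i^*(x),
\]
where the cross-terms $\pm S_{\theta_i}[P_m(x,y)]\pa_x^{j-m}\ph_i^*(x)$ cancel between the two pieces. Then I would pull the $i$-independent vector $S_{\theta_x}[P_m(x,y)]$ out of the sum over $i$, obtaining
\[
Z_{jm}^L(x,y) + W_{jm}^H(x,y) = T_m(x) - S_{\theta_x}[P_m(x,y)] \sum_{i \in \mN} \pa_x^{j-m}\ph_i^*(x).
\]

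The key observation is that the index $j-m$ is a \emph{nonzero} multi-index because $m < j$, so the second statement in \eqref{1.0} of Lemma \ref{lemma:cubi vicini} applies and the last sum vanishes. Hence $Z_{jm}^L + W_{jm}^H = T_m(x)$ for every $0 \le m < j$, which combined with $T_j(x) = G_j(x)$ yields the claimed identity. There is no real obstacle here: the statement is essentially a bookkeeping identity built to exploit the cancellation $\sum_{i\in\mN}\pa_x^{j-m}\ph_i^*(x) = 0$, and the only hypothesis that is used nontrivially is $\dist(x,F) < 1/2$, which restricts the partition of unity to the subset $\mN$ and makes that cancellation available.
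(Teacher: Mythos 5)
Your proof is correct and takes essentially the same route as the paper: both start from \eqref{tutti.1}, identify the $m=j$ term with $G_j(x)$, and exploit the cancellation $\sum_{i\in\mN}\pa_x^{j-m}\ph_i^*(x)=0$ for $m<j$ (Lemma \ref{lemma:cubi vicini}) applied to the $i$-independent vector $S_{\theta_x}[P_m(x,y)]$. The paper builds the identity by adding a cleverly written zero and splitting $S_{\theta_x}=S_{\theta_i}+(S_{\theta_x}-S_{\theta_i})$, whereas you verify it by expanding the right-hand side; this is the same algebraic content.
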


\begin{proof} 
Let $x,y,j$ be like in the statement.  
Separating $m=j$ from $m<j$ in formula \eqref{tutti.1}, one has  
\begin{equation} \label{fu.aux.Gj}
g^{(j)}(x) 
= G_j(x) + \sum_{0 \leq m < j} \binom{j}{m} 
\sum_{i \in \mN} S_{\theta_i} [P_m(x,p_i)] \pa_x^{j-m} \ph_i^*(x).
\end{equation}
For $0 \leq m<j$, by Lemma \ref{lemma:cubi vicini}, 
\[
0 = S_{\theta_x} [P_m(x,y)] \Big( \sum_{i \in \mN} \pa_x^{j-m} \ph_i^*(x) \Big) 
= \sum_{i \in \mN} S_{\theta_x} [P_m(x,y)] \pa_x^{j-m} \ph_i^*(x).
\]
Thus, splitting $S_{\theta_x} = S_{\theta_i} + (S_{\theta_x} - S_{\theta_i})$
and taking the sum over $0 \leq m < j$, one has 
\begin{align} 
0 & = \sum_{0 \leq m < j} \binom{j}{m} \sum_{i \in \mN} 
S_{\theta_i} [P_m(x,y)] \pa_x^{j-m} \ph_i^*(x)
\notag \\ & \quad \  
+ \sum_{0 \leq m < j} \binom{j}{m} \sum_{i \in \mN} 
(S_{\theta_x} - S_{\theta_i}) [P_m(x,y)] \pa_x^{j-m} \ph_i^*(x).
\label{aux.theta.x}
\end{align}
The difference of \eqref{fu.aux.Gj} and \eqref{aux.theta.x} 
gives the lemma. 
\end{proof}

\begin{remark} (\emph{Why two different decompositions 
in Lemmas \ref{lemma:ZjmLH}, \ref{lemma:trick theta x}}).
\label{rem:two decomp}
We discuss why we need both the decompositions introduced in the last two lemmas. 

The decomposition of Lemma \ref{lemma:ZjmLH} is the natural one, 
as it is the combination of the corresponding formula in \cite{S}
with the splitting $S_{\theta_i} + (I - S_{\theta_i})$ 
given by the smoothing operators adapted to the partition of unity, 
and it is obtained by the same ingredients 
used to define $g$ in subsection \ref{subsec:def g}. 
We use this decomposition in the proof of Lemma \ref{lemma:piano.1}.

In \cite{S} the estimate for the difference $g^{(j)}(x) - P_j(x,y)$ 
is used, with triangular inequality, 
to obtain directly (and almost trivially) a bound 
for $g^{(j)}(x)$, which corresponds to our Lemma \ref{lemma:piano.4}. 
However, trying to prove Lemma \ref{lemma:piano.4} in the same way 
(namely $\| g^{(j)}(x) \|_{\sigma_j} 
\leq \| g^{(j)}(x) - P_j(x,y) \|_{\sigma_j} + \| P_j(x,y) \|_{\sigma_j}$, 
and using the decomposition of Lemma \ref{lemma:ZjmLH} 
to estimate the difference $\| g^{(j)}(x) - P_j(x,y) \|_{\sigma_j}$), 
we encounter the following ``regularity trouble'':
for multi-indices $|m+\ell| > |j|$ one has $s_{m+\ell} < \sigma_j$, 
and the corresponding terms $(I - S_{\theta_i}) f^{(m+\ell)}(y)$ 
in $Z_{jm}^H(x,y)$ do not belong, in general, to $E_{\sigma_j}$
(the smoothing inequality \eqref{S2} does not help here, 
as it goes in the ``wrong'' direction).  

As a second natural attempt, then, we try to estimate $g^{(j)}(x)$ as it is 
(i.e., without adding and subtracting $P_j(x,y)$), 
but we encounter a ``power trouble'': 
for $|m+\ell| < |j|$ we have terms with small factors $|x-p_i|^{|\ell|}$ 
that are not sufficiently small 
to compensate the small denominators $q_i^{-|j|+|m|}$ 
coming from $\pa_x^{j-m} \ph_i^*(x)$.  

In other words, we face a problem of ``lack of regularity'' on the one side, 
and a problem of ``lack of smallness'' on the other side. 

We overcome this difficulty by introducing the decomposition 
in Lemma \ref{lemma:trick theta x}, which gives 
all the necessary smallness to compensate the small denominators 
(thanks to the fact that essentially we are still considering 
the difference between $g^{(j)}$ and its Taylor polynomial $P_j$)
and also has all the sufficient regularity to complete the estimate 
in the required norm (thanks to the smoothing operator $S_{\theta_x}$). 

Note that this troubles are not present 
in \cite{S} nor in the case of Definition \ref{def Lip Y},
where the norm $\| \ \|_Y$ is only one.

At this point one could wonder why not using 
such a useful smoothing $S_{\theta_x}$ 
from the beginning, already in the definition of $g$.
The reason is that $\theta_x$ in \eqref{def theta x} 
is defined in terms of $\dist(x,F)$,  
and, in general, the map $x \mapsto \dist(x,F)$ is merely Lipschitz,
while our goal is to construct an extension $g$ of $f$ 
that preserves the regularity in $x$; except for the case $k=0$,
such regularity is higher than just Lipschitz, 
therefore $\theta_x$ must be avoided to define $g$.
In fact, the decomposition of $\Om$ in dyadic cubes 
and the corresponding partition of unity is  
introduced in \cite{S} (and, before, by Whitney himself \cite{W-anal}) 
precisely to replace $\dist(x,F)$ with a smooth way 
to approach $F$ coming from $\Om$.  
\end{remark}

In the same way as in Lemma \ref{lemma:ZjmLH}, 
we give a decomposition of any partial derivative of $g^{(0)}(x)$ 
of order $k+1$.

\begin{lemma}
\emph{(Decomposition of $\pa_x^j g^{(0)}(x)$ or order $|j|=k+1$ 
at $\dist(x,F) < 1/2$).}
\label{lemma:pappa}
One has 
\begin{align*}
& \pa_x^j g^{(0)}(x) 
= \sum_{0 \leq m < j} \binom{j}{m} \Big( Z_{jm}^L(x,y) - Z_{jm}^H(x,y) \Big)
\\ 
& \forall |j| = k+1, \ \ 
y \in F, \ \ 
x \in \Om, \ \ 
\dist(x,F) < 1/2,
\end{align*}
where $Z_{jm}^L(x,y)$, $Z_{jm}^H(x,y)$ are defined 
by formulas \eqref{def.ZjmL}-\eqref{def.ZjmH}. 
\end{lemma}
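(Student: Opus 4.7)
The plan is to mirror the proof of Lemma \ref{lemma:ZjmLH} almost verbatim, the only substantive difference being that we start from formula \eqref{der.k+1} (which expresses $\pa_x^j g^{(0)}(x)$ for $|j|=k+1$) in place of \eqref{tutti.1} (which is the analogous expression for $|j|\leq k$). Note that in \eqref{der.k+1} the summation over multi-indices $m$ is already restricted to $m<j$, which fits perfectly with what we want to prove, and for every such $m$ one has $|m|\leq k$, so that $P_m(x,y)$ is well defined via \eqref{def Pj}.

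First I would fix $x,y,j$ as in the statement and recall \eqref{der.k+1}:
\[
\pa_x^j g^{(0)}(x) = \sum_{0 \leq m < j} \binom{j}{m} \sum_{i \in \mN} S_{\theta_i}[P_m(x,p_i)] \, \pa_x^{j-m} \ph_i^*(x).
\]
Next, since $\dist(x,F)<1/2$, Lemma \ref{lemma:cubi vicini} gives $\sum_{i \in \mN} \pa_x^{j-m}\ph_i^*(x)=0$ for every nonzero multi-index $j-m$; because $m<j$ this is automatically satisfied. Multiplying this vanishing sum by the fixed vector $P_m(x,y) \in E_{a_0}$, splitting the identity $I = S_{\theta_i} + (I - S_{\theta_i})$ and summing over $0 \leq m < j$ with the binomial weights $\binom{j}{m}$, I obtain the identity
\[
0 = \sum_{0 \leq m < j} \binom{j}{m} \sum_{i \in \mN} S_{\theta_i}[P_m(x,y)]\, \pa_x^{j-m}\ph_i^*(x) + \sum_{0 \leq m < j} \binom{j}{m} \sum_{i \in \mN} (I-S_{\theta_i})[P_m(x,y)]\, \pa_x^{j-m}\ph_i^*(x).
\]
Subtracting this identity from the expression for $\pa_x^j g^{(0)}(x)$ obtained from \eqref{der.k+1}, the first block combines via linearity of $S_{\theta_i}$ into $\sum_{0 \leq m < j} \binom{j}{m}\, Z_{jm}^L(x,y)$ (recall \eqref{def.ZjmL}), while the second block contributes exactly $-\sum_{0 \leq m < j} \binom{j}{m}\, Z_{jm}^H(x,y)$ (recall \eqref{def.ZjmH}), yielding the claimed formula.

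There is no real obstacle: the argument is a direct adaptation of Lemma \ref{lemma:ZjmLH}, replacing \eqref{tutti.1} by \eqref{der.k+1} and noting that the condition $m<j$ (forced here by $|j|=k+1$ and $|m|\leq k$) is precisely what is needed to invoke the vanishing of $\sum_{i\in\mN}\pa_x^{j-m}\ph_i^*(x)$ from Lemma \ref{lemma:cubi vicini}. The only point that needs a brief justification is the local finiteness of the series so that termwise manipulations are legitimate, but this is guaranteed by \eqref{locally finite} and was already used to derive \eqref{der.k+1} in subsection \ref{subsec:prime formule}.
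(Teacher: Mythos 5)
Your proposal is correct and follows essentially the same route as the paper: start from \eqref{der.k+1}, use Lemma \ref{lemma:cubi vicini} to insert the vanishing sum $\sum_{i\in\mN} P_m(x,y)\,\pa_x^{j-m}\ph_i^*(x)$, split $I=S_{\theta_i}+(I-S_{\theta_i})$, and subtract. The paper's own proof is exactly this adaptation of Lemma \ref{lemma:ZjmLH}, so nothing is missing.
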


\begin{proof}  
{[Similar to the proof of Lemma \ref{lemma:ZjmLH}]}
Let $x,y,j$ be like in the statement, and let $0 \leq m < j$  
(therefore $|m| \leq k$). 
Since $j-m$ is nonzero, by Lemma \ref{lemma:cubi vicini} one has
\[
0 = P_m(x,y) \Big( \sum_{i \in \mN} \pa_x^{j-m} \ph_i^*(x) \Big) 
= \sum_{i \in \mN} P_m(x,y) \pa_x^{j-m} \ph_i^*(x).
\]
Hence 
\begin{equation} \label{jm senza testa}
0 = \sum_{0 \leq m < j} \binom{j}{m} 
\sum_{i \in \mN} P_m(x,y) \pa_x^{j-m} \ph_i^*(x)
\end{equation}
and, splitting $I = S_{\theta_i} + (I - S_{\theta_i})$, 
\begin{align} 
0 & = \sum_{0 \leq m < j} \binom{j}{m} 
\sum_{i \in \mN} S_{\theta_i} [ P_m(x,y) ] \pa_x^{j-m} \ph_i^*(x)
\notag \\ 
& \quad \ 
+ \sum_{0 \leq m < j} \binom{j}{m} 
\sum_{i \in \mN} (I - S_{\theta_i}) [ P_m(x,y) ] \pa_x^{j-m} \ph_i^*(x).
\label{jm banana senza testa}
\end{align}
We use identity \eqref{der.k+1} for $\pa_x^{j} g^{(0)}(x)$ 
and \eqref{jm banana senza testa}, and subtract. 
\end{proof}

\subsection{Estimates at points close to $F$}
\label{subsec:est.close}

We prove some estimates at points $\dist(x,F) \leq 1/2$, 
and we begin with the additional condition $|x-y| \leq 2 \dist(x,F)$. 

\begin{lemma}
\emph{(Bound for $\| g^{(j)}(x) - P_j(x,y) \|_{\sigma_\rho}$ 
at $\dist(x,F) < 1/2$, $|x-y| \leq 2 \dist(x,F)$).}
\label{lemma:piano.1}
One has
\begin{align} 
& \| g^{(j)}(x) - P_j(x,y) \|_{\sigma_\rho} 
\lesssim KM |x-y|^{\rho-|j|} 
\notag \\ 
& \forall |j| \leq k, \ \ 
y \in F, \ \ 
x \in \Om, \ \ 
\dist(x,F) < 1/2, \ \ 
|x-y| \leq 2 \dist(x,F).
\label{est.Rj} 
\end{align}
\end{lemma}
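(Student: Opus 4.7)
My plan is to start from the decomposition provided by Lemma \ref{lemma:ZjmLH}, which applies precisely in the regime $\dist(x,F)<1/2$, and estimate the two kinds of terms $Z_{jm}^L(x,y)$ and $Z_{jm}^H(x,y)$ separately in the norm $\|\cdot\|_{\sigma_\rho}$. The crucial preliminary observation is the following equivalence of scales: if $i\in\mN$ contributes to the sum, then $\pa_x^{j-m}\ph_i^*(x)\neq 0$ forces $x\in Q_i^*$, so by \eqref{dist.xF.q} we get $q_i/2\leq \dist(x,F)\leq 6q_i$. Combining with $y\in F$ (hence $|x-y|\geq\dist(x,F)$) and the hypothesis $|x-y|\leq 2\dist(x,F)$, this gives $q_i/2\leq|x-y|\leq 12 q_i$, namely $q_i\sim|x-y|$. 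Moreover $|p_i-y|\leq|p_i-x|+|x-y|\lesssim q_i$ by \eqref{dist.xpi.q}.

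For the term $Z_{jm}^H(x,y)$, I expand the polynomial $P_m(x,y)=\sum_{|m+\ell|\leq k}\frac{1}{\ell!}f^{(m+\ell)}(y)(x-y)^\ell$, apply $(I-S_{\theta_i})$ term by term, and estimate each $\|(I-S_{\theta_i})[f^{(m+\ell)}(y)]\|_{\sigma_\rho}$ using the smoothing bound \eqref{S2} with indices $a=\sigma_\rho$, $b=\sigma_{m+\ell}$. Since $\sigma_{m+\ell}-\sigma_\rho=(\rho-|m+\ell|)\delta$ and $\theta_i=q_i^{-1/\delta}$ by \eqref{def theta}, the smoothing factor is exactly $\theta_i^{-(\sigma_{m+\ell}-\sigma_\rho)}=q_i^{\rho-|m+\ell|}$; combined with \eqref{0903.1.normalized}, this gives $\|(I-S_{\theta_i})[f^{(m+\ell)}(y)]\|_{\sigma_\rho}\lesssim KM\, q_i^{\rho-|m+\ell|}$. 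Multiplying by $|x-y|^{|\ell|}\lesssim q_i^{|\ell|}$ and $|\pa_x^{j-m}\ph_i^*(x)|\lesssim q_i^{|m|-|j|}$ (from \eqref{POU.der}), the factors of $q_i$ combine to $q_i^{\rho-|j|}$, and then $q_i\lesssim|x-y|$ with $\rho-|j|>0$ yields $\lesssim KM|x-y|^{\rho-|j|}$.

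For the term $Z_{jm}^L(x,y)$, I apply Lemma \ref{lemma:polinomi} with $(y,z)\leftrightarrow(p_i,y)$ to write
\[
P_m(x,p_i)-P_m(x,y)=\sum_{|m+\ell|\leq k}\frac{1}{\ell!}R_{m+\ell}(p_i,y)(x-p_i)^\ell.
\]
Then \eqref{S1} with indices $a=b=\sigma_\rho$ and \eqref{def K} give $\|S_{\theta_i}[R_{m+\ell}(p_i,y)]\|_{\sigma_\rho}\leq K\|R_{m+\ell}(p_i,y)\|_{\sigma_\rho}$, which by \eqref{0903.2.normalized} is bounded by $KM|p_i-y|^{\rho-|m+\ell|}\lesssim KM\, q_i^{\rho-|m+\ell|}$. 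Multiplying by $|x-p_i|^{|\ell|}\lesssim q_i^{|\ell|}$ and $|\pa_x^{j-m}\ph_i^*(x)|\lesssim q_i^{|m|-|j|}$ again produces $q_i^{\rho-|j|}\lesssim|x-y|^{\rho-|j|}$. Finally, by Proposition \ref{prop:cubes} the number of indices $i$ with $x\in Q_i^*$ is bounded by $12^n$, so summing over $i$, $\ell$, and $0\leq m\leq j$ costs only a factor $\lesssim 1$.

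The main obstacle is the balancing inside $Z_{jm}^H$: a priori the Taylor monomial $f^{(m+\ell)}(y)(x-y)^\ell$ lives in $E_{\sigma_{m+\ell}}$, which for $|m+\ell|>|j|$ is \emph{smaller} than the target space $E_{\sigma_\rho}$ in terms of regularity, so one cannot use \eqref{0303.1} freely; the gain comes exclusively from the smoothing factor supplied by $(I-S_{\theta_i})$, and works only because $\theta_i$ was defined with the precise exponent $\tau=1/\delta$ in \eqref{def theta}, which makes the gain $q_i^{\rho-|m+\ell|}$ assemble with the polynomial powers and the derivatives of the partition of unity into the clean exponent $\rho-|j|$. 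Once this algebraic cancellation is recognized, the rest is bookkeeping using the cube-diameter estimates of Proposition \ref{prop:cubes}.
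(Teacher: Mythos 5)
Your proposal is correct and follows essentially the same route as the paper: the decomposition of Lemma \ref{lemma:ZjmLH}, Lemma \ref{lemma:polinomi} plus \eqref{S1} and \eqref{0903.2.normalized} for $Z_{jm}^L$, and the smoothing bound \eqref{S2} with the exponent identity $\theta_i^{-(\sigma_{m+\ell}-\sigma_\rho)}=q_i^{\rho-|m|-|\ell|}$ for $Z_{jm}^H$, all glued by the two-sided comparability $q_i\sim|x-y|$ and the $12^n$ bound on overlapping cubes. (The only cosmetic difference is that you carry the monomial $(x-p_i)^\ell$ from Lemma \ref{lemma:polinomi} while the paper writes $(x-y)^\ell$; since both are $\lesssim q_i^{|\ell|}$ in this regime, the estimates coincide.)
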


\begin{proof} 
With the notation of Lemma \ref{lemma:ZjmLH}, 
we begin with estimating $Z_{jm}^L(x,y)$. 
By Lemma \ref{lemma:polinomi}, one has 
\begin{equation} \label{P-P j copia}
P_m(x, p_i) - P_m(x,y) = \sum_{|m+\ell| \leq k} 
\frac{1}{\ell!} R_{m+\ell}(p_i,y) (x-y)^\ell.
\end{equation}
Therefore 
\[
\| Z_{jm}^L(x,y) \|_{\sigma_\rho} 
\lesssim \sum_{i \in \mN} \sum_{|m+\ell| \leq k}  
\| S_{\theta_i} [ R_{m+\ell}(p_i, y)] \, \|_{\sigma_\rho} 
|x-y|^{|\ell|} |\pa_x^{j-m} \ph_i^*(x)|.
\]
By \eqref{S1}, \eqref{def K}, 
$\| S_{\theta_i} [ R_{m+\ell}(p_i, y)] \, \|_{\sigma_\rho} 
\leq K \| R_{m+\ell}(p_i, y) \|_{\sigma_\rho}$. 
Since $p_i, y$ are both in $F$, one has, 
by \eqref{0903.2.normalized}, 
\[
\| R_{m+\ell}(p_i, y) \|_{\sigma_\rho} 
\leq M |p_i - y|^{\rho - |m| - |\ell|},
\]
while $\pa_x^{j-m} \ph_i^*(x)$ is estimated by \eqref{POU.der},
and it is nonzero only if $x \in Q_i^*$. 
Hence 
\begin{equation} \label{aux.1707}
\| Z_{jm}^L(x,y) \|_{\sigma_\rho} 
\lesssim K M \sum_{\begin{subarray}{c} i \in \mN \\ x \in Q_i^* \end{subarray}} 
\sum_{|m+\ell| \leq k}  
|p_i - y|^{\rho - |m| - |\ell|}  
|x-y|^{|\ell|} q_i^{|m| - |j|}.
\end{equation}
By assumption, $|x-y| \lesssim \dist(x,F)$, 
and, by \eqref{dist.xF.q}, $\dist(x,F) \lesssim q_i$. 
Therefore $|x-y| \lesssim q_i$ and
\begin{equation} \label{aux.2407.1}
q_i^{|m| - |j|} \lesssim |x-y|^{|m| - |j|}
\end{equation}
because $|m| - |j| \leq 0$
(recall that $m \leq j$ by Lemma \ref{lemma:ZjmLH}).
By \eqref{dist.xpi.q}, $|p_i - x| \lesssim q_i$, 
and,  by \eqref{dist.xy.q}, $q_i \lesssim |x-y|$.
Therefore $|p_i - x| \lesssim |x-y|$ 
and, by triangular inequality, 
$|p_i - y| \leq |p_i - x| + |x-y| \lesssim |x-y|$. 
Hence 
\begin{equation} \label{aux.2407.2}
|p_i - y|^{\rho - |m| - |\ell|}  
\lesssim  |x - y|^{\rho - |m| - |\ell|}  
\end{equation}
because $\rho - |m| - |\ell| \geq 0$. 
By \eqref{aux.2407.1}, \eqref{aux.2407.2}, 
the general term in the sum \eqref{aux.1707} is 
$\lesssim |x-y|^{\rho - |j|}$.  
Moreover the sum \eqref{aux.1707} has at most $12^n$ terms because, by Proposition \ref{prop:cubes}, 
there are at most $12^n$ indices $i$ such that $x \in Q_i^*$.   
Therefore 
\begin{equation} \label{est.ZjmL.aux}
\| Z_{jm}^L(x,y) \|_{\sigma_\rho} 
\lesssim K M |x-y|^{\rho - |j|}.
\end{equation}

Now we estimate $Z_{jm}^H(x,y)$. 
By \eqref{def Pj}, 
then \eqref{S2}, \eqref{def K}, 
and then \eqref{def sj}, \eqref{def theta}, 
\eqref{0903.1.normalized},
one has 
\begin{align*}
| (I-S_{\theta_i}) [P_m(x,y)]\, \|_{\sigma_\rho} 
& \lesssim \sum_{|m+\ell| \leq k} 
\| (I-S_{\theta_i}) [ f^{(m+\ell)}(y)]\, \|_{\sigma_\rho} |x-y|^{|\ell|}
\\
& \lesssim K \sum_{|m+\ell| \leq k} 
\theta_i^{-(\sigma_{m+\ell}-\sigma_\rho)} \| f^{(m+\ell)}(y) \|_{\sigma_{m+\ell}} |x-y|^{|\ell|}
\\
& \lesssim KM \sum_{|m+\ell| \leq k} 
q_i^{\rho - |m| - |\ell|} |x-y|^{|\ell|}.
\end{align*}
If $\pa_x^{j-m} \ph_i^*(x) \neq 0$, 
then $x \in Q_i^*$, and, by \eqref{POU.der}, 
$|\pa_x^{j-m} \ph_i^*(x)| \lesssim q_i^{-|j|+|m|}$. 
Thus, recalling the definition of $Z_{jm}^H(x,y)$, we get  
\[
\| Z_{jm}^H(x,y) \|_{\sigma_\rho} 
\lesssim K M \sum_{\begin{subarray}{c} i \in \mN \\ x \in Q_i^* \end{subarray}} 
\sum_{|m+\ell| \leq k} q_i^{\rho - |\ell| - |j|} |x-y|^{|\ell|}.
\]
As observed above in this proof, we have 
$q_i \lesssim |x-y|$ and also $|x-y| \lesssim q_i$. 
The number of $i$ such that $x \in Q_i^*$ is at most $12^n$.
Hence we obtain 
\begin{equation} \label{est.ZjmH.aux}
\| Z_{jm}^H(x,y) \|_{\sigma_\rho} 
\lesssim KM |x-y|^{\rho - |j|}.
\end{equation}
By Lemma \ref{lemma:ZjmLH} and bounds \eqref{est.ZjmL.aux}, \eqref{est.ZjmH.aux}
we get the thesis.
\end{proof}

\begin{lemma}
\emph{(Bound for $\| g^{(j)}(x) - P_j(x,y) \|_{\sigma_\rho}$ 
with $y \in F$, $\dist(x,F) < 1/2$, $|x-y| > 2 \dist(x,F)$).}
\label{lemma:piano.3}
One has 
\begin{align} 
& \| g^{(j)}(x) - P_j(x,y) \|_{\sigma_\rho} 
\lesssim K M |x-y|^{\rho-|j|} 
\label{est.Rj.gen} 
\\ 
& \forall |j| \leq k, \ \ 
y \in F, \ \ 
x \in \Om, \ \ 
\dist(x,F) < 1/2, \ \ 
|x-y| > 2 \dist(x,F).
\notag 
\end{align}
\end{lemma}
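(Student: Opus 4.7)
The plan is to reduce this lemma to the already-established Lemma \ref{lemma:piano.1} by inserting an intermediate point $z \in F$ that realizes the distance from $x$ to $F$, and then controlling the difference of Taylor polynomials $P_j(x,z) - P_j(x,y)$ via the identity proved in Lemma \ref{lemma:polinomi}.

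First I would pick $z \in F$ with $|x-z| = \dist(x,F)$; such a $z$ exists because $F$ is closed and the distance function is continuous, so the infimum is attained on the compact set $F \cap \overline{B}(x, \dist(x,F)+1)$. By hypothesis, $|x-z| = \dist(x,F) < 1/2$ and trivially $|x-z| \leq 2\dist(x,F)$, so Lemma \ref{lemma:piano.1} applies with $z$ in place of $y$ and yields
\[
\| g^{(j)}(x) - P_j(x,z) \|_{\sigma_\rho} \lesssim KM\, |x-z|^{\rho-|j|}.
\]
Since $|x-z| = \dist(x,F) < |x-y|/2 \leq |x-y|$ and $\rho - |j| \geq 0$, we can replace $|x-z|^{\rho-|j|}$ by $|x-y|^{\rho-|j|}$, giving the desired bound for this first piece.

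Next I would apply Lemma \ref{lemma:polinomi} (with the roles of $y$ and $z$ interchanged) to write
\[
P_j(x,z) - P_j(x,y) = -\sum_{|j+\ell| \leq k} \frac{1}{\ell!}\, R_{j+\ell}(y,z)\,(x-y)^\ell.
\]
Both $y$ and $z$ lie in $F$, so \eqref{0903.2.normalized} gives $\| R_{j+\ell}(y,z) \|_{\sigma_\rho} \leq M |y-z|^{\rho-|j|-|\ell|}$. By the triangle inequality $|y-z| \leq |x-y| + |x-z| \leq |x-y| + \tfrac12|x-y| = \tfrac32 |x-y|$, and since the exponent $\rho - |j| - |\ell| \geq 0$, we obtain $|y-z|^{\rho-|j|-|\ell|} \lesssim |x-y|^{\rho-|j|-|\ell|}$. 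Summing over the finitely many multi-indices $\ell$ with $|j+\ell| \leq k$,
\[
\| P_j(x,z) - P_j(x,y) \|_{\sigma_\rho} \lesssim M \sum_{|j+\ell|\leq k} |x-y|^{\rho-|j|-|\ell|} |x-y|^{|\ell|} \lesssim M |x-y|^{\rho-|j|}.
\]

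Combining the two pieces via the triangle inequality $g^{(j)}(x) - P_j(x,y) = [g^{(j)}(x) - P_j(x,z)] + [P_j(x,z) - P_j(x,y)]$ yields the claimed estimate. There is no real obstacle here: the argument is essentially bookkeeping with the triangle inequality, the key insight being that the nearest-point projection $z$ of $x$ onto $F$ falls inside the regime already handled by Lemma \ref{lemma:piano.1}, and the gap between $z$ and the original $y$ is controlled by $|x-y|$ since $|x-z| < |x-y|/2$.
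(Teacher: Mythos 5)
Your proof is correct and follows essentially the same route as the paper: introduce the nearest point $z\in F$ to $x$, apply Lemma \ref{lemma:piano.1} at $(x,z)$, control $P_j(x,z)-P_j(x,y)$ via Lemma \ref{lemma:polinomi} and \eqref{0903.2.normalized}, and conclude by the triangle inequality. The only (immaterial) difference is that you apply Lemma \ref{lemma:polinomi} in the orientation producing $(x-y)^\ell$ rather than $(x-z)^\ell$; both give the same bound.
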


\begin{proof}
Let $x,y,j$ be like in the statement. 
Consider a point $z \in F$ such that $|x-z| = \dist(x,F)$.  
Then Lemma \ref{lemma:piano.1} applies at the points $x,z$, 
and it gives the inequality 
\begin{equation} \label{z.pass.1}
\| g^{(j)}(x) - P_j(x,z) \|_{\sigma_\rho} 
\lesssim KM |x-z|^{\rho-|j|}.
\end{equation}
By Lemma \ref{lemma:polinomi}, 
\[
P_j(x, z) - P_j(x,y) = \sum_{|j+\ell| \leq k} \frac{1}{\ell!} R_{j+\ell}(z,y) (x-z)^\ell.
\]
Both $y,z \in F$, therefore, by \eqref{0903.2.normalized}, 
\[
\| P_j(x, z) - P_j(x,y) \|_{\sigma_\rho} 
\lesssim M \sum_{|j+\ell| \leq k} |z-y|^{\rho - |j| - |\ell|} |x-z|^{|\ell|}.
\]
Since $|x-z| = \dist(x,F) \leq |x-y|$, one has 
$|z-y| \leq |z-x| + |x-y| \lesssim |x-y|$,
whence
\begin{equation} \label{z.pass.2}
\| P_j(x, z) - P_j(x,y) \|_{\sigma_\rho} 
\lesssim M |x-y|^{\rho - |j|}.
\end{equation}
By \eqref{z.pass.1}, \eqref{z.pass.2} and triangular inequality 
we obtain \eqref{est.Rj.gen}.   
\end{proof}

\begin{lemma} \label{lemma:piano.4} 
\emph{(Bound for $\| g^{(j)}(x) \|_{\sigma_j}$ at $\dist(x,F) < 1/2$).}
One has
\begin{equation} \label{est.gj}
\| g^{(j)}(x) \|_{\sigma_j} \lesssim K_0 K M
\quad \ \forall |j| \leq k, \ \ x \in \Om, \ \ \dist(x,F) < 1/2.
\end{equation}
\end{lemma}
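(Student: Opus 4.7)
The plan is to apply the decomposition from Lemma \ref{lemma:trick theta x}: fix $x \in \Om$ with $\dist(x,F) < 1/2$ and choose a point $y \in F$ at which the distance to $F$ is attained, namely $|x-y| = \dist(x,F)$ (such a $y$ exists because $F$ is closed). Then
\[
g^{(j)}(x) = G_j(x) + \sum_{0 \leq m < j} \binom{j}{m} \bigl( Z_{jm}^L(x,y) + W_{jm}^H(x,y) \bigr),
\]
and I estimate each of the three terms separately in the $\| \cdot \|_{\sigma_j}$ norm. Throughout, I use Proposition \ref{prop:cubes} to bound the number of cubes $Q_i^*$ containing $x$ by $12^n$, the estimates $|x-p_i| \lesssim q_i$, $|x-y| = \dist(x,F) \lesssim q_i$ (hence $|p_i - y| \lesssim q_i$) for $x \in Q_i^*$, and $|\pa_x^{j-m}\ph_i^*(x)| \lesssim q_i^{|m|-|j|}$ from Proposition \ref{prop:partition of unity}.

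For $G_j(x)$, I expand $P_j(x,p_i)$ via \eqref{def Pj} and bound each term $S_{\theta_i}[f^{(j+\ell)}(p_i)](x-p_i)^\ell$ in $\| \cdot \|_{\sigma_j}$ using \eqref{S1}: for $\ell = 0$ this gives $K_0 M$ directly, while for $|\ell| \geq 1$ we have $\sigma_{j+\ell} < \sigma_j$ and \eqref{S1} yields the factor $\theta_i^{\sigma_j - \sigma_{j+\ell}} = q_i^{-|\ell|}$, which combined with $|x-p_i|^{|\ell|} \lesssim q_i^{|\ell|}$ cancels, giving $\lesssim K M$ per term. For $Z_{jm}^L(x,y)$, I apply Lemma \ref{lemma:polinomi} to rewrite the polynomial difference in terms of the remainders $R_{m+\ell}(p_i,y)$, which live in $E_{\sigma_\rho}$ with norm bounded by $M|p_i-y|^{\rho-|m+\ell|}$. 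Since $\sigma_\rho \leq \sigma_j$, inequality \eqref{S1} produces the factor $\theta_i^{\sigma_j - \sigma_\rho} = q_i^{|j|-\rho}$; using $|p_i-y| \lesssim q_i$ and $\rho - |m+\ell| \geq 0$, together with the factors $|x-y|^{|\ell|} \lesssim q_i^{|\ell|}$ and $q_i^{|m|-|j|}$, all powers of $q_i$ cancel, giving $\lesssim K M$ per term.

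The main obstacle, and the reason the decomposition of Lemma \ref{lemma:trick theta x} was introduced rather than that of Lemma \ref{lemma:ZjmLH} (compare Remark \ref{rem:two decomp}), lies in the estimate of $W_{jm}^H(x,y)$. Here I apply inequality \eqref{0303.4} to each term $(S_{\theta_i} - S_{\theta_x})[f^{(m+\ell)}(y)]$ with $a = \sigma_{m+\ell}$, $b = \sigma_j$: the constant in \eqref{0303.4} is bounded by $K_0 K$, which is precisely where this product appears in the final estimate. The crucial point is that the two smoothing parameters are comparable: since $x \in Q_i^*$ implies $\frac12 q_i \leq \dist(x,F) \leq 6 q_i$ by \eqref{dist.xF.q}, one has $\theta_i \leq \theta_x \leq 12^\tau \theta_i$, and consequently $\max\{\theta_i^{b-a}, \theta_x^{b-a}\} \lesssim \theta_i^{b-a} = q_i^{|j|-|m|-|\ell|}$ with an implicit constant at most $12^{\tau|b-a|} \leq 12^{\tau \rho \delta} = 12^\rho \leq 12^{k+1}$, which depends only on $k$ since $\tau \delta = 1$. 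Combined with $|x-y|^{|\ell|} \lesssim q_i^{|\ell|}$ and $q_i^{|m|-|j|}$, this produces a term $\lesssim K_0 K M$.

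Summing the three contributions over the $\lesssim 1$ relevant indices $i$ and over the finite range of multi-indices $m, \ell$, and absorbing $K_0 \leq K$ (so $K_0 + K \lesssim K_0 K$ since $K_0, K \geq 1$), I obtain $\|g^{(j)}(x)\|_{\sigma_j} \lesssim K_0 K M$, which is the claim.
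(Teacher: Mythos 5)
Your proposal is correct and follows essentially the same route as the paper: the same choice of $y$ with $|x-y|=\dist(x,F)$, the same decomposition into $G_j$, $Z_{jm}^L$, $W_{jm}^H$ from Lemma \ref{lemma:trick theta x}, and the same use of \eqref{0303.4} together with the comparability $\theta_i \leq \theta_x \leq 12^\tau \theta_i$ to handle $W_{jm}^H$, which is where the factor $K_0 K$ enters. The power-counting in each of the three estimates matches the paper's computation.
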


\begin{proof}
Let $x,j$ be like in the statement. 
Fix a point $y \in F$ such that 
\begin{equation} \label{y.special}
|x-y| = \dist(x,F),
\end{equation}
and consider the decomposition of $g^{(j)}(x)$ given in Lemma \ref{lemma:trick theta x}.
We estimate each term of such a decomposition separately.

\emph{Estimate of $G_j(x)$}. 
[Similar to ``case $|j| < |m|+|\ell|$'' 
in the proof of Lemma \ref{lemma:cubi intermedi sj}.]
By \eqref{def Gj}, \eqref{def Pj}, 
\[
\| G_j(x) \|_{\sigma_j} 
\lesssim \sum_{i \in \mN} \sum_{|j+\ell| \leq k} 
\| S_{\theta_i} [f^{(j+\ell)}(p_i)] \, \|_{\sigma_j} |x-p_i|^{|\ell|} \ph_i^*(x).
\]
Since $\sigma_j \geq \sigma_{j+\ell}$, 
by \eqref{S1}, \eqref{def K}, 
then \eqref{def sj}, \eqref{def theta},
\eqref{0903.1.normalized}, 
\begin{align*}
\| S_{\theta_i}[f^{(j+\ell)}(p_i)] \, \|_{\sigma_j} 
& \leq K \theta_i^{\sigma_j - \sigma_{j+\ell}} \| f^{(j+\ell)}(p_i) \|_{\sigma_{j+\ell}}
\leq K M q_i^{-|\ell|}.
\end{align*}
If $\ph_i^*(x) \neq 0$, 
then $x \in Q_i^*$, and therefore  
$|x-p_i|^{|\ell|} \lesssim q_i^{|\ell|}$
by \eqref{dist.xpi.q}.
Since $\sum_{i \in \mN} \ph_i^*(x) = 1$, we obtain 
\begin{equation} \label{aux.Gj}
\| G_j(x) \|_{\sigma_j} 
\lesssim KM.
\end{equation}

\emph{Estimate of $Z_{jm}^L(x,y)$.} 
[Similar to estimate of $\| Z_{jm}^L(x,y) \|_{\sigma_\rho}$ 
in the proof of Lemma \ref{lemma:piano.1}.]
By Lemma \ref{lemma:polinomi}, 
\[
\| Z_{jm}^L(x,y) \|_{\sigma_j} 
\lesssim \sum_{i \in \mN} \sum_{|m+\ell| \leq k}  
\| S_{\theta_i} [ R_{m+\ell}(p_i, y)] \, \|_{\sigma_j} 
|x-y|^{|\ell|} |\pa_x^{j-m} \ph_i^*(x)|.
\]
Since $\sigma_j > \sigma_\rho$, by \eqref{S1}, \eqref{def K}, 
\[
\| S_{\theta_i} [ R_{m+\ell}(p_i, y)] \, \|_{\sigma_j} 
\leq K \theta_i^{\sigma_j - \sigma_\rho} \| R_{m+\ell}(p_i, y) \|_{\sigma_\rho}
= K q_i^{|j|-\rho} \| R_{m+\ell}(p_i, y) \|_{\sigma_\rho}.
\] 
Since $p_i, y$ are both in $F$, one has, by \eqref{0903.2.normalized}, 
\[
\| R_{m+\ell}(p_i, y) \|_{\sigma_\rho} 
\leq M |p_i - y|^{\rho - |m| - |\ell|},
\]
while $\pa_x^{j-m} \ph_i^*(x)$ is estimated by \eqref{POU.der},
and it is nonzero only if $x \in Q_i^*$. 
Thus 
\begin{equation*} 
\| Z_{jm}^L(x,y) \|_{\sigma_j} 
\lesssim K M \sum_{\begin{subarray}{c} i \in \mN \\ x \in Q_i^* \end{subarray}} 
\sum_{|m+\ell| \leq k}  
q_i^{|m| - \rho}
|p_i - y|^{\rho - |m| - |\ell|}  
|x-y|^{|\ell|}.
\end{equation*}
By \eqref{y.special}, \eqref{dist.xF.q}, 
$|x-y| \lesssim q_i$. 
By \eqref{dist.xpi.q}, $|p_i - x| \lesssim q_i$. 
Hence $|p_i - y| \leq |p_i-x| + |x-y| \lesssim q_i$,
and 
\[
|p_i - y|^{\rho - |m| - |\ell|}  
\lesssim q_i^{\rho - |m| - |\ell|},
\quad \   
|x-y|^{|\ell|} \lesssim q_i^{|\ell|}.
\]
By Proposition \ref{prop:cubes}, 
there are at most $12^n$ indices $i$ such that $x \in Q_i^*$. 
Therefore 
\begin{equation} \label{aux.ZjmL}
\| Z_{jm}^L(x,y) \|_{\sigma_j} 
\lesssim KM.
\end{equation}

\emph{Estimate of $W_{jm}^H(x,y)$.}
By \eqref{def WjmH}, \eqref{def Pj},  
\[
\| W_{jm}^H(x,y) \|_{\sigma_j}  
\lesssim \sum_{i \in \mN} \sum_{|m+\ell| \leq k} 
\| (S_{\theta_i} - S_{\theta_x}) [f^{(m+\ell)}(y)] \, \|_{\sigma_j} 
|x-y|^{|\ell|} |\pa_x^{j-m} \ph_i^*(x)|.
\]
By \eqref{y.special}, \eqref{dist.xF.q}, 
$|x-y| \lesssim q_i$; 
by \eqref{POU.der}, we get
\[
\| W_{jm}^H(x,y) \|_{\sigma_j}  
\lesssim \sum_{\begin{subarray}{c} i \in \mN \\ x \in Q_i^* \end{subarray}} 
\sum_{|m+\ell| \leq k} 
\| (S_{\theta_i} - S_{\theta_x}) [f^{(m+\ell)}(y)] \, \|_{\sigma_j} 
q_i^{|\ell|+|m| - |j|}.
\]
By \eqref{0303.4}, \eqref{def K}, one has 
\[
\| (S_{\theta_i} - S_{\theta_x}) [f^{(m+\ell)}(y)] \, \|_{\sigma_j} 
\leq K_0 K \max \{ \theta_i^{\sigma_j - \sigma_{m+\ell}} , 
\theta_x^{\sigma_j - \sigma_{m+\ell}} \} 
\| f^{(m+\ell)}(y) \|_{\sigma_{m+\ell}}
\]
(no matter whether $\sigma_j$ is larger, smaller or equal to $\sigma_{m+\ell}$). 
By \eqref{def theta}, \eqref{def theta x}, \eqref{dist.xF.q}, 
one has
\[
\theta_i^\delta 
\leq \theta_x^\delta 
\leq 12 \theta_i^\delta. 
\]
Hence, recalling \eqref{def sj},  
\begin{align*}
\max \{ \theta_i^{\sigma_j - \sigma_{m+\ell}} , \theta_x^{\sigma_j - \sigma_{m+\ell}} \} 
= \max \{ \theta_i^{\d(|m|+|\ell|-|j|)} , \theta_x^{\d(|m|+|\ell|-|j|)} \} 
& \lesssim \theta_i^{\d(|m|+|\ell|-|j|)} 
\\ & 
\lesssim q_i^{|j|-|m|-|\ell|}.
\end{align*}
Moreover, since $y \in F$, by \eqref{0903.1.normalized},  
$\| f^{(m+\ell)}(y) \|_{\sigma_{m+\ell}} \leq M$. 
By Proposition \ref{prop:cubes}, the number of indices $i \in \mN$ 
such that $x \in Q_i^*$ is at most $12^n$. Thus we obtain 
\begin{equation} \label{aux.WjmH}
\| W_{jm}^H(x,y) \|_{\sigma_j}  
\lesssim K_0 K M.
\end{equation}

The sum of \eqref{aux.Gj}, \eqref{aux.ZjmL}, \eqref{aux.WjmH} 
gives \eqref{est.gj}.
\end{proof}

\begin{lemma}
\emph{(Bound for $\| \pa_x^j g^{(0)}(x) \|_{\sigma_\rho}$ of order $|j|=k+1$ 
at $\dist(x,F) < 1/2$).}
\label{lemma:der k+1 close}
One has 
\[
\| \pa_x^j g^{(0)}(x) \|_{\sigma_\rho} 
\lesssim K M (\dist(x,F))^{\rho-k-1}
\quad \ \forall |j| = k+1, \ \ x \in \Om, \ \ \dist(x,F) < 1/2.
\]
\end{lemma}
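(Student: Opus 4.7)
The plan is to mimic the proof of Lemma \ref{lemma:piano.1}, simply replacing the decomposition of Lemma \ref{lemma:ZjmLH} (which requires $|j| \leq k$) by its analogue for $|j|=k+1$ provided by Lemma \ref{lemma:pappa}. First, I would fix a point $y \in F$ attaining $|x-y| = \dist(x,F)$; this exists because $F$ is closed, and by hypothesis $|x-y| = \dist(x,F) < 1/2$. Applying Lemma \ref{lemma:pappa} then gives
\[
\pa_x^j g^{(0)}(x) = \sum_{0 \le m < j} \binom{j}{m} \bigl( Z_{jm}^L(x,y) - Z_{jm}^H(x,y) \bigr),
\]
so it suffices to establish the two bounds $\|Z_{jm}^L(x,y)\|_{\sigma_\rho}, \|Z_{jm}^H(x,y)\|_{\sigma_\rho} \lesssim KM|x-y|^{\rho-|j|}$. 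Summing the finitely many resulting terms (their number depends only on $k,n$) and recalling $|x-y|=\dist(x,F)$ and $|j|=k+1$ then yields the claim.

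Next, I would observe that these two bounds are precisely \eqref{est.ZjmL.aux} and \eqref{est.ZjmH.aux} already proved inside Lemma \ref{lemma:piano.1}, and that the arguments establishing them transfer verbatim. The crucial exponent-counting step in that proof only uses $m \leq j$ (so that $|m|-|j| \leq 0$ and hence $q_i^{|m|-|j|} \lesssim |x-y|^{|m|-|j|}$ via $|x-y| \lesssim q_i$), together with $|x-y| \lesssim \dist(x,F)$ and $\dist(x,F) \lesssim q_i$ whenever $x \in Q_i^*$, the last of which comes from \eqref{dist.xF.q}. In our setting $m < j$ still gives $m \leq j$; the choice $|x-y|=\dist(x,F)$ trivially gives the middle inequality; and \eqref{dist.xF.q} gives the last. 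Therefore the same chain --- expand $P_m(x,p_i) - P_m(x,y)$ via Lemma \ref{lemma:polinomi}, bound $S_{\theta_i}$ through \eqref{S1} for $Z_{jm}^L$ and $(I-S_{\theta_i})$ through \eqref{S2} for $Z_{jm}^H$, apply the Taylor remainder bound \eqref{0903.2.normalized} or the pointwise bound \eqref{0903.1.normalized} on $f^{(m+\ell)}$, and control $\pa_x^{j-m}\ph_i^*$ via \eqref{POU.der} --- produces the two required estimates with no new ingredient.

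I do not anticipate any real obstacle: the proof is a routine transcription. The only delicate point worth double-checking is that the exponent $\rho - |j| = \rho - k - 1$ is now non-positive, so that $|x-y|^{\rho-|j|}$ \emph{grows} as $x$ approaches $F$; this is exactly the content of the statement and is in perfect harmony with the argument, because the relevant estimate $q_i^{|m|-|j|} \lesssim |x-y|^{|m|-|j|}$ demands only $|m| - |j| \leq 0$, which is still valid. No interpolation in the scale and no use of the supplementary ``$\theta_x$-trick'' of Lemma \ref{lemma:trick theta x} are needed, since we are aiming only at the weakest norm $\|\cdot\|_{\sigma_\rho}$ in the scale.
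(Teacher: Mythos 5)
Your proposal is correct and coincides with the paper's own proof: the paper likewise fixes $y\in F$ with $|x-y|=\dist(x,F)$, applies the decomposition of Lemma \ref{lemma:pappa}, and observes that the bounds \eqref{est.ZjmL.aux}, \eqref{est.ZjmH.aux} from the proof of Lemma \ref{lemma:piano.1} carry over word-by-word since $m<j$ still gives $m\leq j$ and the condition $|x-y|\leq 2\dist(x,F)$ holds trivially. Your extra remarks on the non-positive exponent and on not needing the $\theta_x$-trick are accurate.
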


\begin{proof} 
{[Similar to the proof of Lemma \ref{lemma:piano.1}]}
Let $x,j$ be like in the statement, and fix a point $y \in F$ such that 
$|x-y| = \dist(x,F)$. 
We consider the decomposition in Lemma \ref{lemma:pappa}
and we estimate each term $Z_{jm}^L(x,y)$, $Z_{jm}^H(x,y)$ separately. 
Following word-by-word the proof of Lemma \ref{lemma:piano.1}, 
we obtain the bounds \eqref{est.ZjmL.aux}, \eqref{est.ZjmH.aux}
also in the present case. 
Here $|j| = k+1$ and $|x-y| = \dist(x,F)$, therefore 
$|x-y|^{\rho - |j|} = (\dist(x,F))^{\rho-k-1}$, 
and the lemma is proved.
\end{proof}

\subsection{Estimates on $\R^n$}
\label{subsec:patch}

In the next three lemmas we patch the estimates proved in the previous subsections.

\begin{lemma}
\emph{(Bound for $\| g^{(j)}(x) - P_j(x,y) \|_{\sigma_\rho}$ 
with $y \in F$, $x \in \R^n$).}
\label{lemma:g-P.global}
One has 
\begin{align} 
& \| g^{(j)}(x) - P_j(x,y) \|_{\sigma_\rho} 
\lesssim K M |x-y|^{\rho-|j|} 
\label{est.Rj.global} 
\quad \ 
\forall |j| \leq k, \ \ 
y \in F, \ \ 
x \in \R^n.
\notag 
\end{align}
\end{lemma}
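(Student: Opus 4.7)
The plan is to prove \eqref{est.Rj.global} by a simple case split on the position of $x$, gathering all the estimates already established in subsections \ref{subsec:est.far} and \ref{subsec:est.close}. Every case has already been handled in isolation; the present lemma just records the fact that these cases exhaust all possibilities for $x \in \R^n$ and $y \in F$.

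First I would treat the case $x \in F$. In this case, by \eqref{1003.1} we have $g^{(j)}(x) = f^{(j)}(x)$, and the polynomial $P_j(x,y)$ defined in \eqref{def Pj} coincides with the Taylor polynomial appearing in \eqref{def Pj Rj}. Therefore $g^{(j)}(x) - P_j(x,y) = R_j(x,y)$, and the bound $\| R_j(x,y) \|_{\s_\rho} \leq M |x-y|^{\rho-|j|}$ is exactly the hypothesis \eqref{0903.2.normalized} on $f$. Since $M \leq KM$ (because $K \geq 1$ by \eqref{def K}), this falls within the claimed estimate.

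Next I would handle $x \in \Om$. Here I would split further according to $\dist(x,F)$. If $\dist(x,F) \geq 1/2$, then Lemma \ref{lemma:Rj lontani} directly gives the required bound $\| g^{(j)}(x) - P_j(x,y) \|_{\s_\rho} \lesssim KM |x-y|^{\rho-|j|}$. If instead $\dist(x,F) < 1/2$, one subdivides once more: when $|x-y| \leq 2 \dist(x,F)$ the conclusion follows from Lemma \ref{lemma:piano.1}, while when $|x-y| > 2 \dist(x,F)$ it follows from Lemma \ref{lemma:piano.3}.

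Since the three cases $x \in F$, $x \in \Om$ with $\dist(x,F) \geq 1/2$, and $x \in \Om$ with $\dist(x,F) < 1/2$ cover all of $\R^n$, and in each case the bound holds with an implicit constant depending only on $n,k$, the lemma is proved. There is no substantive obstacle: the entire proof is a patching argument that merely collects the previously proved pieces, which is precisely the role of this subsection as announced in subsection \ref{subsec:patch}.
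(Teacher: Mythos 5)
Your proof is correct and follows exactly the same route as the paper: the case $x\in F$ via \eqref{1003.1} and \eqref{0903.2.normalized}, the case $\dist(x,F)\geq 1/2$ via Lemma \ref{lemma:Rj lontani}, and the two subcases of $\dist(x,F)<1/2$ via Lemmas \ref{lemma:piano.1} and \ref{lemma:piano.3}. Nothing is missing.
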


\begin{proof} 
Let $y \in F$. 
If $x \in F$, the inequality 
holds by \eqref{1003.1}, \eqref{0903.2.normalized}.
If $x \in \Om$, with $\dist(x,F) \geq 1/2$, 
the inequality holds by Lemma \ref{lemma:Rj lontani}. 
If $x \in \Om$, with $\dist(x,F) < 1/2$ and $|x-y| \leq 2 \dist(x,F)$, 
it holds by Lemma \ref{lemma:piano.1}. 
If $x \in \Om$, with $\dist(x,F) < 1/2$ and $|x-y| > 2 \dist(x,F)$, 
it holds by Lemma \ref{lemma:piano.3}. 
\end{proof}

\begin{lemma}
\emph{(Bound for $\| g^{(j)}(x) \|_{\sigma_j}$ with $x \in \R^n$).}
\label{lemma:g.global} 
One has
\begin{equation} \label{est.gj.global}
\| g^{(j)}(x) \|_{\sigma_j} \lesssim K_0 K M
\quad \ \forall |j| \leq k, \ \ x \in \R^n, 
\end{equation}
where $K_0, K$ are defined in \eqref{def K}.
\end{lemma}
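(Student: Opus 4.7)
The plan is to patch together the estimates from the previous subsections according to the location of $x$ relative to $F$. There is essentially no new work: all the heavy lifting has already been done in Lemmas \ref{lemma:cubi lontani}, \ref{lemma:cubi intermedi sj}, and \ref{lemma:piano.4}, and this lemma is the analog, for the norm $\|\cdot\|_{\sigma_j}$, of the ``global'' statement Lemma \ref{lemma:g-P.global} for Taylor remainders in the norm $\|\cdot\|_{\sigma_\rho}$.

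First I would split $\R^n = F \cup \Om$. For $x \in F$, definition \eqref{1003.1} gives $g^{(j)}(x) = f^{(j)}(x)$, so the bound $\|g^{(j)}(x)\|_{\sigma_j} \leq M$ is immediate from hypothesis \eqref{0903.1.normalized}, which is stronger than what is required (since $M \leq K_0 K M$ by the definitions of $K_0, K$ in \eqref{def K}, both of which are $\geq 1$).

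Next, for $x \in \Om$, I would split according to $\dist(x,F)$ into three subcases: (i) $\dist(x,F) > 6$, where Lemma \ref{lemma:cubi lontani} gives $g^{(j)}(x) = 0$ and the bound is trivial; (ii) $1/2 \leq \dist(x,F) \leq 6$, where Lemma \ref{lemma:cubi intermedi sj} gives directly $\|g^{(j)}(x)\|_{\sigma_j} \lesssim K M \leq K_0 K M$; and (iii) $\dist(x,F) < 1/2$, which is exactly Lemma \ref{lemma:piano.4}. Collecting the three bounds yields \eqref{est.gj.global}.

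The main obstacle in this lemma is not its proof but the preceding work, in particular Lemma \ref{lemma:piano.4}, where the decomposition from Lemma \ref{lemma:trick theta x} (with the auxiliary smoothing parameter $\theta_x$ introduced precisely to avoid the ``lack of regularity/lack of smallness'' trade-off discussed in Remark \ref{rem:two decomp}) is needed in order to obtain the correct norm $\|\cdot\|_{\sigma_j}$ close to $F$. Once that is in place, Lemma \ref{lemma:g.global} is a pure patching statement, and the constant $K_0 K$ in the final bound is forced by the worst of the three subcases, namely Lemma \ref{lemma:piano.4}.
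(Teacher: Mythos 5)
Your proof is correct and follows exactly the paper's argument: split into $x\in F$ (use \eqref{1003.1} and \eqref{0903.1.normalized}), $x\in\Om$ with $\dist(x,F)\geq 1/2$ (Lemmas \ref{lemma:cubi lontani} and \ref{lemma:cubi intermedi sj}), and $x\in\Om$ with $\dist(x,F)<1/2$ (Lemma \ref{lemma:piano.4}). Your surrounding remarks about where the real work lies are also accurate.
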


\begin{proof}
For $x \in F$ one has $\| g^{(j)}(x) \|_{\sigma_j} \leq M$ 
by \eqref{0903.1.normalized}, \eqref{1003.1}.
For $x \in \Om$ with $\dist(x,F) \geq 1/2$ 
one has $\| g^{(j)}(x) \|_{\sigma_j} \lesssim K M$  
by Lemmas \ref{lemma:cubi lontani}, \ref{lemma:cubi intermedi sj}. 
For $x \in \Om$ with $\dist(x,F) < 1/2$, 
the inequality holds by Lemma \ref{lemma:piano.4}.
\end{proof}

\begin{lemma}
\emph{(Bound for $\| \pa_x^j g^{(0)}(x) \|_{\sigma_\rho}$ 
of order $|j|=k+1$ with $x \in \Om$).}
\label{lemma:der.k+1.global}
One has 
\[
\| \pa_x^j g^{(0)}(x) \|_{\sigma_\rho} 
\lesssim K M (\dist(x,F))^{\rho-k-1}
\quad \ \forall |j| = k+1, \ \ x \in \Om.
\]
\end{lemma}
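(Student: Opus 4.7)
The plan is to patch together the three regimes of $\dist(x,F)$ that were handled separately in the previous subsections, exactly mirroring the strategy of Lemma \ref{lemma:g.global}. The key observation that makes the patching trivial is that, since $k < \rho \leq k+1$, the exponent $\rho - k - 1$ is in $(-1, 0]$, so $d \mapsto d^{\rho-k-1}$ is nonincreasing on $(0,\infty)$ and bounded below by a dimensional constant on bounded intervals bounded away from $0$.

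First I would split $\Om$ into the three regions $\{\dist(x,F) > 6\}$, $\{1/2 \leq \dist(x,F) \leq 6\}$ and $\{\dist(x,F) < 1/2\}$. On the first region, the argument used in the proof of Lemma \ref{lemma:cubi lontani} shows that $g^{(0)}$ vanishes identically on the open set $\{x \in \Om : \dist(x,F) > 6\}$ (no cube $Q_i^*$ with $i \in \mN$ can reach a point there), hence all its partial derivatives, including those of order $|j| = k+1$, vanish there. The claimed bound is then trivial.

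On the intermediate region $1/2 \leq \dist(x,F) \leq 6$, Lemma \ref{lemma:der.k+1.far} gives $\| \pa_x^j g^{(0)}(x) \|_{\sigma_\rho} \lesssim KM$; since $\rho - k - 1 \leq 0$ and $\dist(x,F) \leq 6$, we have $(\dist(x,F))^{\rho-k-1} \geq 6^{\rho-k-1} \geq 6^{-1}$, so $1 \lesssim (\dist(x,F))^{\rho-k-1}$ and the desired inequality follows. On the remaining region $\dist(x,F) < 1/2$, Lemma \ref{lemma:der k+1 close} gives precisely the stated bound.

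There is no real obstacle here: both of the substantial estimates (at intermediate and short distance from $F$) have been carried out in Lemmas \ref{lemma:der.k+1.far} and \ref{lemma:der k+1 close}, and the only new ingredient is the elementary comparison of $(\dist(x,F))^{\rho-k-1}$ with a constant on the bounded range $[1/2,6]$, which uses nothing beyond the sign of $\rho-k-1$.
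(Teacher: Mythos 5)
Your proposal is correct and coincides with the paper's own proof: the same three-region splitting, the same appeal to Lemmas \ref{lemma:der k+1 close}, \ref{lemma:der.k+1.far} and \ref{lemma:cubi lontani}, and the same elementary observation that $-1<\rho-k-1\leq 0$ gives $6^{-1}\leq(\dist(x,F))^{\rho-k-1}$ on the intermediate region. Nothing to add.
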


\begin{proof} 
For $x \in \Om$ with $\dist(x,F) < 1/2$ the estimate is given by Lemma 
\ref{lemma:der k+1 close}.
For $1/2 \leq \dist(x,F) \leq 6$ one has 
\[
\| \pa_x^j g^{(0)}(x) \|_{\sigma_\rho} 
\lesssim KM
\lesssim KM (\dist(x,F))^{\rho-k-1},
\]
where the first inequality holds by Lemma \ref{lemma:der.k+1.far},
and the second one holds because 
$-1 < \rho-k-1 \leq 0$ and therefore
\[
6^{-1} \leq 6^{\rho-k-1}
\leq (\dist(x,F))^{\rho-k-1}. 
\]
Finally, in the open set $\{ x \in \Om : \dist(x,F) > 6 \}$ 
the function $g^{(0)}$ is identically zero 
by Lemma \ref{lemma:cubi lontani}, 
therefore $\pa_x^j g^{(0)}$ vanishes on that set.
\end{proof}

\subsection{Estimates of Taylor remainders in $\R^n$}
\label{subsec:Rj}

Our goal is to prove that, for some constant $C$,
\begin{alignat}{2} 
\| g^{(j)}(x) \|_{\sigma_j} 
& \leq CM
\quad \ && \forall |j| \leq k, \ \ x \in \R^n,
\label{goal.1}
\\
\| R_j(x,y ; g) \|_{\sigma_\rho} 
& \leq CM |x-y|^{\rho - |j|}
\quad \ && \forall |j| \leq k, \ \ x,y \in \R^n,
\label{goal.2}
\end{alignat}
where $R_j(x,y;g)$ is the Taylor remainder of $g^{(j)}$, 
namely
\begin{equation} \label{def Pj Rj g}
R_j(x,y;g) := g^{(j)}(x) - P_j(x,y;g), 
\quad \ 
P_j(x,y;g) := \sum_{|j+\ell| \leq k} \frac{1}{\ell!} g^{(j+\ell)}(y) (x-y)^\ell.
\end{equation}
Bounds \eqref{goal.1}-\eqref{goal.2} correspond to 
conditions \eqref{0903.1.normalized}-\eqref{0903.2.normalized} 
with $f,F$ replaced by $g,\R^n$. 

Inequality \eqref{goal.1} is given by Lemma \ref{lemma:g.global}.
Inequality \eqref{goal.2} restricted to the case $y \in F$ 
is given by Lemma \ref{lemma:g-P.global}, because, for $y \in F$, 
one has $g^{(\ell)}(y) = f^{(\ell)}(y)$ and therefore $P_j(x,y;g) = P_j(x,y)$.
Thus it remains to prove the inequality in \eqref{goal.2} for $y \in \Om$.

\begin{lemma}
\emph{(Bound for $\| R_j(x,y;g) \|_{\sigma_\rho}$ with $y \in \Om$, 
$\dist(L,F) \leq |x-y|$).}
\label{lemma:segment.close}
One has 
\[
\| R_j(x,y;g) \|_{\sigma_\rho} \lesssim KM |x-y|^{\rho-|j|} 
\quad \ \forall |j| \leq k, \ \ y \in \Om, \ \ x \in \R^n, \ \ 
\dist(L,F) \leq |x-y|,
\]
where $L := \{ y+\lm(x-y) : \lm \in [0,1] \}$ 
is the segment in $\R^n$ of endpoints $x,y$.
\end{lemma}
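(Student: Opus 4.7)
The plan is to exploit the hypothesis $\dist(L,F)\le|x-y|$ to pivot the Taylor expansion through a point $z\in F$ that lies close to both $x$ and $y$. Since the segment $L$ is compact and $w\mapsto \dist(w,F)$ is continuous, there exists $w\in L$ realizing $\dist(w,F)=\dist(L,F)\le|x-y|$; because $F$ is closed, I can then pick $z\in F$ with $|w-z|=\dist(w,F)\le|x-y|$. Writing $w=y+\lambda(x-y)$ for some $\lambda\in[0,1]$ gives $|x-w|,|y-w|\le|x-y|$, so the triangle inequality yields
\[
|x-z|\le 2|x-y|,\qquad |y-z|\le 2|x-y|.
\]

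Next I would decompose $R_j(x,y;g)$ by inserting the Taylor polynomial of $g$ centered at $z$:
\[
R_j(x,y;g) = R_j(x,z;g) - \bigl(P_j(x,y;g) - P_j(x,z;g)\bigr).
\]
The identity of Lemma \ref{lemma:polinomi} is purely algebraic --- its proof relies only on the multinomial expansion of $(x-z)^\ell=((x-y)+(y-z))^\ell$ together with the tautological definition $R_m(u,v):=f^{(m)}(u)-P_m(u,v)$, and never uses that the evaluation points belong to $F$. Hence the very same identity transfers to the collection $g$ at arbitrary points of $\R^n$, giving
\[
P_j(x,y;g)-P_j(x,z;g) = \sum_{|j+\ell|\le k}\frac{1}{\ell!}\, R_{j+\ell}(y,z;g)\,(x-y)^\ell,
\]
so that
\[
R_j(x,y;g) = R_j(x,z;g) - \sum_{|j+\ell|\le k}\frac{1}{\ell!}\, R_{j+\ell}(y,z;g)\,(x-y)^\ell.
\]

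The choice $z\in F$ is what makes this decomposition useful: since $z\in F$ one has $g^{(m)}(z)=f^{(m)}(z)$ for every $|m|\le k$, so $P_m(\cdot,z;g)=P_m(\cdot,z)$, and therefore both $R_j(x,z;g)$ and each $R_{j+\ell}(y,z;g)$ are exactly of the form covered by Lemma \ref{lemma:g-P.global} (applied at $(x,z)$ and $(y,z)$ respectively, with $z\in F$). This gives
\[
\|R_j(x,z;g)\|_{\sigma_\rho}\lesssim KM\,|x-z|^{\rho-|j|},\qquad
\|R_{j+\ell}(y,z;g)\|_{\sigma_\rho}\lesssim KM\,|y-z|^{\rho-|j|-|\ell|}.
\]
Using $|x-z|,|y-z|\le 2|x-y|$, the fact that $\rho-|j|$ and $\rho-|j|-|\ell|$ are nonnegative (since $|j+\ell|\le k<\rho$), and the trivial bound $|(x-y)^\ell|\le|x-y|^{|\ell|}$, every term in the sum is $\lesssim KM\,|x-y|^{\rho-|j|}$, and the number of terms is $\lesssim 1$, which gives the claim.

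The main difficulty is not analytic but bookkeeping: one must check that the algebraic identity of Lemma \ref{lemma:polinomi} indeed carries over to the collection $g$ at points $x,y,z$ not necessarily in $F$ (which it does, being formal), and one must be careful that the pivot point $z$ is chosen in $F$ so that Lemma \ref{lemma:g-P.global} is simultaneously available for the pairs $(x,z)$ and $(y,z)$. Once the hypothesis $\dist(L,F)\le|x-y|$ has been converted into the geometric estimate $|x-z|,|y-z|\le 2|x-y|$, the rest of the argument is immediate.
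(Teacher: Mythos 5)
Your proposal is correct and follows essentially the same route as the paper: pick a point on $L$ realizing $\dist(L,F)$, project it to a point of $F$ (your $z$, the paper's $p$), pivot the Taylor expansion through that point, and control the polynomial difference via the algebraic identity of Lemma \ref{lemma:polinomi} applied to $g$ on $\R^n$ together with Lemma \ref{lemma:g-P.global}. The only differences are notational.
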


\begin{proof}
Let $x,y,j$ be like in the statement. 
Fix two points $z,p$ such that 
\[
z \in L, \quad 
p \in F, \quad
\dist(L,F) = |z-p|. 
\]
Since $z \in L$ one has 
$|x-z| \leq |x-y|$ 
and $|y-z| \leq |x-y|$; 
also, $|z-p| = \dist(L,F) \leq |x-y|$ by assumption. 
Therefore 
\begin{equation} \label{xy.zp}
\begin{aligned}
|x-p| \leq |x-z| + |z-p| \leq 2|x-y|, 
\\
|y-p| \leq |y-z| + |z-p| \leq 2|x-y|.
\end{aligned}
\end{equation}
Recalling the definition \eqref{def Pj Rj g} of $R_j(x,y;g)$,
adding and subtracting $P_j(x,p)$, 
one has 
\begin{equation} \label{aux.gP.PP}
\\| R_j(x,y;g) \|_{\sigma_\rho} 
\leq \| g^{(j)}(x) - P_j(x,p) \|_{\sigma_\rho} 
+ \| P_j(x,p) - P_j(x,y;g) \|_{\sigma_\rho}.
\end{equation} 
We estimate these terms separately.

\emph{Estimate of $\| g^{(j)}(x) - P_j(x,p) \|_{\sigma_\rho}$.} 
Since $p \in F$, by Lemma \ref{lemma:g-P.global} one has 
\[
\| g^{(j)}(x) - P_j(x,p) \|_{\sigma_\rho} 
\lesssim K M |x-p|^{\rho - |j|}.
\]
Also, $|x-p|^{\rho - |j|} \lesssim |x-y|^{\rho - |j|}$
because $|x-p| \lesssim |x-y|$ (see \eqref{xy.zp}) 
and $\rho - |j| \geq 0$. Hence 
\begin{equation} \label{aux.gP.25.1}
\| g^{(j)}(x) - P_j(x,p) \|_{\sigma_\rho} 
\lesssim K M |x-y|^{\rho - |j|}.
\end{equation}

\emph{Estimate of $\| P_j(x,p) - P_j(x,y;g) \|_{\sigma_\rho}$.}
Since $p \in F$, one has $P_j(x,p) = P_j(x,p;g)$. 
Lemma \ref{lemma:polinomi} (applied to $(g^{(\ell)}, \R^n)$ 
instead of $(f^{(\ell)}, F)$) gives 
\begin{equation} \label{P-P j segment}
P_j(x, y;g) - P_j(x,p;g) = \sum_{|j+\ell| \leq k} 
\frac{1}{\ell!} R_{j+\ell}(y,p;g) (x-y)^\ell.
\end{equation}
Since $p \in F$, one has 
\[
R_{j+\ell}(y,p;g) 
= g^{(j+\ell)}(y) - P_{j+\ell}(y,p;g)
= g^{(j+\ell)}(y) - P_{j+\ell}(y,p),
\]
and therefore, by Lemma \ref{lemma:g-P.global},
\begin{equation} \label{aux.25.2}
\| R_{j+\ell}(y,p;g) \|_{\sigma_\rho} 
= \| g^{(j+\ell)}(y) - P_{j+\ell}(y,p) \|_{\sigma_\rho} 
\lesssim KM |y-p|^{\rho-|j+\ell|}. 
\end{equation}
From \eqref{P-P j segment}, \eqref{aux.25.2} we deduce that
\[
\| P_j(x, y;g) - P_j(x,p;g) \|_{\sigma_\rho} 
\lesssim K M \sum_{|j+\ell| \leq k} |y-p|^{\rho-|j|-|\ell|} |x-y|^{|\ell|}.
\]
Now 
\[
|y-p|^{\rho-|j|-|\ell|} \lesssim |x-y|^{\rho-|j|-|\ell|}
\] 
because $|y-p| \lesssim |x-y|$ (see \eqref{xy.zp}) 
and $\rho-|j|-|\ell| \geq 0$. 
Hence, recalling that $P_j(x,p;g)$ $= P_j(x,p)$, we obtain 
\begin{equation} \label{aux.gP.25.3}
\| P_j(x,p) - P_j(x,y;g) \|_{\sigma_\rho} 
\lesssim KM |x-y|^{\rho-|j|}.
\end{equation}

By \eqref{aux.gP.PP}, \eqref{aux.gP.25.1}, \eqref{aux.gP.25.3}, 
the proof is complete.
\end{proof}

To extend the bound of Lemma \ref{lemma:segment.close} 
to the case $\dist(L,F) > |x-y|$, 
we need the following (classical) identity for the first order partial derivatives 
of Taylor's polynomials with respect to their center. 
From that formula we deduce a uniform bound for Taylor's remainder.

\begin{lemma}
\emph{(Formula for the gradient of $P_j(x,z;g)$ 
with respect to $z$ in $\Om$).}
\label{lemma:gradient}
One has 
\begin{align*}
& \pa_z^m ( P_j(x,z;g)) = \sum_{|j+m+\ell| = k+1} 
\frac{1}{\ell!} (\pa_z^{j+m+\ell} g^{(0)}(z)) \, (x-z)^\ell
\\ 
& \forall |j| \leq k, \ \ 
|m|=1, \ \ 
z \in \Om, \ \ 
x \in \R^n.
\end{align*} 
\end{lemma}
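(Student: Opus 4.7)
The plan is to compute the derivative directly from the definition of $P_j(x,z;g)$ in \eqref{def Pj Rj g}, using that on $\Om$ the function $g^{(0)}$ is $C^\infty$ (so that higher-order derivatives of $g^{(0)}$ at $z\in\Om$ exist) and that for $|j+\ell|\le k$ the collection element $g^{(j+\ell)}(z)$ coincides with $\pa_z^{j+\ell} g^{(0)}(z)$. Hence, on $\Om$,
\[
P_j(x,z;g) = \sum_{|j+\ell| \leq k} \frac{1}{\ell!}\, \pa_z^{j+\ell} g^{(0)}(z) \, (x-z)^\ell,
\]
and each summand is a smooth function of $z\in\Om$ that can be differentiated once more by the Leibniz rule.

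Since $|m|=1$, write $m=e_i$ for some $i\in\{1,\ldots,n\}$. Applying $\pa_z^m$ to each summand yields two contributions: one in which the derivative hits $\pa_z^{j+\ell}g^{(0)}(z)$, producing $\pa_z^{j+\ell+m}g^{(0)}(z)$; and one in which it hits $(x-z)^\ell$, giving $-\ell_i\,(x-z)^{\ell-m}$ when $\ell_i\ge 1$ and $0$ otherwise. Using the elementary identity $\ell_i/\ell! = 1/(\ell-m)!$ valid when $\ell_i\ge 1$, the second contribution becomes, after relabeling $\ell'=\ell-m$,
\[
-\sum_{|j+m+\ell'|\le k} \frac{1}{\ell'!}\, \pa_z^{j+m+\ell'} g^{(0)}(z)\, (x-z)^{\ell'}.
\]

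The first contribution is a sum over $|j+\ell|\le k$, which is the same as $|j+m+\ell|\le k+1$. Subtracting the two sums, all terms with $|j+m+\ell|\le k$ cancel in pairs, and only the terms with $|j+m+\ell|=k+1$ survive. This gives exactly the claimed formula
\[
\pa_z^m P_j(x,z;g) = \sum_{|j+m+\ell| = k+1} \frac{1}{\ell!}\, \pa_z^{j+m+\ell} g^{(0)}(z)\, (x-z)^\ell.
\]

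No step here is a genuine obstacle: the argument is a bookkeeping exercise with multi-indices, entirely analogous to the classical identity that the gradient (with respect to the center) of a Taylor polynomial of degree $N$ is built only from the top-order derivatives. The only mild care needed is to check that the substitution $\ell'=\ell-m$ is legitimate (i.e.\ the summation sets match), which follows from the fact that the constraint $\ell_i\ge 1$ is automatically enforced by the factor $\ell_i$ coming out of $\pa_z^m (x-z)^\ell$.
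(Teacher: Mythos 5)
Your proof is correct and follows essentially the same route as the paper's: apply the Leibniz rule to the defining sum, use $\ell_i/\ell! = 1/(\ell-m)!$ and the shift $\ell' = \ell - m$ on the term where the derivative hits $(x-z)^\ell$, and observe that all contributions with $|j+m+\ell| \leq k$ cancel, leaving only the top-order terms. The only cosmetic difference is that you write everything in terms of $\pa_z^{j+\ell} g^{(0)}$ from the start, while the paper keeps the collection elements $g^{(j+\ell)}$ and invokes the identification $\pa_z^m g^{(j+\ell)} = \pa_z^{j+m+\ell} g^{(0)}$ explicitly; the substance is identical.
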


\begin{proof}
Let $x,z,j,m$ be like in the statement. 
By definition \eqref{def Pj Rj g},
\[
P_j(x,z;g) = \sum_{|j+\ell| \leq k} \frac{1}{\ell!} g^{(j+\ell)}(z) (x-z)^\ell,
\]
and its first derivative is
\begin{align}
\pa_z^m( P_j(x,z;g) ) 
& = \sum_{|j+\ell| \leq k} \frac{1}{\ell!} \{ \pa_z^m g^{(j+\ell)}(z) \} (x-z)^\ell
+ \sum_{|j+\ell| \leq k} g^{(j+\ell)}(z) \Big( \pa_z^m \frac{(x-z)^\ell}{\ell!} \Big).
\label{aux.25.5}
\end{align}
By definition \eqref{def gj}, 
one has $\pa_z^m g^{(j+\ell)}(z) = \pa_z^{j+m+\ell} g^{(0)}(z)$ 
for all $\ell$ such that $|j+\ell| \leq k$, 
and also 
$\pa_z^{j+m+\ell} g^{(0)}(z) = g^{(j+m+\ell)}(z)$ 
for all $\ell$ such that $|j+m+\ell| \leq k$. 
Thus 
\begin{align}
\sum_{|j+\ell| \leq k} \frac{1}{\ell!} \{ \pa_z^m g^{(j+\ell)}(z) \} (x-z)^\ell
& = \sum_{|j+m+\ell| \leq k} \frac{1}{\ell!} g^{(j+m+\ell)}(z) (x-z)^\ell
\notag \\ & \quad \ 
+ \sum_{|j+m+\ell| = k+1} \frac{1}{\ell!} (\pa_z^{j+m+\ell} g^{(0)}(z)) \, (x-z)^\ell.
\label{aux.25.6}
\end{align}
Recalling that $|m|=1$, one has 
\[
\pa_z^m \Big( \frac{(x-z)^\ell}{\ell!} \Big) 
= - \frac{(x-z)^{\ell-m}}{(\ell-m)!} \quad \ \text{if } \ell \geq m,
\]
while $\pa_z^m ((x-z)^\ell) = 0$ if $\ell$ is not $\geq m$. 
Hence 
\begin{align}
\sum_{|j+\ell| \leq k} g^{(j+\ell)}(z) \Big( \pa_z^m \frac{(x-z)^\ell}{\ell!} \Big)
& = - \sum_{\begin{subarray}{c} |j+\ell| \leq k \\ \ell - m \geq 0 \end{subarray}} 
g^{(j+\ell)}(z) \frac{(x-z)^{\ell-m}}{(\ell-m)!}
\notag \\ & 
= - \sum_{|j+m+\ell| \leq k} 
g^{(j+m+\ell)}(z) \frac{(x-z)^{\ell}}{\ell !},
\label{aux.25.7}
\end{align}
where in the last identity we have made 
the change of summation variable $\ell = m + \ell'$ 
and renamed $\ell' \to \ell$.
Inserting \eqref{aux.25.6}, \eqref{aux.25.7} into \eqref{aux.25.5} 
we get the thesis.
\end{proof}

\begin{lemma}
\emph{(Bound for $\| R_j(x,y;g) \|_{\sigma_\rho}$ with $y \in \Om$, $\dist(L,F) > |x-y|$).}
\label{lemma:segment.small}
One has 
\[
\| R_j(x,y;g) \|_{\sigma_\rho} \lesssim K M |x-y|^{\rho-|j|} 
\quad \ \forall |j| \leq k, \ \ 
\dist(L,F) > |x-y|,
\]
where $L$ is the segment in $\R^n$ of endpoints $x,y$.
\end{lemma}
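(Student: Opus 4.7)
The plan is to exploit the fact that, under the hypothesis $\dist(L,F) > |x-y|$, the whole segment $L$ sits compactly in $\Om$ at distance at least $|x-y|$ from $F$, so that $g^{(0)}$ is $C^\infty$ along $L$ into $E_{\sigma_\rho}$ (by the last part of Theorem \ref{thm:WET sigma}) and the formula of Lemma \ref{lemma:gradient} for $\pa_z P_j(x,z;g)$ applies at every point of $L$, with uniform control on the top-order derivatives through Lemma \ref{lemma:der.k+1.global}. In essence, I would carry out the integral form of Taylor's remainder in this Banach-space-valued setting.

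First I would parameterize $L$ by $z(t) := y + t(x-y)$, $t \in [0,1]$, so that $z(0)=y$, $z(1)=x$, and $\dist(z(t),F) \geq \dist(L,F) > |x-y|$ for every $t$. Since $P_j(x,x;g) = g^{(j)}(x)$ (only the $\ell = 0$ term survives in \eqref{def Pj Rj g}), I would rewrite the remainder as the $E_{\sigma_\rho}$-valued Riemann integral
\begin{equation*}
R_j(x,y;g)
= P_j(x,z(1);g) - P_j(x,z(0);g)
= \int_0^1 \frac{d}{dt} P_j(x, z(t);g)\, dt,
\end{equation*}
which is well defined because $g^{(0)} \in C^\infty(\Om, E_{\sigma_\rho})$ and $L$ is a compact subset of $\Om$. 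Applying the chain rule together with Lemma \ref{lemma:gradient}, the integrand equals
\begin{equation*}
\sum_{|m|=1} (x-y)^m \sum_{|j+m+\ell| = k+1} \frac{1}{\ell!}\, \pa_z^{j+m+\ell} g^{(0)}(z(t))\, (x-z(t))^\ell.
\end{equation*}

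Next I would bound each summand in the $\|\,\cdot\,\|_{\sigma_\rho}$-norm. Lemma \ref{lemma:der.k+1.global} gives $\|\pa_z^{j+m+\ell} g^{(0)}(z(t))\|_{\sigma_\rho} \lesssim KM (\dist(z(t),F))^{\rho-k-1}$; since $\rho - k - 1 \leq 0$ (because $k < \rho \leq k+1$) and $\dist(z(t),F) > |x-y|$, this is at most $KM |x-y|^{\rho-k-1}$. Combined with the elementary bounds $|(x-z(t))^\ell| \leq |x-y|^{|\ell|}$ and $|(x-y)^m| \leq |x-y|$, and noting that the summation constraint $|j+m+\ell| = k+1$ together with $|m|=1$ forces $|m| + |\ell| = k+1 - |j|$, the integrand has $\|\,\cdot\,\|_{\sigma_\rho}$-norm uniformly bounded by $KM |x-y|^{\rho-k-1} \cdot |x-y|^{k+1-|j|} = KM |x-y|^{\rho-|j|}$. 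Integration over $t \in [0,1]$ then yields the claim.

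I do not expect a serious obstacle. The role of Lemma \ref{lemma:gradient} is precisely to reduce the problem to a single application of Lemma \ref{lemma:der.k+1.global} along a path that stays uniformly far from $F$ by exactly the amount $|x-y|$, which is the balance needed to convert the negative exponent $\rho - k - 1$ into the correct factor $|x-y|^{\rho-|j|}$. The only mildly delicate check is the legitimacy of the Banach-valued Riemann integral, which follows at once from the continuity of the integrand on the compact set $[0,1]$ with values in $E_{\sigma_\rho}$.
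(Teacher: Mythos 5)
Your argument is correct and follows essentially the same route as the paper: the paper applies the mean value theorem to $h(z) := P_j(x,z;g)$ along the segment $L$, invoking Lemma \ref{lemma:gradient} for the derivative and Lemma \ref{lemma:der.k+1.global} for the top-order bound, with the identical exponent bookkeeping $(\rho-k-1)+(k+1-|j|)=\rho-|j|$. Writing the difference as a Banach-valued line integral instead of using the mean value inequality is an inessential variation.
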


\begin{proof} 
Let $x,y,j$ be like in the statement. 
The assumption $\dist(L,F) > |x-y|$ implies that $L \subset \Om$,
because every point $z \in L$ satisfies 
$\dist(z,F) \geq \dist(L,F) > |x-y| \geq 0$, 
namely $\dist(z,F) > 0$, therefore $z \in \Om$. 
Moreover for $x=y$ the lemma trivially holds, 
hence we assume that $|x-y| > 0$. 

For all $z \in \Om$, let 
\begin{equation} \label{def hz}
h(z) := P_j(x,z;g)
= \sum_{|j+\ell| \leq k} \frac{1}{\ell !} g^{(j+\ell)}(z) (x-z)^\ell.
\end{equation} 
As observed in subsection \ref{subsec:def g}, 
$g^{(0)}$ is $C^\infty(\Om, E_a)$ for all $a \in \mI$. 
Hence $g^{(j+\ell)} = \pa_z^{j+\ell} g^{(0)}$ is $C^\infty(\Om, E_a)$,
and the function $h$ is also $C^\infty(\Om, E_a)$. 
Moreover
\[
h(x) = P_j(x,x;g) = g^{(j)}(x), 
\quad \ 
h(y) = P_j(x,y; g),
\quad \ 
R_j(x,y;g) = h(x) - h(y).
\]
Since the segment $L$ is contained in $\Om$ 
and since $h \in C^{\infty}(\Om, E_{\sigma_\rho})$, 
by the mean value theorem one has 
\begin{equation} \label{MVT}
\| h(x) - h(y) \|_{\sigma_\rho} 
\leq \sup_{\begin{subarray}{c} z \in L \\ |m|=1 \end{subarray}} 
\| \pa_z^m h (z) \|_{\sigma_\rho} |x-y|.
\end{equation}
By definition \eqref{def hz} and Lemma \ref{lemma:gradient}, 
\[
\| \pa_z^m h(z) \|_{\sigma_\rho} 
= \| \pa_z^m ( P_j(x,z;g)) \|_{\sigma_\rho} 
\lesssim \sum_{|j+m+\ell| = k+1} 
\| \pa_z^{j+m+\ell} g^{(0)}(z) \|_{\sigma_\rho} |x-z|^{|\ell|}.
\]
By Lemma \ref{lemma:der.k+1.global}, 
\[
\| \pa_z^{j+m+\ell} g^{(0)}(z) \|_{\sigma_\rho} 
\lesssim KM (\dist(z,F))^{\rho-k-1} 
\]
for all $z \in L$, $|m|=1$, $|j+m+\ell| = k+1$. 
One has 
\[
(\dist(z,F))^{\rho-k-1} \leq |x-y|^{\rho-k-1}
\]
because $\dist(z,F) \geq \dist(L,F) > |x-y|$ and $\rho-k-1 \leq 0$. 
Also, 
\[
|x-z|^{|\ell|} 
\leq |x-y|^{|\ell|} 
= |x-y|^{k - |j|}
\]
because $|x-z| \leq |x-y|$ for all $z \in L$, 
and $|\ell| = k - |j|$ for $|j+m+\ell| = k+1$ (recall that $|m|=1$). 
As a consequence we obtain
\[
\sup_{\begin{subarray}{c} z \in L \\ |m|=1 \end{subarray}} 
\| \pa_z^m h(z) \|_{\sigma_\rho} 
\lesssim K M |x-y|^{\rho - |j|- 1},
\]
and by \eqref{MVT} the thesis follows.
\end{proof}

Patching Lemma \ref{lemma:segment.close} and Lemma \ref{lemma:segment.small}, 
we obtain property \eqref{goal.2}.

\begin{lemma}
\emph{(Bound for $\| R_j(x,y;g) \|_{\sigma_\rho}$ with $x,y \in \R^n$).}
\label{lemma:Rj.global}
One has 
\[
\| R_j(x,y;g) \|_{\sigma_\rho} \lesssim K M |x-y|^{\rho-|j|} 
\quad \ \forall |j| \leq k, \ \  x,y \in \R^n.
\]
\end{lemma}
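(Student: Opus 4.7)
The strategy is a clean case analysis, patching the three preceding estimates according to the location of $y$ and the relative sizes of $\dist(L,F)$ and $|x-y|$, where $L$ denotes the closed segment in $\R^n$ with endpoints $x,y$.

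First I would dispose of the case $y \in F$. In this situation, $g^{(\ell)}(y) = f^{(\ell)}(y)$ for all $|\ell| \leq k$ by \eqref{1003.1}, so $P_j(x,y;g) = P_j(x,y)$ and therefore $R_j(x,y;g) = g^{(j)}(x) - P_j(x,y)$; the desired estimate then follows directly from Lemma \ref{lemma:g-P.global}, with the constant $K M$ as required.

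Next, suppose $y \in \Om$. Here the comparison between $\dist(L,F)$ and $|x-y|$ decides which tool to use. If $\dist(L,F) \leq |x-y|$, the conclusion is exactly Lemma \ref{lemma:segment.close}. If instead $\dist(L,F) > |x-y|$, then the segment $L$ lies entirely in $\Om$ (as noted in the proof of Lemma \ref{lemma:segment.small}), so Lemma \ref{lemma:segment.small} applies and again yields the bound $K M |x-y|^{\rho-|j|}$.

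Since these three cases exhaust all possible configurations of $(x,y) \in \R^n \times \R^n$ (noting that the case $x \in F$, $y \in F$ is contained in the first case), the conclusion follows. There is no genuine obstacle here: all the work has been done in Lemmas \ref{lemma:g-P.global}, \ref{lemma:segment.close}, and \ref{lemma:segment.small}. The only minor point to check is that the implicit constants in $\lesssim$ in each case depend only on $k,n$ and are multiplied by the same factor $K$, so that the patched bound has the uniform form stated.
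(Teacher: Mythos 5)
Your proposal is correct and follows exactly the same case analysis as the paper: $y\in F$ via Lemma \ref{lemma:g-P.global} (using $P_j(x,y;g)=P_j(x,y)$ since $g=f$ on $F$), and $y\in\Om$ split according to whether $\dist(L,F)\leq |x-y|$ or $\dist(L,F)>|x-y|$, invoking Lemmas \ref{lemma:segment.close} and \ref{lemma:segment.small} respectively. No differences worth noting.
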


\begin{proof}
For $y \in F$, $x \in \R^n$, the inequality holds by Lemma \ref{lemma:g-P.global}. 
For $y \in \Om$, $x \in \R^n$, with $\dist(L,F) \leq |x-y|$ 
($L$ being the segment of endpoints $x,y$), 
the inequality holds by Lemma \ref{lemma:segment.close}. 
For $y \in \Om$, $x \in \R^n$, with $\dist(L,F) > |x-y|$, 
it holds by Lemma \ref{lemma:segment.small}. 
\end{proof}

\subsection{Conclusion of the proof} 
\label{subsec:conclusion}

In the next lemma we summarize what we have proved so far
under the assumptions listed in subsection \ref{subsec:hyp};
in fact, this is Theorem \ref{thm:WET sigma} in the case $\g=1$.

\begin{lemma} \emph{(Extension in the case $\g=1$).}
\label{lemma:summarize.normalized}
Given $f$ $\in \Lip(\rho,F,\sigma,1,\d)$,
there exists $g \in \Lip(\rho,\R^n,\sigma,1,\d)$ 
(defined in subsection \ref{subsec:def g}) 
that coincides with $f$ on $F$ and satisfies 
\begin{equation} \label{op.norm.fg}
\| g \|_{\Lip(\rho, \R^n, \sigma, 1, \d)} 
\leq C \| f \|_{\Lip(\rho, F, \sigma, 1, \d)}
\end{equation}
with $C = C' K_0 K$, where $C'$ depends only on $k,n$. 
The function $g^{(0)} : \R^n \to E_{\sigma_\rho}$ is differentiable $k$ times
at every point $x \in \R^n$, with partial derivatives 
$\pa_x^j g^{(0)}(x) = g^{(j)}(x)$. 
Moreover $g^{(j)}(x) \in E_\infty$ for all $x \in \Om$,  
and $g^{(j)} \in C^\infty(\Om,E_a)$ for all $a \in \mI$. 
The mapping $\Lip(\rho,F,\sigma,1,\d) \to \Lip(\rho,\R^n,\sigma,1,\d)$,
$f \mapsto g$ is a bounded linear operator of norm $\leq C$,
depending on $k, F, \g, \d$ and the family $(S_\theta)_{\theta \geq 1}$,  
and independent of $\rho,\sigma$.
\end{lemma}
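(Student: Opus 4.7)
The plan is to gather the construction of subsection \ref{subsec:def g} and the uniform estimates proved in subsections \ref{subsec:patch}--\ref{subsec:Rj}. Take $g = \{ g^{(j)} : |j| \leq k \}$ to be the collection given by \eqref{1003.1}, \eqref{def g0}, \eqref{def gj}. Linearity of $f \mapsto g$ is immediate from the construction: on $F$ we have $g^{(j)} = f^{(j)}$, while on $\Om$ formula \eqref{def g0} is a locally finite linear combination of vectors $S_{\theta_i}[f^{(\ell)}(p_i)] \in E_\infty$, linear in $f$ because the smoothings $S_\theta$ and point-evaluation are linear, multiplied by polynomials in $x$ and by the cut-offs $\ph_i^*(x)$; \eqref{def gj} is then obtained by $x$-differentiation, which is linear as well. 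The $C^\infty(\Om, E_a)$ regularity of each $g^{(j)}$ for every $a \in \mI$ is built into the construction, since by \eqref{locally finite sum} only finitely many summands contribute near any $x \in \Om$ and each summand is a $C^\infty$ function of $x$ with values in $E_\infty \subseteq E_a$.

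The two inequalities required for membership in $\Lip(\rho,\R^n,\sigma,1,\delta)$ are given directly by the patching lemmas: the uniform bound $\| g^{(j)}(x) \|_{\sigma_j} \lesssim K_0 K M$ for all $x \in \R^n$ and $|j| \leq k$ is Lemma \ref{lemma:g.global}, while the Taylor-remainder bound $\| R_j(x,y;g) \|_{\sigma_\rho} \lesssim K M |x-y|^{\rho-|j|}$ for all $x,y \in \R^n$ is Lemma \ref{lemma:Rj.global}. Combining these two and passing to the infimum over admissible $M$ in the definition \eqref{0903.8} yields \eqref{op.norm.fg} with $C = C' K_0 K$ and $C'$ depending only on $k,n$.

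The step that requires a small further argument is the pointwise $k$-fold differentiability of $g^{(0)}$ at points $x_0 \in F$, together with the identification $\pa_x^j g^{(0)}(x_0) = g^{(j)}(x_0)$ for $|j| \leq k$. I argue by induction on $|j|$, the case $|j|=0$ being trivial. For the inductive step, assume $\pa_x^{j'} g^{(0)} = g^{(j')}$ on all of $\R^n$ for some $|j'| \leq k-1$. Applying Lemma \ref{lemma:Rj.global} to $g^{(j')}$ at $y = x_0 \in F$ and separating the linear terms in $(x-x_0)$ from the higher-order ones in $P_{j'}(x,x_0;g)$ gives
\[
\Big\| g^{(j')}(x) - g^{(j')}(x_0) - \sum_{|e|=1} g^{(j'+e)}(x_0)(x-x_0)^e \Big\|_{\sigma_\rho}
\lesssim K M \bigl( |x-x_0|^{\rho-|j'|} + |x-x_0|^2 \bigr).
\]
Since $|j'| \leq k-1$ and $\rho > k$, the exponent $\rho - |j'|$ exceeds $1$, so the right-hand side is $o(|x-x_0|)$; this is exactly the statement that $g^{(j')}$ is differentiable at $x_0$ in $E_{\sigma_\rho}$ with first partials $g^{(j'+e)}(x_0)$. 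Combined with the inductive hypothesis $g^{(j')} = \pa_x^{j'} g^{(0)}$ on $\R^n$, this yields $\pa_x^{j} g^{(0)}(x_0) = g^{(j)}(x_0)$ for every $|j| \leq k$, closing the induction.

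The true difficulty of the whole theorem lies not in the present lemma but in the remainder estimate of Lemma \ref{lemma:Rj.global}, which rests on the twin decompositions of Lemmas \ref{lemma:ZjmLH} and \ref{lemma:trick theta x} and on the choice of smoothing parameters $\theta_i = q_i^{-1/\delta}$ adapted to the diameters of the Whitney cubes. Once this estimate is in hand, the present lemma is essentially bookkeeping: the construction is linear, smooth off $F$, and the required norm bound and Whitney differentiability at boundary points follow from the uniform estimates already in place.
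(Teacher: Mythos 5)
Your proposal is correct and follows essentially the same route as the paper's own proof: the norm bound is read off from Lemmas \ref{lemma:g.global} and \ref{lemma:Rj.global}, and the remaining assertions (linearity, smoothness on $\Om$, independence of $\rho,\sigma$) are by construction. The only place you add substance is the induction establishing $k$-fold differentiability at points of $F$ from the remainder estimate; the paper merely asserts that this "follows from Lemma \ref{lemma:Rj.global}", and your argument (using $\rho-|j'|>1$ to get an $o(|x-x_0|)$ error) is a valid way to fill in that step.
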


\begin{proof}
Let $f \in \Lip(\rho,F,\sigma,1,\d)$, 
and let $M \geq 0$ be a constant such that 
\eqref{0903.1.normalized}-\eqref{0903.2.normalized} hold.
By Lemma \ref{lemma:g.global} and Lemma \ref{lemma:Rj.global}, 
there exists a constant $C'>0$ depending only on $k,n$
such that the collection $g$ defined in subsection \ref{subsec:def g} 
satisfies \eqref{goal.1}-\eqref{goal.2} with $C = C' K_0 K$. 
Hence, recalling Definition \ref{def Lip sigma}, 
$g$ belongs to $\Lip(\rho, \R^n, \sigma, 1, \d)$, 
and, taking the inf over all constants $M$ 
such that \eqref{0903.1.normalized}-\eqref{0903.2.normalized} hold,
we get \eqref{op.norm.fg}. 
The inequality in Lemma \ref{lemma:Rj.global} also implies that 
the function $g^{(0)} : \R^n \to E_{\sigma_\rho}$ is $k$ times differentiable 
at every point $x \in \R^n$, with partial derivatives 
$\pa_x^j g^{(0)} (x) = g^{(j)}(x)$ for all $|j| \leq k$, all $x \in \R^n$.

By construction (see subsection \ref{subsec:def g}), 
$g$ coincides with $f$ on $F$,
$g^{(j)}(x) \in E_\infty$ for all $x \in \Om$,  
and $g^{(j)} \in C^\infty(\Om,E_a)$ for all $a \in \mI$.  
Moreover, by construction, the mapping $f \mapsto g$ is linear.
The definition of $g$ in subsection \ref{subsec:def g} 
involves $f, k, F, \g, \d$ and the family $(S_\theta)_{\theta \geq 1}$,  
and does not directly involve $\rho, \sigma$. 
\end{proof}

Now we complete the proof of the main theorem. 

\begin{proof}[\emph{\textbf{Proof of Theorem \ref{thm:WET sigma}}}] 
[The proof follows from Lemma \ref{lemma:summarize.normalized} 
by an elementary rescaling argument.] 
Assume the hypotheses of Theorem \ref{thm:WET sigma}.
In particular, let $f \in \Lip(\rho,F,\sigma,\g,\d)$ 
(here $\g$ is any positive real number), and let $M \geq 0$ be a constant such that 
\eqref{0903.1-2} holds.
We define another closed subset of $\R^n$,
\begin{equation}
\label{def F tilde}
\tilde F := \{ x \in \R^n : \g x \in F \} 
= \{ \g^{-1} y : y \in F \}, 
\end{equation}
and another collection $\tilde f := \{ \tilde f^{(j)} : |j| \leq k \}$
of functions, 
\begin{equation}
\label{def f tilde}
\tilde f^{(j)}(x) := \g^{|j|} f^{(j)}( \g x) 
\quad \ \forall |j| \leq k, \ \ x \in \tilde F.
\end{equation} 
By \eqref{def f tilde}, \eqref{def F tilde}, \eqref{0903.1-2},
one has 
\begin{equation}
\label{ok f tilde.1}
\| \tilde f^{(j)}(x) \|_{\sigma_j} 
= \g^{|j|} \| f^{(j)}( \g x) \|_{\sigma_j} 
\leq M
\quad \ \forall |j| \leq k, \ \ x \in \tilde F.
\end{equation}
For $x,y \in \tilde F$, let $R_j(x,y;\tilde f)$ 
denote the Taylor remainders of $\tilde f$, namely 
\begin{equation} \label{def Rj f tilde}
R_j(x,y;\tilde f) := \tilde f^{(j)}(x) - \sum_{|j+\ell| \leq k} 
\frac{1}{\ell!} \tilde f^{(j+\ell)}(y) (x-y)^\ell.
\end{equation}
For all $\ell \in \N^n$, all $x,y \in \R^n$ 
one has $(x-y)^\ell = \g^{-|\ell|} (\g x - \g y)^\ell$; 
therefore, by \eqref{def f tilde}, \eqref{def Rj f tilde},
\begin{align} 
R_j(x,y;\tilde f) 
& = \g^{|j|} f^{(j)}(\g x) 
- \sum_{|j+\ell| \leq k} \frac{1}{\ell!} \g^{|j|+|\ell|} f^{(j+\ell)}(\g y) 
\g^{-|\ell|} (\g x-\g y)^\ell
\notag \\ & 
= \g^{|j|} R_j(\g x, \g y;f)
\qquad  \forall |j| \leq k, \ \ x,y \in \tilde F
\label{Rj Rj tilde}
\end{align}
where $R_j(x,y;f) := R_j(x,y)$ is defined in \eqref{def Pj Rj}.
By \eqref{Rj Rj tilde}, \eqref{0903.1-2}, 
one has
\begin{align}
\| R_j(x,y;\tilde f) \|_{\sigma_\rho} 
& = \g^{|j|} \| R_j(\g x, \g y;f) \|_{\sigma_\rho}
\notag \\ & 
\leq \g^{|j|} \g^{-\rho} M |\g x - \g y|^{\rho - |j|}
= M |x-y|^{\rho - |j|}
\quad \  \forall |j| \leq k, \ \ x,y \in \tilde F.
\label{ok f tilde.2}
\end{align}
By \eqref{ok f tilde.1}, \eqref{ok f tilde.2}, 
$\tilde f$ belongs to $\Lip(\rho, \tilde F, \sigma, 1, \d)$ 
and Lemma \ref{lemma:summarize.normalized} can be applied to $\tilde f$.

Let $\tilde g \in \Lip(\rho, \R^n, \sigma, 1, \d)$ 
be the extension of $\tilde f$ given by Lemma \ref{lemma:summarize.normalized}.
Thus $\tilde g$ satisfies
\begin{equation} 
\label{ok g tilde.1}
\| \tilde g^{(j)}(x) \|_{\sigma_j} \leq C M,
\quad \ 
\| R_j(x,y;\tilde g) \|_{\sigma_\rho} \leq C M |x-y|^{\rho-|j|} 
\quad \  
\forall |j| \leq k, \ \ x,y \in \R^n,
\end{equation}
with $C$ given by Lemma \ref{lemma:summarize.normalized}.
We define 
\begin{equation}
\label{def gj gj tilde}
g^{(j)}(x) := \g^{-|j|} \tilde g^{(j)}(\g^{-1} x) 
\quad \ \forall |j| \leq k, \ \ x \in \R^n.
\end{equation}
For all $x \in F$ one has $\g^{-1} x \in \tilde F$, 
and therefore 
\begin{equation} \label{bic.0} 
g^{(j)}(x) 
= \g^{-|j|} \tilde g^{(j)}(\g^{-1} x) 
= \g^{-|j|} \tilde f^{(j)}(\g^{-1} x) 
= f^{(j)}(x) 
\quad \ \forall |j| \leq k, \ \ x \in F
\end{equation}
by \eqref{def gj gj tilde}, \eqref{def f tilde} 
and because $\tilde g = \tilde f$ on $\tilde F$
by Lemma \ref{lemma:summarize.normalized}.
Thus $g$ is an extension of $f$ to $\R^n$. 
Moreover
\begin{equation} \label{bic.1} 
\g^{|j|} \| g^{(j)}(x) \|_{\sigma_j} 
= \| \tilde g^{(j)}(\g^{-1} x) \|_{\sigma_j} 
\leq CM
\quad \ \forall |j| \leq k, \ \ x \in \R^n
\end{equation}
by \eqref{def gj gj tilde}, \eqref{ok g tilde.1}. 
By \eqref{def gj gj tilde} one has 
\begin{align}
R_j(x,y;g) 
& := g^{(j)}(x) - \sum_{|j+\ell| \leq k} \frac{1}{\ell!} g^{(j+\ell)}(y) (x-y)^\ell
\notag \\ 
& = \g^{-|j|} \tilde g^{(j)}(\g^{-1} x) 
- \sum_{|j+\ell| \leq k} \frac{1}{\ell!} \g^{-|j|-|\ell|} \tilde g^{(j+\ell)}(\g^{-1} y) 
\g^{|\ell|} (\g^{-1} x - \g^{-1} y)^\ell
\notag \\ 
& = \g^{-|j|} R_j(\g^{-1} x, \g^{-1} y ; \tilde g)
\qquad \forall |j| \leq k, \ \ x,y \in \R^n.
\label{rescal Rj}
\end{align}
Hence, by \eqref{rescal Rj}, \eqref{ok g tilde.1}, 
\begin{align}
\g^\rho \| R_j(x,y;g) \|_{\sigma_\rho} 
& = \g^{\rho-|j|} \| R_j( \g^{-1} x, \g^{-1} y; \tilde g) \|_{\sigma_\rho} 
\notag \\ & 
\leq \g^{\rho-|j|} CM |\g^{-1} x - \g^{-1} y|^{\rho-|j|}
\notag \\ &
= C M |x-y|^{\rho-|j|}
\quad \ \forall |j| \leq k, \ \ x,y \in \R^n.
\label{bic.2}
\end{align}
From \eqref{bic.1}, \eqref{bic.2} and Definition \ref{def Lip sigma}
it follows that $g \in \Lip(\rho,\R^n,\sigma,\g,\d)$
and, taking the inf over all $M$ such that 
\eqref{0903.1-2} holds, 
\begin{equation} \label{bic.4}
\| g \|_{\Lip(\rho,\R^n,\sigma,\g,\d)} \leq 
C \| f \|_{\Lip(\rho,F,\sigma,\g,\d)}.
\end{equation}

By \eqref{bic.0}, $g$ is an extension of $f$ to $\R^n$.  
The regularity properties of $g$ on $\Om$, 
namely $g^{(j)}(x) \in E_\infty$ for all $x \in \Om$,
and $g^{(j)} \in C^\infty(\Om,E_a)$ for all $a \in \mI$, 
follow from the corresponding properties of $\tilde g$
given by Lemma \ref{lemma:summarize.normalized}.
From the construction in subsection \ref{subsec:def g}
and the linearity of the rescaling operator $f \mapsto \tilde f$ in \eqref{def f tilde}
and its inverse $\tilde g \mapsto g$ in \eqref{def gj gj tilde}
we deduce that the extension map $f \mapsto g$ is a linear operator, 
with bounded operator norm $\leq C$ (see \eqref{bic.4}).
The proof of Theorem \ref{thm:WET sigma} is complete.
\end{proof}

\section{Proofs about the dyadic cubes decomposition}
\label{sec:app.cubes}

Following Chapter VI, section 1 of \cite{S}, 
in this appendix we provide detailed references or direct proofs 
for the results stated in subsection \ref{subsec:cubes}.

\begin{definition} \label{def:touch} 
\emph{(Disjoint cubes, cubes that touch)}. 
We say that two cubes (see Definition \ref{def:cubes})
are \emph{disjoint} 
if their interiors are disjoint. 
We say that two disjoint cubes \emph{touch} 
if their boundaries have a common point.
\end{definition}

\begin{theorem} \label{thm:Stein.167}
\emph{(\cite{S}, Theorem 1 on page 167)}.
Let $F \subset \R^n$ be a closed set, and let $\Om = \R^n \setminus F$.
There exists a collection $\mF = \{ Q_1, Q_2, \ldots, Q_i, \ldots \}$ 
of mutually disjoint cubes such that 
\[
\bigcup_{i=1}^\infty Q_i = \Om,
\qquad  
\diam(Q_i) \leq \dist(Q_i,F) \leq 4 \diam (Q_i) \quad \ \forall i=1,2,\ldots
\]
\end{theorem}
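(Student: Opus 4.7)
The plan is to follow Whitney's classical construction based on the standard dyadic mesh of $\R^n$. For each $k \in \Z$, let $\mathcal{M}_k$ denote the family of closed cubes of side $2^{-k}$ with vertices in $2^{-k}\Z^n$, so that every $Q \in \mathcal{M}_k$ satisfies $\diam(Q) = \sqrt n \, 2^{-k}$. I would decompose $\Om$ into the pairwise disjoint layers
\[
\Om_k := \bigl\{ x \in \Om : 2\sqrt n \, 2^{-k} \leq \dist(x,F) < 4\sqrt n \, 2^{-k} \bigr\}, \qquad k \in \Z,
\]
whose union is $\Om$, because $\dist(x,F)$ is a positive finite real for every $x \in \Om$ (the degenerate case $F = \emptyset$ can be excluded, since otherwise the statement of the theorem is vacuous).

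Next I would collect the preliminary family $\mF_0$ of all dyadic cubes $Q$ such that $Q \in \mathcal{M}_k$ and $Q \cap \Om_k \neq \emptyset$ for some $k \in \Z$. Since every $x \in \Om$ lies in a unique layer $\Om_k$ and in a cube of $\mathcal{M}_k$, the family $\mF_0$ covers $\Om$. Choosing a witness $x \in Q \cap \Om_k$ and using the triangle inequality together with $\diam(Q) = \sqrt n \, 2^{-k}$ and the bounds defining $\Om_k$, I would derive
\[
\diam(Q) \leq \dist(Q,F) \leq 4 \diam(Q) \qquad \forall Q \in \mF_0,
\]
the upper bound from $\dist(Q,F) \leq \dist(x,F) < 4\sqrt n \, 2^{-k}$ and the lower bound from $\dist(z,F) \geq \dist(x,F) - |z-x| \geq \sqrt n \, 2^{-k}$ for every $z \in Q$.

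The final step is to extract a disjoint subfamily. Any two dyadic cubes are either interior-disjoint or nested; furthermore, along any chain $Q \subset Q' \subset Q'' \subset \cdots$ of cubes in $\mF_0$ we have $\diam(Q^{(j)}) \leq \dist(Q^{(j)},F) \leq \dist(Q,F) < \infty$, so only finitely many supercubes of $Q$ can belong to $\mF_0$. Hence every $Q \in \mF_0$ is contained in a unique maximal element of $\mF_0$, and the subcollection $\mF$ of maximal cubes is pairwise interior-disjoint, still covers $\Om$, and automatically satisfies $\diam(Q_i) \leq \dist(Q_i,F) \leq 4 \diam(Q_i)$ since each $Q_i$ already lies in $\mF_0$. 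The main obstacle is precisely this maximality step, which requires exploiting the upper distance bound to rule out infinite ascending chains; once this is in place, the remainder of the argument is routine bookkeeping on the dyadic mesh, and the subsequent claims of subsection \ref{subsec:cubes} (expanded cubes $Q_i^*$ with overlap bounded by $12^n$, the distance estimates, etc.) follow by direct verification from Definitions \ref{def:cubes}-\ref{def:Q*} and this basic decomposition.
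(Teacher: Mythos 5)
Your proposal is correct and is essentially the classical Whitney--Stein construction that the paper itself invokes (the theorem is quoted from \cite{S}, Theorem 1 on page 167, and the paper relies on exactly this dyadic-mesh decomposition, including the form \eqref{dyadic.form} of the cubes that your argument produces). The only microscopic omission is the reverse inclusion $\bigcup_i Q_i \subseteq \Om$, which follows at once from $\dist(Q_i,F) \geq \diam(Q_i) > 0$.
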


From Theorem \ref{thm:Stein.167} one has directly property \eqref{dist.QF.q}
of Proposition \ref{prop:cubes}.

By the construction in \cite{S}, 
every cube in the collection $\mF$ 
is the product of $n$ intervals of the form 
\begin{equation} \label{dyadic.form}
Q = I_1 \times \ldots \times I_n, 
\quad \ 
I_j = [2^{-k} a_j, 2^{-k} (a_j + 1)], 
\quad \ j = 1, \ldots, n; 
\qquad a_j,k \in \Z. 
\end{equation}
The length of every edge of the cube $Q$ in \eqref{dyadic.form} 
is $2^{-k}$, and its diameter is $2^{-k} \sqrt{n}$. 
This implies that $q_i > 0$ for all $i$, as stated in Proposition \ref{prop:cubes}. 

The expanded cubes $Q_i^*$ are defined in Definition \ref{def:Q*} 
with expansion factor $\lm = 1+\e$, 
$\e := \frac{1}{8 \sqrt{n}}$. 
Thus $\diam (Q_i^*) = \lm \diam(Q_i)$, 
and \eqref{diam.Q*.q} in Proposition \ref{prop:cubes} follows immediately.  

The other properties in Proposition \ref{prop:cubes} 
are stated slightly differently than in \cite{S}, 
because, both for simplicity and by the special role  
played by $q_i$ in this paper (see \eqref{def theta}), 
we have chosen $q_i$ as a unique reference 
in the estimates of Proposition \ref{prop:cubes}.
Hence now we prove the rest of Proposition \ref{prop:cubes}, 
and we start with the following observation. 

\begin{lemma} \label{lemma:servizio.6}
For every $y \in Q_i^*$ there exists $x \in Q_i$ such that 
$|x-y| \leq (\e/2) q_i$.
\end{lemma}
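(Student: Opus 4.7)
The plan is to unwind the parametrization of $Q_i$ and its expansion $Q_i^*$ around their common center, and then exhibit the required point $x$ explicitly in terms of $y$ by a linear rescaling. By Definition \ref{def:cubes} I can write $Q_i = \{c_i + v : v \in [-r_i,r_i]^n\}$ for some center $c_i \in \R^n$ and some half-edge $r_i > 0$, so that $q_i = \diam(Q_i) = 2 r_i \sqrt{n}$, i.e.\ $r_i = q_i/(2\sqrt{n})$. By Definition \ref{def:Q*}, with $\lambda = 1 + \e$, the expanded cube has the form $Q_i^* = \{c_i + \lambda v : v \in [-r_i,r_i]^n\}$, and shares the center $c_i$ with $Q_i$.

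Given $y \in Q_i^*$, I would pick $v \in [-r_i,r_i]^n$ such that $y = c_i + \lambda v$, and define the candidate $x := c_i + v$, which lies in $Q_i$ by construction. Then $y - x = (\lambda - 1) v = \e v$, hence
\[
|y - x| = \e\, |v| \leq \e\, r_i \sqrt{n} = \frac{\e}{2}\, q_i,
\]
which is exactly the claimed bound.

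I do not expect any real obstacle here: the statement is essentially a one-line computation once the parametrizations of $Q_i$ and $Q_i^*$ are spelled out. The only point to keep track of is that the $\sqrt{n}$ coming from the elementary bound $|v| \leq r_i \sqrt{n}$ (valid for $v \in [-r_i,r_i]^n$) cancels exactly the $\sqrt{n}$ appearing in the relation $r_i = q_i/(2\sqrt{n})$, producing the clean factor $\e/2$. Note that the specific numerical value $\e = 1/(8\sqrt{n})$ fixed in Definition \ref{def:Q*} plays no role in the argument: what matters is only the geometric fact that $Q_i^*$ is the dilation of $Q_i$ by factor $1+\e$ around their common center.
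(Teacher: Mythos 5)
Your proof is correct and is essentially identical to the paper's own argument: both parametrize $Q_i$ and $Q_i^*$ around the common center, take $x$ to be the preimage of $y$ under the dilation by $1+\e$, and use $|v|\leq r_i\sqrt{n}$ together with $q_i = 2r_i\sqrt{n}$. No issues.
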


\begin{proof}
Let $Q_i$ be the cube 
$\{ p + v : v \in [-r,r]^n \}$. 
Then the expanded cube $Q_i^*$ 
is the set $\{ p + (1+\e)v : v \in [-r,r]^n \}$. 
Let $y \in Q_i^*$. 
Hence $y = p + (1+\e)v$ for some $v \in [-r,r]^n$. 
Let $x := p + v$. Then $x \in Q_i$, and 
$|y-x| = \e|v| \leq \e \sqrt{n}\, r 
= (\e/2) q_i$ 
because $q_i = \diam(Q_i) = 2r \sqrt{n}$.
\end{proof}

\begin{proof}[Proof of \eqref{dist.Q*F.q}]
Since $Q_i \subset Q_i^*$, one has $\dist(Q_i^*,F) \leq \dist(Q_i,F)$ 
by definition of distance. Also, $\dist(Q_i,F) \leq 4 q_i$ by \eqref{dist.QF.q},
therefore $\dist(Q_i^*,F) \leq 4 q_i$, which is the second inequality 
in \eqref{dist.Q*F.q}. 

Now take $y \in Q_i^*$ that realizes $\dist(Q_i^*,F) = \dist(y,F)$, 
and let $x \in Q_i$ be the point such that $|x-y| \leq (\e/2) q_i$ 
given by Lemma \ref{lemma:servizio.6}. 
For all $z \in F$ one has 
$|x-z| \leq |x-y| + |y-z|$, and therefore, 
taking the infimum over all $z \in F$, 
$\dist(x,F) \leq |x-y| + \dist(y,F)$. 
From the properties of $x,y$ we deduce that 
$\dist(x,F) \leq (\e/2) q_i + \dist(Q_i^*,F)$. 
Next, $\dist(Q_i,F) \leq \dist(x,F)$ because $x \in Q_i$ (definition of distance), 
and $q_i \leq \dist(Q_i,F)$ by \eqref{dist.QF.q}. 
Therefore $q_i \leq (\e/2) q_i + \dist(Q_i^*,F)$, 
whence the first inequality in \eqref{dist.Q*F.q} follows 
because $1/2 \leq 1 - (\e/2)$.
\end{proof}

\begin{proof}[Proof of the covering identity $\Om = \cup_{i=1}^\infty Q_i^*$]
One has $Q_i \subset Q_i^*$ by definition of expanded cubes, 
and $\Om = \cup_i Q_i$ by Theorem \ref{thm:Stein.167}. 
Hence $\Om = \cup_i Q_i \subseteq \cup_i Q_i^*$. 

On the other hand, by \eqref{dist.Q*F.q}, one has 
$\dist(Q_i^*,F) \geq \frac12 q_i > 0$, 
whence it follows that $Q_i^*$ and $F$ have no common point, 
namely $Q_i^* \subseteq \Om$ for all $i$; 
therefore $\cup_i Q_i^* \subseteq \Om$. 
\end{proof}

\begin{proof}[Proof of \eqref{dist.xF.q}]
Let $x \in Q_i^*$. One has $\dist(Q_i^*,F) \leq \dist(x,F)$ because $x \in Q_i^*$,
and $\frac12 q_i \leq \dist(Q_i^*,F)$ by \eqref{dist.Q*F.q},
whence we get the first inequality in \eqref{dist.xF.q}.

For all $z \in Q_i^*$ one has $|x-z| \leq \diam(Q_i^*)$ because $x,z \in Q_i^*$, 
and $\diam(Q_i^*) \leq 2 q_i$ by \eqref{diam.Q*.q}. 
Hence for all $y \in F$ one has 
$|x-y| \leq |x-z| + |z-y|
\leq 2 q_i + |z-y|$. 
Taking the infimum over all $z \in Q_i^*$, $y \in F$, 
we obtain $\dist(x,F) \leq 2 q_i + \dist(Q_i^*,F)$. 
By \eqref{dist.Q*F.q}, $\dist(Q_i^*,F) \leq 4 q_i$, 
and we get the second inequality in \eqref{dist.xF.q}.
\end{proof}

\begin{proof}[Proof of \eqref{dist.xpi.q}]
Let $p_i$ be as in \eqref{def.pi}, 
and let $x \in Q_i^*$. 
By Lemma \ref{lemma:servizio.6} there exists $y \in Q_i$ 
such that $|x-y| \leq (\e/2) q_i$. 
Let $z \in Q_i$ be a point that realizes $\dist(Q_i,p_i) = |z-p_i|$.  
By triangular inequality, 
$|x-p_i| \leq |x-y| + |y-z| + |z-p_i|$. 
Now $|x-y| \leq (\e/2) q_i \leq q_i$;
next, $|y-z| \leq \diam(Q_i) = q_i$ because both $y,z \in Q_i$;
next, $|z-p_i| = \dist(Q_i,p_i) = \dist(Q_i,F)$ by \eqref{def.pi}, 
and $\dist(Q_i,F) \leq 4q_i$ by \eqref{dist.QF.q}. 
The sum of the three terms then gives $|x-p_i| \leq q_i + q_i + 4 q_i$, 
which is the second inequality in \eqref{dist.xpi.q}. 

One has $\frac12 q_i \leq \dist(Q_i^*,F)$ by \eqref{dist.Q*F.q},
and $\dist(Q_i^*,F) \leq |x-p_i|$ because $x \in Q_i^*$, $p_i \in F$
(definition of distance). This immediately implies the first inequality 
 in \eqref{dist.xpi.q}. 
\end{proof}

\begin{proof}[Proof of \eqref{dist.xy.q}]
Let $x \in Q_i^*$, $y \in F$. 
Then $\dist(Q_i^*,F) \leq |x-y|$ by definition of distance, 
and $\frac12 q_i \leq \dist(Q_i^*,F)$ by \eqref{dist.Q*F.q}. 
Hence $\frac12 q_i \leq |x-y|$. 
\end{proof}

To complete the proof of Proposition \ref{prop:cubes}, 
it remains to prove that every point $x \in \Om$ admits a neighborhood 
$B_x \subset \Om$ intersecting at most $12^n$ expanded cubes $Q_i^*$. 
This is the content of Proposition 3 on page 169 of \cite{S} and of its short proof; 
we give here a slightly different, more detailed proof. 
Let us begin with another result from \cite{S}. 

\begin{prop} \label{prop:Prop.2.Stein.169}
\emph{(\cite{S}, Proposition 2 on page 169)}.
Suppose $Q \in \mF$. 
Then there are at most $12^n$ cubes in $\mF$ which touch $Q$.
\end{prop}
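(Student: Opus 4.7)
The plan is to prove the proposition in three steps: a sidelength comparison, a localization, and a volume count. All three rely on Theorem \ref{thm:Stein.167} and the dyadic nature of the cubes in $\mF$.

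For the sidelength comparison, suppose $Q, Q' \in \mF$ have sidelengths $s, s'$ and touch at a point $x \in \partial Q \cap \partial Q'$. Triangle inequality together with Theorem \ref{thm:Stein.167} gives
\[
\diam(Q') \leq \dist(Q', F) \leq \dist(x, F) \leq \dist(Q, F) + \diam(Q) \leq 5 \diam(Q),
\]
so $s' \leq 5 s$. Since both sidelengths are powers of $2$, this sharpens to $s' \leq 4 s$; by symmetry, $s/4 \leq s' \leq 4 s$. For the localization, if $c$ denotes the center of $Q$, then for any $y \in Q'$ we have
\[
|y - c|_\infty \leq |y - x|_\infty + |x - c|_\infty \leq s' + s/2 \leq 9 s / 2,
\]
so every cube $Q' \in \mF$ touching $Q$ lies inside the cube $\tilde Q$ concentric with $Q$ of sidelength $9 s$.

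Finally, the cubes of $\mF$ that touch $Q$ have pairwise disjoint interiors (Definition \ref{def:touch}) and volumes at least $(s/4)^n$ each, and they all fit inside $\tilde Q$, whose volume is $(9 s)^n$; therefore their number is at most $36^n$. The sharper constant $12^n$ stated in the proposition (and attributed to \cite{S}) is obtained by exploiting the dyadic alignment of $Q'$ within $\tilde Q$ together with the disjointness of each $Q'$ from the interior of $Q$. The only technical subtlety in this plan is the sharpening from the crude $36^n$ to the stated $12^n$; this improvement is not needed in the applications made in this paper, where only finiteness of the bound (with a constant depending solely on $n$) is used. Thus the main content of the proof is the sidelength comparison, whose heart is the two-sided estimate on $\dist(Q_i, F)$ provided by Theorem \ref{thm:Stein.167}.
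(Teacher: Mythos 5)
Your argument is sound as far as it goes, and it is worth noting that the paper itself gives no proof of this proposition: it is imported verbatim from Stein, so there is nothing internal to compare against. Your three steps are all correct. The chain $\diam(Q')\leq\dist(Q',F)\leq\dist(x,F)\leq\dist(Q,F)+\diam(Q)\leq 5\diam(Q)$ is valid (each inequality follows from Theorem \ref{thm:Stein.167} or the triangle inequality), the dyadic form \eqref{dyadic.form} of the sidelengths sharpens $s'\leq 5s$ to $s'\leq 4s$, the localization of every touching cube inside the concentric cube $\tilde Q$ of sidelength $9s$ is right, and the packing count then gives at most $(9s)^n/(s/4)^n=36^n$ touching cubes.

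The gap is the constant: the statement asserts $12^n$, and what you have actually proved is $36^n$. The sentence asserting that $12^n$ ``is obtained by exploiting the dyadic alignment \dots together with the disjointness of each $Q'$ from the interior of $Q$'' points in the right direction but is not a proof. To execute it: a touching cube $Q'$ is dyadic, has sidelength in $[s/4,4s]$, and its interior is disjoint from that of $Q$, so near the contact point the interior of $Q'$ must meet the interior of one of the $3^n-1$ lattice cubes of sidelength $s$ adjacent to $Q$; by dyadic nesting $Q'$ then either contains that lattice cube or is contained in it, and each such lattice cube meets the interiors of at most $4^n$ pairwise non\-overlapping dyadic cubes of sidelength $\geq s/4$, giving the bound $(3^n-1)\,4^n<12^n$. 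That said, you are right that the sharp constant is irrelevant here: replacing $12^n$ by $36^n$ in Proposition \ref{prop:cubes}, Proposition \ref{prop:partition of unity} and Lemma \ref{lemma:good} would change nothing downstream, since this quantity is only ever absorbed into constants depending on $n$. So either complete the dyadic count as above, or state and prove the proposition with $36^n$.
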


\begin{lemma} \label{lemma:servizio.do not touch}
If $Q_i, Q_j \in \mF$ do not touch, then 
$\dist(Q_i,Q_j) 
\geq \frac{1}{\sqrt{n}} \, \min \{ q_i, q_j \}$.
\end{lemma}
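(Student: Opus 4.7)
The idea is to exploit the dyadic structure \eqref{dyadic.form}: the fact that the vertices of each $Q_i \in \mF$ lie on a dyadic lattice forces the gap between two non-touching cubes to be at least as large as the edge of the smaller one.

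Concretely, I would write $Q_i = I_1^{(i)} \times \cdots \times I_n^{(i)}$ and $Q_j = I_1^{(j)} \times \cdots \times I_n^{(j)}$, where each $I_s^{(i)}$ is a dyadic interval of length $2^{-k_i}$ with endpoints in $2^{-k_i}\Z$, and similarly for $Q_j$. Without loss of generality, assume $k_i \geq k_j$, so that $q_i = 2^{-k_i}\sqrt{n} \leq q_j$ and $\min\{q_i,q_j\} = q_i$. Since $k_j \leq k_i$, we have $2^{-k_j}\Z \subseteq 2^{-k_i}\Z$, so \emph{all} the endpoints of all the intervals $I_s^{(i)}$ and $I_s^{(j)}$ lie in $2^{-k_i}\Z$.

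Next I would argue that $Q_i$ and $Q_j$ have empty intersection. Being in $\mF$ they have disjoint interiors (Theorem \ref{thm:Stein.167}, where the collection is described as ``mutually disjoint''); since moreover they do not touch (Definition \ref{def:touch}), their boundaries have no common point either, so $Q_i \cap Q_j = \emptyset$. Writing $Q_i \cap Q_j = \prod_{s=1}^n (I_s^{(i)} \cap I_s^{(j)})$, at least one factor must be empty: there exists a coordinate $s$ with $I_s^{(i)} \cap I_s^{(j)} = \emptyset$. On $\R$ this means one interval lies strictly to the left of the other, say $\sup I_s^{(i)} < \inf I_s^{(j)}$; both of these endpoints belong to $2^{-k_i}\Z$, so their difference is a strictly positive multiple of $2^{-k_i}$ and therefore at least $2^{-k_i}$.

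The conclusion is then immediate: for any $x \in Q_i$ and $y \in Q_j$,
\[
|x-y| \geq |x_s - y_s| \geq \inf I_s^{(j)} - \sup I_s^{(i)} \geq 2^{-k_i} = \frac{q_i}{\sqrt{n}} = \frac{\min\{q_i,q_j\}}{\sqrt{n}},
\]
giving the claimed estimate after taking the infimum. The only real step is isolating the separating coordinate and recognising that its ``gap'' is quantised by the lattice spacing of the smaller cube; everything else is bookkeeping. I do not foresee a genuine obstacle: the main point to state cleanly is the lattice-inclusion $2^{-k_j}\Z \subseteq 2^{-k_i}\Z$, which is what converts the soft statement ``non-overlapping dyadic intervals'' into a quantitative lower bound on their distance.
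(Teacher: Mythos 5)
Your proof is correct and follows essentially the same route as the paper: isolate a coordinate in which the two dyadic intervals are disjoint, and use the dyadic lattice structure (the paper multiplies by $2^{k}$ to see the gap is a positive integer multiple of the finer spacing; you phrase it as the lattice inclusion $2^{-k_j}\Z \subseteq 2^{-k_i}\Z$) to conclude the gap is at least the edge length $2^{-k_i} = q_i/\sqrt{n}$ of the smaller cube. The two arguments are the same in substance.
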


\begin{proof}
Since $Q_i, Q_j \in \mF$, they are of the form \eqref{dyadic.form}, 
namely $Q_i = I_1 \times \ldots \times I_n$, 
$Q_j = J_1 \times \ldots \times J_n$, 
with
\[
I_m = [2^{-k} a_m, 2^{-k} (a_m + 1)], 
\quad \ 
J_m = [2^{-h} b_m, 2^{-h} (b_m + 1)],
\quad \ 
m=1,\ldots,n,
\]
for some $a_m, b_m, k, h \in \Z$; 
we suppose that $k \geq h$. 
A vector $x = (x_1, \ldots, x_n) \in \R^n$ belongs to $Q_i \cap Q_j$ 
if and only if 
$x_m \in I_m \cap J_m$ for all $m=1,\ldots,n$. 
This means that $Q_i, Q_j$ intersect iff $I_m, J_m$ intersect for all $m$. 
By assumption, $Q_i, Q_j$ do not intersect: 
hence $I_m \cap J_m$ is empty for some $m$. 
Since $I_m, J_m$ are intervals of the real line,
this implies that $\max I_m < \min J_m$ or vice versa. 
Therefore, multiplying by $2^k$,  
\begin{equation} \label{4.ago.1}
2^{k-h} b_m - (a_m + 1) > 0
\quad \text{or} \quad 
a_m - 2^{k-h} (b_m+1)  > 0.
\end{equation}
Now $a_m, b_m, 2^{k-h}$ are all integers (because $k \geq h$). 
Then the quantities in \eqref{4.ago.1} are positive integers, 
and hence 
``$> 0$'' in \eqref{4.ago.1} can be replaced by ``$\geq 1$''. 
Dividing by $2^k$, this gives 
$2^{-h} b_m \geq 2^{-k} (a_m+1) + 2^{-k}$
or 
$2^{-k} a_m \geq 2^{-h} (b_m+1) + 2^{-k}$, 
namely
\[
\min (J_m) \geq \max (I_m) + 2^{-k} 
\quad \text{or} \quad 
\min (I_m) \geq \max (J_m) + 2^{-k}. 
\]
Then $|t-s| \geq 2^{-k}$ for all $t \in I_m$, $s \in J_m$.  
Given any $x \in Q_i$, $y \in Q_j$, 
their $m$-th coordinates satisfy $x_m \in I_m$, $y_m \in J_m$, 
and therefore $|x-y| \geq |x_m-y_m| \geq 2^{-k}$. 
Passing to the infimum over all $x \in Q_i$, $y \in Q_j$, 
we get $\dist(Q_i,Q_j) \geq 2^{-k}$. 
Note that $2^{-k}$ is the length of the edge of $Q_i$,
$2^{-h}$ the one of $Q_j$, 
and $2^{-k}$ is the minimum of the two lenghts.
Finally the edge of a cube is equal to its diameter divided by $\sqrt{n}$.
\end{proof}

\begin{lemma} \label{lemma:servizio.5}
Let $Q_i, Q_j \in \mF$, with $q_j \leq q_i$.
Let $x_i \in Q_i$, $x_j \in Q_j$, 
with $|x_i - x_j| \leq \e q_i$. 
Then the cubes $Q_i , Q_j$ touch.
\end{lemma}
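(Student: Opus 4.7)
The plan is to argue by contradiction, combining Lemma~\ref{lemma:servizio.do not touch} with the Whitney size/distance bound \eqref{dist.QF.q} to rule out the possibility that $Q_i$ and $Q_j$ are distinct and non-touching.

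First I would dispose of the trivial case: if $Q_i = Q_j$ there is nothing to prove (or the statement is interpreted in the obvious way). So assume $Q_i \neq Q_j$; since both belong to the mutually disjoint collection $\mF$, they are disjoint in the sense of Definition~\ref{def:touch}. Suppose, toward a contradiction, that they do not touch. Then Lemma~\ref{lemma:servizio.do not touch} gives
\[
\dist(Q_i, Q_j) \,\geq\, \frac{\min\{q_i,q_j\}}{\sqrt{n}} \,=\, \frac{q_j}{\sqrt{n}},
\]
using the hypothesis $q_j \leq q_i$. On the other hand, by definition of distance between sets, $\dist(Q_i,Q_j) \leq |x_i - x_j| \leq \e q_i = \frac{q_i}{8\sqrt{n}}$, so that we obtain the upper bound $q_j \leq q_i/8$.

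The second step is to derive a matching lower bound for $q_j$ in terms of $q_i$ from the Whitney distance estimate \eqref{dist.QF.q}, thereby reaching the contradiction. For any $z \in F$ attaining $\dist(Q_j, F) = \dist(y, z)$ with $y \in Q_j$, the triangle inequality together with $\diam(Q_j) = q_j$ and \eqref{dist.QF.q} gives $\dist(x_j, F) \leq |x_j - y| + |y - z| \leq q_j + 4 q_j = 5 q_j$. Combined with the bound $\dist(x_i,F) \geq \dist(Q_i,F) \geq q_i$ (from \eqref{dist.QF.q} and $x_i\in Q_i$) and $|x_i - x_j| \leq \e q_i$, we get
\[
q_i \,\leq\, \dist(x_i,F) \,\leq\, |x_i-x_j| + \dist(x_j,F) \,\leq\, \e q_i + 5 q_j,
\]
which rearranges to $q_j \geq (1-\e)\, q_i / 5$. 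Since $\e = \frac{1}{8\sqrt n} \leq \frac18$ for every $n \geq 1$, this yields $q_j \geq \frac{7}{40}\, q_i > \frac{1}{8}\, q_i$, contradicting the upper bound $q_j \leq q_i/8$ found above.

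The two ingredients fit together almost automatically, so I do not expect real obstacles; the only point that needs a little care is the numerical balance between the factors $\frac{1}{\sqrt n}$ and $\e = \frac{1}{8\sqrt n}$, which is precisely engineered in Definition~\ref{def:Q*} so that the ``Whitney slack'' $(1-\e)/5$ beats the $1/8$ coming from Lemma~\ref{lemma:servizio.do not touch}. This is what justifies, a posteriori, the specific choice of the expansion factor $\lm = 1 + \frac{1}{8\sqrt{n}}$.
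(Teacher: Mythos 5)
Your proof is correct and follows essentially the same route as the paper: assume the cubes do not touch, combine Lemma \ref{lemma:servizio.do not touch} with $|x_i-x_j|\leq\e q_i$ to get $q_j\leq q_i/8$, and then use the Whitney bound \eqref{dist.QF.q} together with the Lipschitz property of $\dist(\cdot,F)$ to get a contradicting lower bound on $q_j$. The only (harmless) difference is that you re-derive $\dist(x_j,F)\leq 5q_j$ directly for $x_j\in Q_j$ instead of quoting the slightly weaker bound $\dist(x_j,F)\leq 6q_j$ from \eqref{dist.xF.q}; both numerical balances close the argument.
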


\begin{proof}
Suppose that $Q_i, Q_j$ do not touch. 
Then, by Lemma \ref{lemma:servizio.do not touch}, 
$\dist(Q_i,Q_j) \geq \frac{1}{\sqrt{n}} q_j$. 
Moreover $\dist(Q_i,Q_j) \leq |x_i - x_j|$ because $x_i \in Q_i$, $x_j \in Q_j$. 
Hence 
\begin{equation} \label{notte.3}
\frac{1}{\sqrt{n}}\, q_j 
\leq \dist(Q_i,Q_j) 
\leq |x_i - x_j|
\leq \e q_i.
\end{equation}
For all $z \in F$ one has 
$|x_j-z| \geq |x_i-z| - |x_i - x_j|  
\geq |x_i - z| - \e q_i$, 
and, taking the infimum over all $z \in F$, 
\[
\dist(x_j,F) \geq \dist(x_i,F) - \e q_i.
\]
Now $\dist(x_i,F) \geq \dist(Q_i,F)$ because $x_i \in Q_i$, 
and $\dist(Q_i,F) \geq q_i$ by \eqref{dist.QF.q}. 
Also, $6 q_j \geq \dist(x_j,F)$ by \eqref{dist.xF.q}, 
because $x_j \in Q_j \subset Q_j^*$.
Thus 
\begin{equation} \label{notte.4}
6 q_j \geq (1 - \e) q_i.
\end{equation}
Since $q_i > 0$, \eqref{notte.3}, \eqref{notte.4} imply that 
$1-\e \leq 6 \e \sqrt{n}$,  
namely $1+6\sqrt{n} \geq \frac{1}{\e}$. 
By Definition \ref{def:Q*}, 
$\frac{1}{\e} = 8 \sqrt{n}$, which gives a contradiction.
\end{proof}

\begin{lemma} \label{lemma:servizio.**}
If $Q_i^*$ intersects $Q_j^*$, then $Q_i$ touches $Q_j$. 
\end{lemma}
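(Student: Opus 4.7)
The plan is to reduce the claim to a direct application of the two preparatory lemmas already proved in the appendix, namely Lemma \ref{lemma:servizio.6} (each point of an expanded cube $Q^*$ lies within $(\varepsilon/2) \diam(Q)$ of a point of $Q$) and Lemma \ref{lemma:servizio.5} (if two cubes of $\mF$ have representative points within $\varepsilon q_i$ of each other, with $q_i$ the larger diameter, then they touch). The case $Q_i = Q_j$ is trivial, so we may assume the two cubes are distinct (hence disjoint, since the family $\mF$ consists of mutually disjoint cubes).

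First, I would pick an arbitrary point $x \in Q_i^* \cap Q_j^*$, whose existence is guaranteed by the hypothesis that $Q_i^*$ and $Q_j^*$ intersect. Next, I would apply Lemma \ref{lemma:servizio.6} to each expanded cube separately: this produces a point $x_i \in Q_i$ with $|x - x_i| \leq (\varepsilon/2) q_i$ and a point $x_j \in Q_j$ with $|x - x_j| \leq (\varepsilon/2) q_j$.

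Then, after relabelling if necessary so that $q_j \leq q_i$ (the conclusion ``$Q_i$ touches $Q_j$'' being symmetric in $i,j$), the triangle inequality gives
\[
|x_i - x_j| \leq |x_i - x| + |x - x_j| \leq \tfrac{\varepsilon}{2}\, q_i + \tfrac{\varepsilon}{2}\, q_j \leq \varepsilon\, q_i.
\]
At this point Lemma \ref{lemma:servizio.5} applies with precisely the hypothesis $|x_i - x_j| \leq \varepsilon q_i$, and its conclusion is exactly that $Q_i$ and $Q_j$ touch, as required.

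There is no real obstacle here: the whole argument is just an assembly of Lemmas \ref{lemma:servizio.6} and \ref{lemma:servizio.5}, and the numerical value $\varepsilon = \frac{1}{8\sqrt{n}}$ of Definition \ref{def:Q*} was chosen precisely so that this assembly works. The only small care needed is to use the bound $q_j \leq q_i$ to absorb the $(\varepsilon/2) q_j$ contribution into $\varepsilon q_i$ so that Lemma \ref{lemma:servizio.5} can be invoked in the required form.
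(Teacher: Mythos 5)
Your proposal is correct and follows exactly the paper's own argument: pick a common point of $Q_i^*\cap Q_j^*$, use Lemma \ref{lemma:servizio.6} on each expanded cube to get nearby points $x_i\in Q_i$, $x_j\in Q_j$, bound $|x_i-x_j|\le\varepsilon q_i$ by the triangle inequality (assuming $q_j\le q_i$), and conclude with Lemma \ref{lemma:servizio.5}. No further comment is needed.
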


\begin{proof}
Let $Q_i^* \cap Q_j^*$ be nonempty. 
Then there exists $y \in Q_i^* \cap Q_j^*$. 
Since $y \in Q_i^*$, by Lemma \ref{lemma:servizio.6} 
there exists $x_i \in Q_i$ such that 
$|y-x_i| \leq (\e/2) q_i$. 
Similarly, since $y \in Q_j^*$, there exists 
$x_j \in Q_j$ such that 
$|y-x_j| \leq (\e/2) q_j$. 
Suppose $q_j \leq q_i$. Hence
\[
|x_i - x_j| \leq |x_i-y| + |y-x_j| 
\leq (\e/2) (q_i + q_j)
\leq \e q_i.
\]
Thus $x_i \in Q_i$, $x_j \in Q_j$, and $|x_i - x_j| \leq \e q_i$. 
Then, by Lemma \ref{lemma:servizio.5}, $Q_i$ and $Q_j$ touch.
\end{proof}

\begin{lemma} \label{lemma:good}
For each $x \in \Om$ there exists a neighborhood $B_x$ of $x$, 
contained in $\Om$, 
that intersects at most $12^n$ expanded cubes $Q_i^*$, 
with $Q_i \in \mF$. 
\end{lemma}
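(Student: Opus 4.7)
The plan is to locate $x$ inside a specific cube $Q_{i_0}$ of the Whitney decomposition, take an open ball $B_x$ of radius proportional to $q_{i_0}$ around $x$, and then argue via Lemma \ref{lemma:servizio.5} that every $Q_j^*$ meeting $B_x$ forces $Q_j$ to either coincide with $Q_{i_0}$ or touch it. Proposition \ref{prop:Prop.2.Stein.169} then caps the number of such cubes at $12^n$.

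First, since $\Om = \bigcup_i Q_i$ by Theorem \ref{thm:Stein.167}, I pick any index $i_0$ with $x \in Q_{i_0}$ (if $x$ lies on a common boundary of several cubes, any choice is fine), and set
\[
r_x := \frac{\e}{2}\, q_{i_0}, \qquad B_x := \{ y \in \R^n : |y - x| < r_x \},
\]
with $\e = 1/(8\sqrt n)$ as in Definition \ref{def:Q*}. The inclusion $B_x \subseteq \Om$ follows at once from $\dist(Q_{i_0},F) \geq q_{i_0}$ (property \eqref{dist.QF.q}) together with $x \in Q_{i_0}$ and $r_x < q_{i_0}$.

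Next, given any $j$ for which $B_x \cap Q_j^* \neq \emptyset$, I would fix a point $y$ in this intersection and apply Lemma \ref{lemma:servizio.6} to produce $x_j \in Q_j$ satisfying $|y - x_j| \leq (\e/2)\, q_j$. The triangle inequality then yields $|x - x_j| \leq r_x + (\e/2)\, q_j = (\e/2)(q_{i_0} + q_j)$. Splitting into the two cases $q_j \leq q_{i_0}$ and $q_j > q_{i_0}$ gives respectively $|x - x_j| \leq \e\, q_{i_0}$ and $|x - x_j| \leq \e\, q_j$, and in either case Lemma \ref{lemma:servizio.5}, applied with the larger cube in the role of $Q_i$, forces $Q_j$ either to equal $Q_{i_0}$ or to touch it. Proposition \ref{prop:Prop.2.Stein.169} then completes the count.

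The main obstacle is essentially just the symmetric bookkeeping of the two size-orderings when invoking Lemma \ref{lemma:servizio.5}, together with the need to treat the degenerate case $Q_j = Q_{i_0}$ (for which the notion of ``touching'' in Definition \ref{def:touch} does not formally apply) as an extra possibility alongside the conclusion of the lemma. Once the radius $r_x = (\e/2)\, q_{i_0}$ is identified as the correct choice, every other step reduces to a direct triangle inequality.
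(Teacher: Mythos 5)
Your proof is correct and follows essentially the same route as the paper's: the paper picks the radius small enough that $B_x \subset Q_{i_0}^*$ and then invokes Lemma \ref{lemma:servizio.**}, whereas you take a slightly larger radius and inline the same triangle-inequality argument (Lemma \ref{lemma:servizio.6} plus a case split on which cube is larger before applying Lemma \ref{lemma:servizio.5}), which works equally well. The degenerate case $Q_j=Q_{i_0}$ that you flag is implicitly present in the paper's argument as well (strictly both counts give $12^n+1$ rather than $12^n$) and is harmlessly absorbed by the generous constant.
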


\begin{proof}
Let $x \in \Om$. Then $x \in Q_j$ for some $Q_j \in \mF$, 
because, by Theorem \ref{thm:Stein.167},  $\Om = \cup_i Q_i$. 
Consider the open ball $B_x := B(x,\d)$ of center $x$ 
and radius $\d$ sufficiently small to have $B_x \subset Q_j^*$:
since $x \in Q_j$, a radius equal to $\e$ times the half-length of the edge of $Q_j$
is enough, so we choose $\d = \e q_j \frac{1}{2 \sqrt{n}}$. 
If $B_x$ intersects some $Q_i^*$, then $Q_j^*$ intersects $Q_i^*$ 
(because $B_x \subset Q_j^*$). 
Hence, by Lemma \ref{lemma:servizio.**}, $Q_i$ touches $Q_j$. 
The number of cubes of $\mF$ that touch $Q_j$ is at most $12^n$
by Proposition \ref{prop:Prop.2.Stein.169}.
\end{proof}

By Lemma \ref{lemma:good}, the proof of Proposition \ref{prop:cubes} 
is now complete.

\begin{proof}[Proof of Proposition \ref{prop:partition of unity}] 
The partition of unity satisfying \eqref{POU} 
is constructed on page 170 of \cite{S}. 
The identity in \eqref{POU.der} is obtained 
by differentiating the identity $\sum \ph_i^*(x) = 1$,  
taking into account that the series is locally finite: 
by Lemma \ref{lemma:good}, around every point $x$ there is an open set $B_x$ 
such that $x \in B_x \subset \Om$ and only $N$ expanded cubes $Q_i^*$, 
with $N \leq 12^n$, intersect $B_x$; 
therefore there are only $N$ functions $\ph_i^*$ that are not identically zero on $B_x$. 
Hence there is no convergence problem in differentiating the series.  
The inequality in \eqref{POU.der} is equation (13) on page 174 of \cite{S}.
\end{proof}

\begin{proof}[Proof of Lemma \ref{lemma:cubi vicini}]
Suppose that $x \in Q_i^*$ for some $i \notin \mN$. 
By \eqref{dist.xF.q}, $q_i \leq 2 \dist(x,F)$, 
and, by assumption, $2 \dist(x,F) \leq 1$. 
Hence $q_i \leq 1$, namely $i \in \mN$, a contradiction. 
This proves that $x \notin Q_i^*$ for all $i \notin \mN$.
Therefore, by \eqref{POU}, 
$\ph_i^*(x) = 0$ for all $i \notin \mN$. 
Thus only the terms with $i \in \mN$ remain in the sum. 

To make the partial derivative of the sum, 
consider that $\{ x \in \Om : \dist(x,F) < 1/2 \}$ 
is an open set, and that the sum is locally finite 
(Proposition \ref{prop:cubes}).
\end{proof}

The proof of the results stated in subsection \ref{subsec:cubes} 
is complete.

\section{Examples of scales of Banach spaces} 
\label{sec:list}

In this section we give a non exhaustive list 
of well-known examples of scales of Banach spaces. 

In subsection \ref{subsec:Sobolev} we consider $L^2$-based Sobolev spaces $H^s$, 
on $\R^d$ and on $\T^d$, where the scale is parametrized by the continuous (real) parameter $s$; 
properties \eqref{0303.1}, \ldots, \eqref{S2} are immediate.

In subsection \ref{subsec:Ck} we deal with classes $C^k_b$ 
of continuously differentiable functions with bounded derivatives,  
on $\R^d$ and on $\T^d$, where the scale is described by the discrete (integer) parameter $k$; 
we give self-contained proofs of \eqref{0303.1}, \ldots, \eqref{S2} 
(this is classical material). 

H\"older spaces are treated more briefly in subsection \ref{subsec:Holder}, 
following H\"ormander \cite{Geodesy} and Zehnder \cite{Zehnder}. 
Few other interesting examples are mentioned in subsection \ref{subsec:other.scales}. 

In subsection \ref{subsec:Lebesgue} 
we consider Lebesgue spaces of sequences 
$\ell^p$ and of functions $L^p(\Om)$, where $\Om$ has finite measure. 
They are scales of Banach spaces 
satisfying \eqref{0303.1} and also \eqref{interpolazione}, 
for which the families $(S_\theta)$ of linear operators 
that could seem to be natural candidates to be smoothing operators
satisfy \eqref{S.generic}, \eqref{S1}, but not \eqref{S2}. 
These observations, although elementary, seem to be hard to find in literature,
and they are maybe new. 

In fact, for $\ell^p$ spaces we prove more: in Theorem \ref{thm:ell.p}
we prove that there does not exist any family of linear operators 
satisfying all the properties 
\eqref{S.generic}, \eqref{S1}, \eqref{S2}.
To the best of our knowledge, Theorem \ref{thm:ell.p} is new.


\subsection{Sobolev spaces $H^s$}
\label{subsec:Sobolev}

\begin{ex} (\emph{$H^s(\R^d,\C)$ with Fourier truncation $\la \xi \ra \leq \theta$}).
\label{ex:1}
On $\R^d$, with $s \in \R$, consider the Sobolev space
\begin{align}
H^s(\R^d,\C) & := \big\{ u : \R^d \to \C : \| u \|_{H^s(\R^d,\C)} < \infty \big\}, 
\notag \\ 
\| u \|_{H^s(\R^d,\C)} & := 
\Big( \int_{\R^d} |\hat u(\xi)|^2 \la \xi \ra^{2s} \, d\xi \Big)^{\frac12},
\quad \ \la \xi \ra := (1 + |\xi|^2)^{\frac12},
\label{def Jap bra}
\end{align}
where $\hat u$ is the Fourier transform of $u$. 
For any real number $a_0 \in \R$, 
let 
\[
\mI := [a_0, \infty), \quad \ 
E_a := H^a(\R^d,\C), \quad \ 
\| u \|_a := \| u \|_{H^a(\R^d,\C)}.
\] 
Then the family $(E_a, \| \ \|_a)_{a \in \mI}$ satisfies \eqref{0303.1}.
For every real $\theta \geq 1$, let 
\[
(S_\theta u)(x) := \frac{1}{(2\pi)^d} 
\int_{ \la \xi \ra \leq \theta} \hat u(\xi) e^{i\xi \cdot x} \, d\xi.
\]
In other words, $S_\theta$ is the Fourier multiplier 
$\hat u(\xi) \mapsto \chi_{\theta}(\xi) \hat u(\xi)$
where $\chi_\theta(\xi) = 1$ if $\la \xi \ra \leq \theta$ 
and zero otherwise.  
It is immediate to check that \eqref{S1}-\eqref{S2} 
are satisfied with $A_{ab} = B_{ab} = 1$ for all $a,b \in \mI$, $a \leq b$.
\end{ex}

\begin{ex} (\emph{$H^s(\R^d,\C)$ with Fourier truncation $|\xi| \leq \theta$}).
\label{ex:2}
With $E_a, \| \ \|_a$ like in Example \ref{ex:1}, define $S_\theta$ as the Fourier 
multiplier $\hat u(\xi) \mapsto \chi_{\theta}(\xi) \hat u(\xi)$
where $\chi_\theta(\xi) = 1$ if $|\xi| \leq \theta$ 
and zero otherwise. 
Then $\| S_\theta u \|_b^2 \leq (1 + \theta^2)^{b-a} \| u \|_a^2 
\leq (2 \theta^2)^{b-a}$ for $a \leq b$, and \eqref{S1} holds 
with $A_{ab} = 2^{\frac{b-a}{2}}$; \eqref{S2} holds with $B_{ab} = 1$.
\end{ex}

\begin{ex} (\emph{$H^s(\T^d,\C)$ with Fourier truncation $\la k \ra \leq \theta$}).
\label{ex:3}
For $s \in \R$, consider the Sobolev space of periodic functions
\begin{align*}
H^s(\T^d,\C) & := \big\{ u : \T^d \to \C : \| u \|_{H^s(\T^d,\C)} < \infty \big\}, 
\quad \ \T := \R / 2 \pi \Z, 
\\ 
\| u \|_{H^s(\T^d,\C)} & := 
\Big( \sum_{k \in \Z^d} |\hat u_k|^2 \la k \ra^{2s} \Big)^{\frac12}
\end{align*}
where $\hat u_k$ are the Fourier coefficients of $u$, 
and $\la \  \ra$ 
is defined in \eqref{def Jap bra}.
For any $a_0 \in \R$, let 
$\mI := [a_0, \infty)$, 
$E_a := H^a(\T^d,\C)$, 
$\| u \|_a := \| u \|_{H^a(\T^d,\C)}$.
Then the family $(E_a, \| \ \|_a)_{a \in \mI}$ satisfies \eqref{0303.1}.
For every real $\theta \geq 1$, let 
\[
(S_\theta u)(x) := 
\sum_{\begin{subarray}{c} k \in \Z^d \\ \la k \ra \leq \theta \end{subarray}} 
\hat u_k e^{i k \cdot x}.
\]
Then \eqref{S1}-\eqref{S2} hold with $A_{ab} = B_{ab} = 1$ for all $a,b \in \mI$, $a \leq b$.
The fact that $\theta$ is a ``continuous parameter'' 
(namely $\theta$ varies in the interval $\mI$) 
and $\la k \ra$ is a ``discrete'' one (because $k$ varies in $\Z^d$) 
is not a problem in checking the validity of \eqref{S1}-\eqref{S2}.
\end{ex}

\begin{ex} (\emph{$H^s(\T^d,\C)$ with Fourier truncation $|k| \leq \theta$}).
\label{ex:4}
With $E_a, \| \ \|_a$ like in Example \ref{ex:3}, 
define $S_\theta$ as the Fourier truncation $|k| \leq \theta$. 
Then $\| S_\theta u \|_b^2 \leq (1 + \theta^2)^{b-a} \| u \|_a^2 
\leq (2 \theta^2)^{b-a}$ for $a \leq b$, and \eqref{S1} holds 
with $A_{ab} = 2^{\frac{b-a}{2}}$; \eqref{S2} holds with $B_{ab} = 1$.
\end{ex}

\subsection{Spaces $C^k$}
\label{subsec:Ck}

\begin{ex} (\emph{$C^k_b(\R^d,\C)$, $k$ integer}).
\label{ex:5}
For $k \geq 0$ integer, consider the set 
$C^k_b(\R^d,\C)$ of all bounded, $k$ times differentiable functions 
with continuous bounded derivatives, with norm 
\[
\| u \|_{C^k(\R^d,\C)} 
= \max_{\begin{subarray}{c} \a \in \N^d \\ |\a| \leq k \end{subarray}} 
\ \sup_{x \in \R^d} |\pa_x^\a u(x)|.
\]
Let 
\[
a_0 = 0, \quad \ 
\mI = \{ 0,1,2,\ldots \} = \N, \quad \ 
E_a = C^a_b(\R^d,\C), \quad \ 
\| u \|_a = \| u \|_{C^a(\R^d,\C)}.
\]
Then the family $(E_a, \| \ \|_a)_{a \in \mI}$ satisfies \eqref{0303.1}.

We consider smoothing operators $S_\theta$ defined as convolution operators 
(or smooth Fourier cut-off) in the following, classical way. 
Fix a real, even function $\sigma \in C^\infty(\R^d,\R)$, 
vanishing for $|\xi| \geq 1$, 
such that $\s =1$ in the ball $|\xi| \leq 1/2$.  
Define $\psi$ as the Fourier anti-transform of $\sigma$, namely 
\[
\psi(x) := \frac{1}{(2\pi)^d} \int_{\R^d} \sigma(\xi) e^{i\xi \cdot x} \, d\xi, 
\quad \ 
\hat \psi(\xi) := \int_{\R^d} \psi(y) e^{-i \xi \cdot y} \, dy 
= \sigma(\xi), 
\quad \ x,\xi \in \R^d.
\]
Thus $\psi$ is real, even, and belongs to the Schwartz class $\mS(\R^d,\R)$ 
(because $\s \in \mS(\R^d,\R))$. 
For every real $\theta \geq 1$ we define 
\begin{equation} \label{def.JMZ}
\psi_\theta(x) := \theta^d \psi(\theta x), \quad \ 
(S_\theta u)(x) := (u \ast \psi_\theta)(x) 
= \int_{\R^d} u(y) \psi_\theta(x-y) \, dy.
\end{equation}
With standard calculations one has 
\begin{equation} \label{rescaled.F.transform}
\widehat{\psi_\theta} (\xi) = \hat \psi(\theta^{-1} \xi) = \s (\theta^{-1} \xi), 
\quad \ 
\widehat{(S_\theta u)}(\xi) = \hat u(\xi) \widehat{\psi_\theta}(\xi)
= \hat u(\xi) \s (\theta^{-1} \xi),
\end{equation}
so that the smoothing operator $S_\theta$ 
is the Fourier multiplier of symbol $\s(\theta^{-1} \xi)$; 
since $\s(\theta^{-1} \xi) = 1$ for $|\xi| \leq \theta / 2$
and $\s(\theta^{-1} \xi) = 0$ for $|\xi| \geq \theta$, 
$S_\theta$ is in fact 
a smooth version of the crude Fourier truncation of Example \ref{ex:2}. 
We also have
\begin{align} 
\label{int.psi}
\int_{\R^d} \psi_\theta(x) \, dx 
& = 1, 
\qquad  
\int_{\R^d} x^\a \psi_\theta(x) \, dx = 0 
\quad \forall \a \in \N^d, \ \a \neq 0,
\\
\int_{\R^d} |\pa_x^\a \psi_\theta(x)| \, dx 
& = \theta^{|\a|} \| \pa_x^\a \psi \|_{L^1(\R^d)} 
\quad \ \forall \a \in \N^d,
\label{der.psi} \\ 
\int_{\R^d} |x|^p |\psi_\theta(x)| \, dx 
& = \theta^{-p} C_p, \quad \ C_p := \int_{\R^d} |x|^p |\psi(x)| \, dx, 
\quad \ \forall p \in \R, \ p \geq 0.
\label{molt.psi}
\end{align} 
To prove \eqref{int.psi}, first consider $\theta = 1$: 
for every $\a \in \N^d$ 
the function $h(x) := x^\a \psi(x)$ is in $L^1(\R^d)$
(because $\psi \in \mS(\R^d)$), 
and one has $\hat h(\xi) = i^{|\a|} \pa_\xi^\a \hat \psi(\xi)
= i^{|\a|} \pa_\xi^\a \s(\xi)$. 
Therefore $\psi$ satisfies \eqref{int.psi} 
because $\sigma(0) = 1$ and $\pa_\xi^\a \sigma(0) = 0$ for all $\a \neq 0$. 
For $\theta > 1$, \eqref{int.psi} is obtained by the change of variable
$\theta x = y$ in the integral. 
With the same change of variable one also gets 
\eqref{der.psi}, \eqref{molt.psi}.

For all multi-indices $\a,\b \in \N^d$, one has 
$\pa_x^{\a+\b} (u \ast \psi_\theta) = (\pa_x^\a u) \ast (\pa_x^{\b} \psi_\theta)$, 
and therefore, by \eqref{der.psi}, 
\begin{equation} \label{alpha+beta.der}
|\pa_x^{\a+\b} S_\theta u (x)| 
\leq \big( \sup_{y \in \R^d} |\pa_x^\a u(y)| \big) 
\int_{\R^d} |\pa_x^{\b} \psi_\theta(z)| \, dz
\leq \| u \|_{C^{|\a|}(\R^d)} \theta^{|\b|} \| \pa_x^{\b} \psi \|_{L^1(\R^d)}.
\end{equation}
Given $a,b \in \mI$, $a \leq b$, 
for all multi-indices $\ell$ such that $a \leq |\ell| \leq b$ 
we take $\a,\b \in \N^d$ such that $|\a|=a$, $\a+\b = \ell$, 
and use \eqref{alpha+beta.der}; 
for $|\ell| \leq a$ use \eqref{alpha+beta.der} with $\a = \ell$, $\b = 0$. 
Thus we obtain \eqref{S1} with 
$A_{ab} = \max \{ \| \pa_x^\ell \psi \|_{L^1(\R^d)} : |\ell| \leq b-a \}$.

To get \eqref{S2}, we use \eqref{int.psi}, \eqref{molt.psi} and Taylor's expansion. 
Let $a,b \in \mI$, $a \leq b$. 
If $a=b$, then \eqref{S2} follows from \eqref{S1}
by triangular inequality. 
Thus assume that $a < b$.
Let $\a \in \N^d$, $|\a| = m \leq a$. 
We expand $\pa_x^\a u(y)$ around $x$, 
\begin{equation} \label{expand.Ck}
\pa_x^\a u(y) 
= \sum_{|\a + \ell| \leq b-1} \frac{1}{\ell!} \pa_x^{\a+\ell} u(x) (y-x)^\ell + R_\a(y,x),
\end{equation}
where the remainder satisfies 
\begin{equation} \label{est.R.Ck}
|R_\a(y,x)| \leq C_{d,b} \| u \|_{C^b(\R^d)} |x-y|^{b-m}
\end{equation}
for some constant $C_{d,b}$ depending only on $d,b$.
Estimate \eqref{est.R.Ck} can be obtained by the mean value theorem, 
for example imitating the proof of Lemma \ref{lemma:segment.small}.
By \eqref{expand.Ck}, \eqref{int.psi},
\[
\pa_x^\a S_\theta u(x) 
= (\pa_x^\a u) \ast \psi_\theta(x)
= \pa_x^\a u(x) + \int_{\R^d} R_\a(y,x) \psi_\theta(x-y) \, dy.
\]
By \eqref{est.R.Ck}, \eqref{molt.psi}, 
\begin{align*}
\int_{\R^d} |R(y,x)| |\psi_\theta(x-y)| \, dy 
& \leq C_{d,b} \| u \|_{C^b(\R^d)} \int_{\R^d} |x-y|^{b-m} |\psi_\theta(x-y)| \, dy 
\\ & 
\leq C_{d,b} \| u \|_{C^b(\R^d)} \theta^{-(b-m)} C_{b-m}
\quad \ \forall x \in \R^d.
\end{align*}
Moreover $\theta^{-(b-m)} \leq \theta^{-(b-a)}$ 
for all multi-index $\a$ of length $|\a| = m \leq a$,
all $\theta \geq 1$. 
Hence $\| u - S_\theta u \|_{C^a(\R^d)} 
\leq C \| u \|_{C^b(\R^d)} \theta^{-(b-a)}$ for some $C$ depending on $d,b,a$, 
namely \eqref{S2}.
\end{ex}

\begin{ex} (\emph{$C^k(\T^d,\C)$, $k$ integer}).
\label{ex:6}
Let $C^k(\T^d,\C)$ be the set of functions $u \in C^k(\R^d,\C)$ 
that are $2\pi$-periodic in each variable. 
If $u : \R^d \to \C$ is periodic, 
then the function $S_\theta u$ defined in \eqref{def.JMZ} 
is also periodic, because 
\[
(u \ast \psi_\theta)(x + 2 \pi m) 
= \int_{\R^d} u(x+2 \pi m - y) \psi_\theta(y) \, dy 
= \int_{\R^d} u(x - y) \psi_\theta(y) \, dy 
= (u \ast \psi_\theta)(x)
\] 
for all $x \in \R^d$, $m \in \Z^d$.
Hence Example \ref{ex:5} includes the periodic setting. 

Note that $S_\theta$ acts on periodic functions 
as the Fourier coefficients multiplier of symbol $\s(\theta^{-1} k)$, namely 
\begin{equation} \label{1409.1}
u(x) = \sum_{k \in \Z^d} \hat u_k e^{ik \cdot x} 
\ \mapsto \ 
(S_\theta u)(x) = \sum_{k \in \Z^d} \hat u_k \s(\theta^{-1} k) e^{ik \cdot x}, 
\end{equation}
as it can be easily deduced from the definition \eqref{def.JMZ} of $S_\theta u(x)$,
replacing $u(x-y)$ in the integral with its Fourier series, 
and recalling that $\int_{\R^d} \psi_\theta(y) e^{-ik \cdot y} \, dy = \s(\theta^{-1} k)$,
see \eqref{rescaled.F.transform}. 

The same holds for functions having period $L_i$ 
in the variable $x_i$, with possibly different periods $L_i \neq L_j$ 
in different directions.
\end{ex}

\subsection{H\"older spaces $\mH^a$}
\label{subsec:Holder}

\begin{ex} (\emph{H\"older spaces $\mH^a(M)$ on a compact $C^\infty$ manifold
with boundary, from H\oe{}rmander} \cite{Geodesy}).
\label{ex:8}
Let $B \subset \R^d$ be a fixed convex compact set with nonempty interior.  
For $k < a \leq k+1$, where $k$ is an integer $\geq 0$, 
let $u \in C^k(B,\C)$ 
(namely $u = v|_B$ is the restriction to $B$ 
of a function $v \in C^k(\Om,\C)$ where $\Om$ is some open neighborhood of $B$),
let 
\begin{equation} \label{def.Holder}
|u|_{\mH^a(B)} := \sum_{|\a| = k} \ 
\sup_{\begin{subarray}{c} x,y \in B \\ x \neq y \end{subarray}} 
\frac{|\pa_x^\a u(x) - \pa_x^\a u(y)|}{|x-y|^a}, 
\quad \ 
\| u \|_{\mH^a(B)} := |u|_{\mH^a(B)} + \sup_{x \in B} |u(x)|,
\end{equation}
and define (Definition A.3 in Appendix A of \cite{Geodesy}) 
the H\oe{}lder space $\mH^a(B)$ 
as the set of all $u \in C^k(B)$ 
with finite norm $\| u \|_{\mH^a(B)}$. 
For $a=0$ we set $\mH^0(B) := C(B)$ and $\| u \|_{\mH^0(B)} := \sup |u|$.

Let $\mI := [0,\infty)$, $E_a := \mH^a(B)$, $\| \ \|_a := \| \ \|_{\mH^a(B)}$. 
Then (Theorem A.5 in \cite{Geodesy}) 
$(E_a, \| \ \|_a)_{a \in \mI}$ satisfies \eqref{0303.1}. 

Moreover (page 42 of \cite{Geodesy}) $\mH^a(M)$ 
can be defined if $M$ is any compact $C^\infty$ manifold with boundary.
To do so, one covers $M$ by coordinate patches $M_j$ 
and takes a partition of unity $\sum \chi_j = 1$ with $\chi_j \in C^\infty_0(M_j)$. 
A function $u$ on $M$ is then said to be in $\mH^a(M)$ 
if $\chi_j u$ for every $j$ is in $\mH^a$ as a function of the local coordinates, 
and $\| u \|_{\mH^a(M)}$ is defined as $\sum \| \chi_j u \|_{\mH^a}$ 
with the terms defined by means of local coordinates. 
The definition of $\mH^a(M)$ does not depend on the choice of covering, local coordinates
or partition of unity, and the norm is well defined up to equivalences. 

Now let $K$ be a compact set in $\R^d$ 
and choose $\chi \in C^\infty_0(\R^d)$ 
such that $\chi = 1$ in a neighborhood of $K$. 
Define $\s, \psi, \psi_\theta$ like in Example \ref{ex:5}, and set 
$(S_\theta u)(x) := \chi(x) (u \ast \psi_\theta)(x)$
for functions $u$ with support in $K$. 
Then (see Theorem A.10 of \cite{Geodesy}) 
the smoothing inequalities \eqref{S1}-\eqref{S2} hold 
for $u$ supported in $K$.  
Moreover (Remark on page 44 of \cite{Geodesy}) for the spaces $\mH^a(M)$, 
with $M$ compact manifold, one decomposes $u$ by the partition of unity 
and defines $S_\theta u (x) := \chi_j(x) (u \ast \psi_\theta)(x)$ in each coordinate patch. 
\end{ex}

\begin{ex}(\emph{H\"older spaces $\mH^a(\T^d)$, from Zehnder} \cite{Zehnder}).
\label{ex:9}
Let $\mI = [0,\infty)$. 
For $a \geq 0$ integer, define $(E_a, \| \ \|_a)$ as the 
space $C^a(\T^d,\C)$ of $a$ times continuously differentiable functions 
of $\R^d$ that are periodic in each argument, with its usual norm defined 
in Examples \ref{ex:5}-\ref{ex:6}.
For $a \notin \N$, define $(E_a, \| \ \|_a)$ 
as in Example \ref{ex:8}, namely by \eqref{def.Holder} with $\R^d$ in place of $B$. 
Let $S_\theta$ be the convolution operator of Example \ref{ex:6}
(which is the one of Example \ref{ex:5}, applied to periodic functions).
Then (Lemma 6.2.4 of \cite{Zehnder}) \eqref{S1}-\eqref{S2} hold.  
\end{ex}

On $\T^d$, the difference between Examples \ref{ex:8} and \ref{ex:9}
is in the definition of $E_a$ for $a \geq 1$ integer: 
in Example \ref{ex:9} the derivatives of order $a-1$ 
of the functions in $E_a$ are of class $C^1$, 
while in Example \ref{ex:8} they are just Lipschitz
(like in Definition \ref{def Lip Stein} when $\rho=k+1$). 
Except for $a=1,2,\ldots$, the Banach spaces in Examples \ref{ex:8}
and \ref{ex:9} coincide; removing the positive integer values of $a$ 
one obtains another scale.

\begin{ex}(\emph{H\"older spaces $\mH^a(\T^d)$ with noninteger exponent}).
\label{ex:20}
Let $\mI = [0,1) \cup (1,2) \cup \ldots$,
namely $[0,\infty)$ without the positive integers.
Then the definitions of $E_a$ in Examples \ref{ex:8} and \ref{ex:9} agree,  
and this gives another scale with smoothing. 
\end{ex}

\subsection{Other scales}
\label{subsec:other.scales}

In the next three examples 
we briefly sketch few other cases of scales with smoothings. 

\begin{ex} 
\label{ex:13} 
(\emph{Sobolev, Besov, Triebel-Lizorkin spaces}).
With tools from Fourier analysis like 
Littlewood-Paley decomposition, 
convolution estimates, 
and Bernstein inequalities, 
one could also deal with $L^p$-based Sobolev spaces $W^{s,p}(\R^d,\C)$,
Besov spaces $B^s_{p,q}(\R^d,\C)$, 
and Triebel-Lizorkin spaces $F^s_{p,q}(\R^d,\C)$, 
where the ``amount of derivatives'' $s$ 
is the parameter to move to obtain the scale $(E_a)$, 
keeping the summability powers $p,q$ fixed. 
If $u = \sum \Delta_j u$ is a Littlewood-Paley decomposition 
of a function $u$, then one defines
$S_\theta u$ as the partial sum over all $j$ such that $2^j \leq \theta$. 
As observed in Example \ref{ex:3}, the fact that 
$j$ is a discrete parameter ($j$ is an integer) 
and $\theta$ is a continuous one ($\theta \geq 1$ is real)
should have no technical consequences.

We remark that using one of the summability parameters $p,q$ 
of Besov and Triebel-Lizorkin spaces to parametrize the scale 
is very likely to produce a scale of Banach spaces 
that does not admit any family of smoothing operators: 
see subsection \ref{subsec:Lebesgue}, 
and especially Theorem \ref{thm:ell.p}.
\end{ex}

\begin{ex} 
\label{ex:14}
(\emph{Functions with polynomial decay}).
Let $\mI = [0,\infty)$, let $Y$ be a Banach space. 
For $a \in \mI$, for any function $u : \R^d \to Y$, let 
\[
\| u \|_a := \sup_{x \in \R^d} (1+|x|)^a \| u(x) \|_Y,
\]
and let $E_a$ be the space of functions with finite norm. 
For $\theta \geq 1$, let 
\[
(S_\theta u)(x) := u(x) \quad \text{if $1+|x| \leq \theta$}; 
\qquad 
(S_\theta u)(x) := 0 \quad \text{if $1+|x| > \theta$}.
\]
One immediately verifies that \eqref{0303.1}, \ldots, 
\eqref{S2} hold, with $A_{ab}=B_{ab}=1$.  

This example is essentially the same as Example \ref{ex:1}
but with decay in ``space'', or ``time'', or another ``physical variable'' $x$ 
instead of Fourier frequency.

Possible variants of this example can include partial derivatives like 
$\sup (1+|x|)^a \| \pa_x^m u(x) \|_Y$.
\end{ex}

\begin{ex} 
\label{ex:14.1}
(\emph{Functions with values in a scale}).
Let $a_0 \in \R$, $\mI \subseteq [a_0, \infty)$, with $a_0 = \min \mI$. 
Let $(F_a, \| \ \|_{F_a})$, $a \in \mI$, be a scale of Banach spaces
with smoothing operators $(S_\theta)$, $\theta \geq 1$, satisfying 
\eqref{0303.1}, \ldots, \eqref{S2}.  
Let $T>0$, let $E_a := C([0,T], F_a)$ be the set of continuous functions 
$u : [0,T] \to F_a$, with norm
\[
\| u \|_a := \sup_{t \in [0,T]} \| u(t) \|_{F_a}.
\]
For $u \in E_{a_0}$, define $S_\theta u$ by 
$(S_\theta u)(t) := S_\theta [ u(t) ]$ for all $t \in [0,T]$. 
It is immediate to verify that \eqref{0303.1}, \ldots, \eqref{S2} hold 
for the scale $(E_a)$ (without changing the constants $A_{ab}, B_{ab}$).  

Possible variants of this example can be obtained 
by replacing $[0,T]$ with other domains $\Om \subseteq \R^d$, 
or by replacing $C([0,T],F_a)$ with $L^\infty([0,T], F_a)$, 
or by including one derivative, 
like $\sup_t \| u(t) \|_{F_a} + \sup_t \| \pa_t u(t) \|_{F_a}$, 
or more derivatives,
or by including derivatives with decreasing regularity norms, 
like $\sup_t \| u(t) \|_{F_a} + \sup_t \| \pa_t u(t) \|_{F_{a-\d}}$, 
etc.

This example and its many possible variants are based on the same 
basic observations about the ``inherited structure'' 
in Proposition \ref{prop:about.Lip}.
\end{ex}

\subsection{Lebesgue spaces}
\label{subsec:Lebesgue}

In this subsection we consider Lebesgue spaces of sequences and of functions.
They satisfy \eqref{0303.1} and \eqref{interpolazione},
and admit operators satisfying \eqref{S.generic}, \eqref{S1}, but not \eqref{S2}.
For the $\ell^p$ spaces we also prove (Theorem \ref{thm:ell.p}) 
the non-existence of families of smoothing operators 
satisfying all \eqref{S.generic}, \eqref{S1}, \eqref{S2}.
The observations about $(S_\theta)$ 
in Examples \ref{ex:15}, \ref{ex:16} 
and Theorem \ref{thm:ell.p}
seem to be new.

\begin{ex} 
\label{ex:15}
(\emph{Lebesgue space of sequences $\ell^p$}). 
Let $\N_1 := \{ 1,2,\ldots \}$ be the set of positive integers.
For every $p \in [1,\infty]$, let $\ell^p = \ell^p(\N_1,\C)$ 
be the set of sequences $x = (x_1, x_2, \ldots)$
of complex numbers with finite norm $\| x \|_{\ell^p}$, 
where 
\[
\| x \|_{\ell^p} := \Big( \sum_{k=1}^\infty |x_k|^p \Big)^{\frac{1}{p}} 
\quad \ \text{for } p \in [1,\infty); 
\qquad 
\| x \|_{\ell^\infty} := \sup_{k \in \N_1} |x_k|.
\]
Let $\mI := [0,1]$, $a_0 := 0$.
For every $a \in \mI$, let 
\[
E_a := \ell^p, \quad \ 
\| x \|_a := \| x \|_{\ell^p}, \quad \ 
p := \frac{1}{a}\,, 
\]
with the convention $\frac10 := \infty$.
Then $(E_a)_{a \in \mI}$ is a scale of Banach spaces satisfying \eqref{0303.1}.
To prove it, let $a,b \in \mI$, with $a \leq b$, 
and let $p = \frac{1}{a}$, $q = \frac{1}{b}$.
Let $x \in E_b = \ell^q$ with $\| x \|_{\ell^q} = 1$.  
Then $|x_k| \leq 1$ for all $k$, 
therefore $|x_k|^p \leq |x_k|^q$ for all $k$
(because $|x_k| \leq 1$ and $p \geq q$).
Hence $\sum |x_k|^p \leq \sum |x_k|^q = \| x \|_{\ell^q}^q = 1$, 
and $\| x \|_{\ell^p} \leq 1$.   
Now consider any $x \in \ell^q$, $x \neq 0$, 
define $y := x / \| x \|_{\ell^q}$, 
and apply to $y$ the inequality already proved 
for vectors of unitary $\ell^q$ norm. 

From H\oe{}lder's inequality one obtains directly 
the interpolation property \eqref{interpolazione}
(without using \eqref{S1}-\eqref{S2}): 
let $a,b,c \in \mI$, with $a \leq b \leq c$, and let 
$p_a := \frac{1}{a}$, 
$p_b := \frac{1}{b}$, 
$p_c := \frac{1}{c}$. 
Let $b = \th a + (1-\th) c$ for some $\th \in [0,1]$. 
Then 
\begin{equation} \label{interpolaz.ppp}
\frac{1}{p_b} = \frac{\th}{p_a} + \frac{1-\th}{p_c}.
\end{equation}
Multiplying by $p_b$, 
\begin{equation} \label{interpolaz.lm.mu}
1 = \frac{\th p_b}{p_a} + \frac{(1-\th)p_b}{p_c}
= \frac{1}{\lm} + \frac{1}{\mu},
\qquad \ 
\lm := \frac{p_a}{\th p_b}, \quad \ 
\mu := \frac{p_c}{(1-\th)p_b}.
\end{equation}
Write $|x_k|^{p_b}$ as the product 
$|x_k|^{\th p_b} |x_k|^{(1-\th) p_b}$; 
by H\oe{}lder's inequality, 
\begin{align*}
\| x \|_b 
& = \Big( \sum_{k=1}^\infty |x_k|^{\th p_b} |x_k|^{(1-\th) p_b} \Big)^{\frac{1}{p_b}} 
\leq \Big( \sum_{k=1}^\infty |x_k|^{\th p_b \lm} \Big)^{\frac{1}{\lm p_b}} 
\Big( \sum_{k=1}^\infty |x_k|^{(1-\th) p_b \mu} \Big)^{\frac{1}{\mu p_b}} 
= \| x \|_a^\th \| x \|_c^{1-\th},
\end{align*}
which is \eqref{interpolazione}.

We prove below (Theorem \ref{thm:ell.p}) 
that $\ell^p$ spaces do not have smoothing operators;  
before proving the general result, 
it is instructive to make an attempt to construct smoothing operators, 
and to see what goes wrong.  
The attempt, inspired to the previous examples, 
is to define $S_\theta$ as a truncation operator, 
namely as the pointwise product 
$(S_\theta x)_k = w_{\theta,k} x_k$ with 
$w_{\theta,k} := 1$ for $k \leq \theta$ 
and $w_{\theta,k} := 0$ for $k > \theta$. 
These operators map $E_0 = \ell^\infty$ into $E_1 = \ell^1$, 
so that \eqref{S.generic} is satisfied.
Also \eqref{S1} holds, with constants $A_{ab} = 1$, 
because, by H\oe{}lder's inequality, one has
\[
\| S_\theta x \|_{\ell^q} 
= \Big( \sum |w_{\theta,k}|^{q} |x_k|^{q} \Big)^{\frac{1}{q}}
\leq \Big( \sum |w_{\theta,k}|^{q \lm} \Big)^{\frac{1}{q \lm}} 
 \Big( \sum |x_k|^{q \mu} \Big)^{\frac{1}{q \mu}}
\leq \theta^{b-a} \| x \|_{\ell^p}
\]
where $a,b \in [0,1]$, 
$a \leq b$, 
$p = \frac{1}{a}$, 
$q = \frac{1}{b}$, 
$\mu = \frac{b}{a}$ 
(so that $q\mu=p$)
and $\frac{1}{\lm} + \frac{1}{\mu} = 1$
(so that $\frac{1}{q\lm} = b-a$).

However, \eqref{S2} is violated.  
To show it, assume that \eqref{S2} holds. 
Given $\theta \geq 1$, take an integer $k > \theta$, 
and consider the sequence $x = e_k$ with $x_k = 1$ and $x_j = 0$ for all $j \neq k$. 
Then $S_\theta x = 0$, $\| x \|_{\ell^p} = 1$ for all $p \in [1,\infty]$, 
therefore, by \eqref{S2}, $1 \leq B_{ab} \theta^{-(b-a)}$.  
This holds for all $\theta \geq 1$; for $a<b$, this is a contradiction. 
\end{ex}

Example \ref{ex:15} shows that the truncation operators 
satisfy \eqref{S.generic}, \eqref{S1}, but not \eqref{S2}. 
Now we prove that this is not limited to truncations, but it is a general fact. 

\begin{theorem} 
\label{thm:ell.p}
\emph{(The scale of Lebesgue spaces $\ell^p$ does not admit any family of smoothing operators)}.
Consider the scale defined in Example \ref{ex:15}. 
Then there does not exist any family $(S_\theta)$ of linear operators 
satisfying all \eqref{S.generic}, \eqref{S1}, \eqref{S2}. 

More is true: fix any two real numbers $a,b \in [0,1]$ with $a < b$, and define 
$p := \frac{1}{a}$, $q := \frac{1}{b}$. 
Let $(S_\theta)$, $\theta \geq 1$, 
be a family of linear operators $S_\theta : \ell^p \to \ell^q$
such that 
\begin{equation} \label{S1.thm.ell.p}
\| S_\theta x \|_{\ell^q} \leq A_{ab} \theta^{b-a} \| x \|_{\ell^p} 
\quad \ \forall x \in \ell^p, \ \ \forall \theta\geq 1,
\end{equation}
for some constant $A_{ab} > 0$ independent of $x,\theta$. 
Then there does not exists any constant $B_{ab} > 0$ such that 
\begin{equation} \label{S2.thm.ell.p}
\| x - S_\theta x \|_{\ell^p} \leq B_{ab} \theta^{-(b-a)} \| x \|_{\ell^q} 
\quad \ \forall x \in \ell^q, \ \ \forall \theta \geq 1,
\end{equation}
$B_{ab}$ independent of $x,\theta$. 
\end{theorem}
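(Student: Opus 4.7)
The plan is to argue by contradiction, combining the two smoothing inequalities to manufacture an infinite, uniformly separated set of vectors inside a bounded subset of $\ell^q$, and then to rule this out by Pitt's theorem. Set $p:=1/a\in(1,\infty]$ and $q:=1/b\in[1,\infty)$ (so $q<p$), and assume a family $(S_\theta)_{\theta\geq 1}$ satisfies both \eqref{S1.thm.ell.p} and \eqref{S2.thm.ell.p}.

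First I would test the inequalities on the canonical basis $\{e_k\}_{k\geq 1}$. One has $\|e_k\|_{\ell^p}=\|e_k\|_{\ell^q}=1$ and, with the convention $1/\infty=0$, $\|e_k-e_j\|_{\ell^p}=2^a$, $\|e_k-e_j\|_{\ell^q}=2^b$ for $k\neq j$. Since $q\leq p$, the embedding $\ell^q\hookrightarrow\ell^p$ has norm one, i.e.\ $\|y\|_{\ell^p}\leq\|y\|_{\ell^q}$ on $\ell^q$. Applying \eqref{S2.thm.ell.p} to $e_k-e_j\in\ell^q$ and using this embedding yields
\[
\|S_\theta e_k-S_\theta e_j\|_{\ell^q}
\;\geq\;\|S_\theta e_k-S_\theta e_j\|_{\ell^p}
\;\geq\;2^a-2^b B_{ab}\,\theta^{-(b-a)}.
\]
Since $b-a>0$, I would then fix $\theta_0\geq 1$ large enough that $2^b B_{ab}\theta_0^{-(b-a)}\leq 2^{a-1}$, obtaining the uniform separation $\|S_{\theta_0}e_k-S_{\theta_0}e_j\|_{\ell^q}\geq 2^{a-1}$ for every $k\neq j$. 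Meanwhile, \eqref{S1.thm.ell.p} gives $\|S_{\theta_0}e_k\|_{\ell^q}\leq A_{ab}\theta_0^{b-a}$ for each $k$, so the sequence $\{S_{\theta_0}e_k\}_{k\geq 1}$ lies in a fixed bounded subset of $\ell^q$.

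The contradiction will come from Pitt's theorem: for $1\leq q<p<\infty$, every bounded linear operator $\ell^p\to\ell^q$ is compact, and the same is true for every bounded linear operator $c_0\to\ell^q$ when $1\leq q<\infty$. In the interior range $0<a<b\leq 1$ we have $1\leq q<p<\infty$, so $S_{\theta_0}:\ell^p\to\ell^q$ is compact and the bounded sequence $\{e_k\}$ in $\ell^p$ must have a subsequence whose $S_{\theta_0}$-image is Cauchy in $\ell^q$, contradicting the uniform lower bound just established.

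The delicate case, and the main obstacle, is $a=0$ (so $p=\infty$): because $\ell^\infty$ is neither reflexive nor separable, Pitt's theorem does not apply directly to operators out of $\ell^\infty$. The remedy is that the test vectors $e_k$ actually live in the separable subspace $c_0\subset\ell^\infty$, so the restriction $S_{\theta_0}|_{c_0}:c_0\to\ell^q$ is bounded and hence compact by the $c_0$ version of Pitt's theorem; the same contradiction then closes this case. The first (qualitative) statement of the theorem follows from the second by choosing any admissible pair $(a,b)$.
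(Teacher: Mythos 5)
Your proof is correct, and it takes a genuinely different route from the paper. The paper's argument is elementary and self-contained: it represents $S_\theta$ by its matrix $c_{kn}(\theta)$ on the canonical basis, uses a Rademacher-type averaging over $\pm1$ vectors (Lemmas \ref{lemma:UN}--\ref{lemma:wow.matrix}) to show that \eqref{S1.thm.ell.p} forces $\bigl(\sum_{k\le N}|c_{kk}(\theta)|^q\bigr)^{1/q}\le A_{ab}\theta^{b-a}N^{1/p}$, while \eqref{S2.thm.ell.p} applied to the $e_k$ forces $|c_{kk}(\theta_*)|\ge\frac12$ for a fixed large $\theta_*$, so that the same quantity is $\ge\frac12 N^{1/q}$; the mismatch $N^{1/q}$ versus $N^{1/p}$ as $N\to\infty$ gives the contradiction. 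You instead derive from \eqref{S2.thm.ell.p} a uniform $\ell^q$-separation of the vectors $S_{\theta_0}e_k$ for one fixed $\theta_0$ and conclude by Pitt's theorem that the bounded operator $S_{\theta_0}:\ell^p\to\ell^q$ (or its restriction to $c_0$ when $p=\infty$ --- a case you correctly isolate, since Pitt does not apply to $\ell^\infty$ itself) cannot be compact; your computations ($\|e_k-e_j\|_{\ell^p}=2^a$, the reverse triangle inequality, the norm comparison $\|\cdot\|_{\ell^p}\le\|\cdot\|_{\ell^q}$ for $q\le p$) are all right. The trade-off: your proof is shorter and only uses \eqref{S1.thm.ell.p} at the single parameter $\theta_0$ (mere boundedness of one operator, not the quantitative growth in $\theta$), but it leans on Pitt's theorem, a nontrivial classical result; the paper's proof is longer but entirely elementary, resting only on H\"older's inequality and a combinatorial averaging identity.
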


The second part of Theorem \ref{thm:ell.p} implies the first one, 
and it says that $\ell^p$ spaces 
have no smoothing operators even if we consider a scale where $\mI$ 
is any subset of $[0,1]$ containing at least two distinct elements.
To prove Theorem \ref{thm:ell.p}, we begin with two lemmas. 

\begin{lemma} \label{lemma:UN}
Let $U := \{ 1, -1 \}$, and let $N \geq 1$ be an integer. 
Consider the set $U^N$ of vectors $x = (x_1, \ldots, x_N)$ 
with components $x_n \in U$.
For every $k,n \in \{ 1, \ldots, N \}$ one has 
\[
\sum_{x \in U^N} x_k x_n = 2^N \d_{kn},
\]
where $\d_{kn} = 1$ if $k=n$ and $\d_{kn} = 0$ if $k \neq n$. 
\end{lemma}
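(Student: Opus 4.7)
The plan is to split into the two cases $k=n$ and $k\neq n$, both of which are essentially one-line computations using independence of the coordinates.

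First I would handle the diagonal case $k=n$. Since each $x_k \in \{1,-1\}$, one has $x_k^2 = 1$ identically on $U^N$, so the sum reduces to the cardinality of $U^N$, which is $2^N$. This matches $2^N \delta_{kk}$.

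For the off-diagonal case $k\neq n$, I would factor the sum by separating the two distinguished coordinates from the remaining $N-2$. Writing
\[
\sum_{x\in U^N} x_k x_n = \Big(\prod_{i\notin\{k,n\}} \sum_{x_i\in U} 1 \Big) \Big(\sum_{x_k\in U} x_k\Big)\Big(\sum_{x_n\in U} x_n\Big),
\]
the first factor equals $2^{N-2}$, while each of the two remaining factors is $1 + (-1) = 0$. Hence the sum vanishes, which matches $2^N\delta_{kn} = 0$.

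There is no real obstacle here; the identity is just the statement that the $\pm 1$ characters on $U^N$ are orthogonal coordinate by coordinate. I expect this lemma to be a preparatory combinatorial step (presumably to be used in the proof of Theorem \ref{thm:ell.p} via a probabilistic/averaging argument over sign vectors, in the spirit of a Khintchine-type estimate), so the proof itself should occupy only a few lines.
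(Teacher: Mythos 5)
Your proof is correct and follows essentially the same route as the paper: the diagonal case is the cardinality count $|U^N| = 2^N$, and the off-diagonal case isolates the coordinates $x_k, x_n$ and uses $\sum_{x_k \in U} x_k = 0$ (the paper organizes this by summing over the remaining $N-2$ coordinates first rather than writing the full product, but the argument is identical).
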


\begin{proof}
Let $k,n \in \{ 1, \ldots, N \}$, with $k=n$.
Then $\sum_{x \in U^N} x_k^2 = \sum_{x \in U^N} 1$, 
which is the cardinality of $U^N$, namely $2^N$. 
Now let $k,n \in \{ 1, \ldots, N \}$, with $k \neq n$, 
and consider first the case $N=2$: 
then $k=1$ and $n=2$ or vice versa, and 
\[
\sum_{x = (x_1, x_2) \in U^2} x_1 x_2 
= \Big( \sum_{x_1 \in U} x_1 \Big) \Big( \sum_{x_2 \in U} x_2 \Big) 
= 0.
\]
Finally, let $k,n \in \{ 1, \ldots, N \}$, with $k \neq n$, and $N \geq 3$. 
For every $x \in U^N$ let $x' \in U^{N-2}$ be the vector $x$ 
without its components $x_k, x_n$. Then 
\[
\sum_{x \in U^N} x_k x_n 
= \sum_{x' \in U^{N-2}} \Big( \sum_{(x_k, x_n) \in U^2} x_k x_n \Big) = 0.
\qedhere
\]
\end{proof}

\begin{lemma} \label{lemma:wow.matrix}
Let $N \geq 1$ be an integer, 
and let $(c_{kn})$ be an $N \times N$ matrix 
with entries $c_{kn} \in \C$, $k,n = 1, \ldots, N$. 
Let $1 \leq q < \infty$, $R \geq 0$. 
Assume that 
\begin{equation} \label{hyp.magic}
\Big( \sum_{k = 1}^N \Big| \sum_{n=1}^N c_{kn} x_n \Big|^q \Big)^{\frac{1}{q}} \leq R 
\qquad 
\forall x_1, \ldots, x_N \in \{ 1, -1 \}.
\end{equation}
Then 
\[
\Big( \sum_{k=1}^N |c_{kk}|^q \Big)^{\frac{1}{q}}  \leq R.
\]
Also, 
if $\sup_{k=1, \ldots, N} \big| \sum_{n=1}^N c_{kn} x_n \big| \leq R$
for all $x_1, \ldots, x_N \in \{ 1, -1 \}$, 
then  $\sup_{k=1, \ldots, N} |c_{kk}| \leq R$.
\end{lemma}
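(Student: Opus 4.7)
The plan is to extract the diagonal entries $c_{kk}$ by averaging over the sign vectors $x \in U^N := \{1,-1\}^N$, and then to invoke the hypothesis \eqref{hyp.magic} on each $x$ separately.

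The key ingredient is the elementary pointwise inequality
\[
|b|^q \leq \frac{|a+b|^q + |a-b|^q}{2} \qquad \forall\, a,b \in \C, \ q \geq 1,
\]
which I would establish from the triangle inequality $2|b| = |(a+b)-(a-b)| \leq |a+b| + |a-b|$ combined with the convexity of $t \mapsto t^q$ on $[0,\infty)$ (applied to $s := |a+b|$ and $t := |a-b|$).

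Given this, for any fixed $k$ and any fixed choice of the remaining signs $(x_n)_{n \neq k}$, setting $a := \sum_{n \neq k} c_{kn} x_n$ and $b := c_{kk}$, the inner sum $\sum_n c_{kn} x_n$ takes the two values $a+b$ and $a-b$ as $x_k$ runs over $\{1,-1\}$. The inequality above then gives
\[
|c_{kk}|^q \leq \frac{1}{2} \sum_{x_k \in \{1,-1\}} \Big|\sum_{n=1}^N c_{kn} x_n\Big|^q,
\]
and averaging further over the remaining signs yields
\[
|c_{kk}|^q \leq \frac{1}{2^N} \sum_{x \in U^N} \Big|\sum_{n=1}^N c_{kn} x_n\Big|^q.
\]
Summing over $k$, swapping the two finite sums, and applying the hypothesis \eqref{hyp.magic} to each $x \in U^N$ gives $\sum_k |c_{kk}|^q \leq R^q$, which is the first claim.

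The ``Also'' statement follows from the same argument performed pointwise, without averaging: for each $k$ and each choice of $(x_n)_{n \neq k}$, one has $|c_{kk}| = |b| \leq \tfrac{1}{2}(|a+b|+|a-b|)$ by the triangle inequality, and both $|a+b|$ and $|a-b|$ are bounded by $R$ (taking respectively $x_k = 1$ and $x_k = -1$ in the hypothesis), so $|c_{kk}| \leq R$ for every $k$. I do not expect a serious obstacle: the argument is the standard Rademacher averaging trick, and the only step that requires care is the pointwise convexity inequality. I note that Lemma \ref{lemma:UN} would provide a cleaner alternative in the case $q=2$, since then $\frac{1}{2^N} \sum_x |\sum_n c_{kn} x_n|^2 = \sum_n |c_{kn}|^2 \geq |c_{kk}|^2$ by orthogonality; for general $q \geq 1$, however, the convexity route above seems the most direct path.
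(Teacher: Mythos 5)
Your proof is correct. It reaches exactly the paper's key intermediate inequality $|c_{kk}|^q \leq 2^{-N}\sum_{x\in U^N}\big|\sum_n c_{kn}x_n\big|^q$ (equation \eqref{ckk.2} in the paper), but by a different route: the paper first uses the Rademacher orthogonality relation of Lemma \ref{lemma:UN} to write the exact identity $c_{kk}=2^{-N}\sum_{x\in U^N}x_k\sum_n c_{kn}x_n$, then applies the triangle inequality and H\"older's inequality over the $2^N$ sign vectors; you instead symmetrize only in the $k$-th coordinate, using $2b=(a+b)-(a-b)$ together with the convexity of $t\mapsto t^q$, and then average over the remaining coordinates. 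Your version is slightly more self-contained (it needs neither the orthogonality lemma nor H\"older), while the paper's identity-first approach isolates the algebraic content in Lemma \ref{lemma:UN} and makes the $q=\infty$-type ``Also'' statement drop out of the same displayed formula \eqref{ckk.1}; your handling of that last claim, via the pointwise bound $|b|\leq\tfrac12(|a+b|+|a-b|)\leq R$, is equally valid and in fact even more direct. The one step that deserved care --- the inequality $|b|^q\leq\tfrac12(|a+b|^q+|a-b|^q)$ --- is justified correctly by triangle inequality plus two-point Jensen, so there is no gap.
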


\begin{proof}
Let $k \in \{ 1, \ldots, N \}$. By Lemma \ref{lemma:UN}, 
\[
c_{kk} = \sum_{n=1}^N c_{kn} \d_{kn} 
= \sum_{n=1}^N c_{kn} 2^{-N} \sum_{x \in U^N} x_k x_n 
= 2^{-N} \sum_{x \in U^N} x_k \sum_{n=1}^N c_{kn} x_n.
\]
By triangular inequality, since $|x_k|=1$,
\begin{equation} \label{ckk.1}
|c_{kk}| 
\leq 2^{-N} \sum_{x \in U^N} \Big| \sum_{n=1}^N c_{kn} x_n \Big|.
\end{equation}
If $1 < q < \infty$, then, by H\"older's inequality, 
\[
\sum_{x \in U^N} \Big| \sum_{n=1}^N c_{kn} x_n \Big|
\leq 
\Big( \sum_{x \in U^N} \Big| \sum_{n=1}^N c_{kn} x_n \Big|^q \Big)^{\frac{1}{q}}
\Big( \sum_{x \in U^N} 1 \Big)^{\frac{1}{q'}} 
= \Big( \sum_{x \in U^N} \Big| \sum_{n=1}^N c_{kn} x_n \Big|^q \Big)^{\frac{1}{q}} 
2^{N(1 - \frac{1}{q})},
\]
where $\frac{1}{q} + \frac{1}{q'} = 1$. Therefore, by \eqref{ckk.1}, 
\begin{equation} \label{ckk.2}
|c_{kk}| 
\leq 2^{-\frac{N}{q}} 
\Big( \sum_{x \in U^N} \Big| \sum_{n=1}^N c_{kn} x_n \Big|^q \Big)^{\frac{1}{q}}.
\end{equation}
Note that \eqref{ckk.2} also holds for $q=1$, in which case it is the same as \eqref{ckk.1}.  
Taking the $q$-th power of \eqref{ckk.2}, 
summing over $k=1, \ldots, N$,
and using assumption \eqref{hyp.magic} gives
\[
\sum_{k=1}^N |c_{kk}|^q 
\leq 2^{-N} \sum_{x \in U^N} \sum_{k=1}^N \Big| \sum_{n=1}^N c_{kn} x_n \Big|^q
\leq 2^{-N} \sum_{x \in U^N} R^q 
= R^q,
\]
which is the thesis. 
The last line of the statement follows from \eqref{ckk.1} 
by taking the sup over $k=1, \ldots, N$.
\end{proof}

Now we prove Theorem \ref{thm:ell.p}, using finite approximations 
of the infinite matrix representing $S_\theta$. 
Recall the notation $\N_1 = \{1, 2, \ldots \}$. 

\begin{proof}[Proof of Theorem \ref{thm:ell.p}]
Let $e_1 = (1, 0, 0, \ldots)$, 
$e_2 = (0,1,0, \ldots)$, etc., 
namely 
$(e_n)_k = \delta_{nk}$ for all $k,n \in \N_1$. 
For every $\theta \geq 1$, the operator $S_\theta$ maps $\ell^p$ into $\ell^q$, 
therefore, for every $n \in \N_1$, 
$S_\theta e_n$ is an $\ell^q$ sequence of complex numbers, say
\begin{equation} \label{def ckn}
S_\theta e_n = ( c_{1 n}(\theta), c_{2 n}(\theta), c_{3 n}(\theta), \ldots),
\end{equation}
where $c_{kn}(\theta)$ is the $k$-th element of the sequence $S_\theta e_n$. 
Let $N \in \N_1$ and 
\begin{equation} \label{x finito}
x := (x_1, \ldots, x_N, 0, 0, \ldots) = \sum_{n=1}^N x_n e_n, 
\quad \ x_1, \ldots, x_N \in U := \{ 1, -1 \}. 
\end{equation}
Since $S_\theta$ is linear, 
by \eqref{def ckn}, \eqref{x finito} one has 
\[
S_\theta x 
= S_\theta \Big( \sum_{n=1}^N x_n e_n \Big)
= \sum_{n=1}^N x_n S_\theta e_n 
= y = (y_1, y_2, \ldots), 
\qquad 
y_k := \sum_{n=1}^N c_{kn}(\theta) x_n 
\quad \ \forall k \in \N_1.
\]
Then
\[
\Big( \sum_{k=1}^N \Big|\sum_{n=1}^N c_{kn}(\theta) x_n \Big|^q \Big)^{\frac{1}{q}}
= \Big( \sum_{k=1}^N |y_k|^q \Big)^{\frac{1}{q}}
\leq 
\Big( \sum_{k=1}^\infty |y_k|^q \Big)^{\frac{1}{q}}
= \| S_\theta x \|_{\ell^q}
\]
and, by \eqref{S1.thm.ell.p}, 
\[
\| S_\theta x \|_{\ell^q} 
\leq A_{ab} \theta^{b-a} \| x \|_{\ell^p} 
= A_{ab} \theta^{b-a} N^{\frac{1}{p}} 
\]
for all $x_1, \ldots, x_N \in U$, all $\theta \geq 1$. 
By Lemma \ref{lemma:wow.matrix} applied with $R = A_{ab} \theta^{b-a} N^{\frac{1}{p}}$
we get
\begin{equation} \label{contra.S1}
\Big(\sum_{k=1}^N |c_{kk}(\theta)|^q \Big)^{\frac{1}{q}} 
\leq A_{ab} \theta^{b-a} N^{\frac{1}{p}}.
\end{equation}

Now assume, by contradiction, that there exists $B_{ab} > 0$ such that 
\eqref{S2.thm.ell.p} holds. 
For every $k \in \N_1$, $\theta \geq 1$, one has 
\[
|1 - c_{kk}(\theta)| 
= | (e_k - S_\theta e_k)_k |
\leq \Big( \sum_{n=1}^\infty |(e_k - S_\theta e_k)_n|^p \Big)^{\frac{1}{p}} 
= \| e_k - S_\theta e_k \|_{\ell^p} 
\]
and, by \eqref{S2.thm.ell.p},
\[
\| e_k - S_\theta e_k \|_{\ell^p} 
\leq B_{ab} \theta^{-(b-a)} \| e_k \|_{\ell^q}
= B_{ab} \theta^{-(b-a)}.
\]
Since $b-a>0$, there exists $\theta_* \geq 1$ such that 
$B_{ab} \theta_*^{-(b-a)} \leq \frac12$. 
Hence $|1 - c_{kk}(\theta_*)| \leq \frac12$ for all $k \in \N_1$.  
As a consequence, $|c_{kk}(\theta_*)| \geq \frac12$ for all $k \in \N_1$, 
and 
\begin{equation} \label{contra.S2}
\Big(\sum_{k=1}^N |c_{kk}(\theta_*)|^q \Big)^{\frac{1}{q}} 
\geq \Big(\sum_{k=1}^N \frac{1}{2^q} \Big)^{\frac{1}{q}} 
= \frac12 N^{\frac{1}{q}}.
\end{equation}
From \eqref{contra.S1}, \eqref{contra.S2} it follows that
\[
\frac12 N^{\frac{1}{q}} \leq A_{ab} \theta_*^{b-a} N^{\frac{1}{p}},
\]
namely $N^{\frac{1}{q} - \frac{1}{p}} = N^{b-a} \leq C$, where $C := 2 A_{ab} \theta_*^{b-a}$ does not depend on $N$. 
For $N \to \infty$ this gives a contradiction, 
and the theorem is proved.
\end{proof}

\begin{ex} 
\label{ex:16}
(\emph{Lebesgue space $L^p(\Om,\C)$ with $|\Om| < \infty$}).
Let $\Om \subset \R^d$ be a measurable set of positive, finite Lebesgue measure $|\Om|$. 
For $p \in [1,\infty]$, let $L^p(\Om,\C)$ be the Lebesgue space 
of complex-valued, measurable functions with finite norm 
\[
\| u \|_{L^p(\Om)} = \Big( \int_\Om |u(x)|^p \, dx \Big)^{\frac{1}{p}}
\quad \ \text{for } p \in [1,\infty); 
\quad \ 
\| u \|_{L^\infty(\Om)} = \mathrm{ess} \sup_{\Om} |u|.
\]
Let $a_0 := 0$, $\mI := [0,d]$. 
For $a \in \mI$, let 
\[
E_a := L^p(\Om,\C), \quad \ 
\| u \|_a := |\Om|^{-\frac{1}{p}} \| u \|_{L^p(\Om)}, \quad \ 
p := \frac{d}{d-a}\,, 
\]
with $p = \infty$ for $a=d$. 
By H\oe{}lder's inequality, one has 
\[
\| u \|_a \leq |\Om|^{-\frac{1}{p}} \Big( \int_\Om |u|^{p \lm} \Big)^{\frac{1}{p\lm}} 
\Big( \int_\Om 1^\mu \Big)^{\frac{1}{p \mu}} = \| u \|_b
\]
for all $a,b \in \mI$, $a \leq b$, where 
$p = \frac{d}{d-a}$, 
$q = \frac{d}{d-b}$, 
$\lm = \frac{q}{p}$, 
and $\frac{1}{\lm} + \frac{1}{\mu} = 1$. 
Thus $(E_a)_{a \in \mI}$ is a scale of Banach spaces 
satisfying \eqref{0303.1}. 

The interpolation property \eqref{interpolazione} follows directly 
from H\oe{}lder's inequality (without using \eqref{S1}-\eqref{S2}): 
let $a,b,c \in \mI$, with $a \leq b \leq c$, and let 
$p_a := \frac{d}{d-a}$, 
$p_b := \frac{d}{d-b}$, 
$p_c := \frac{d}{d-c}$. 
Let $b = \th a + (1-\th) c$ for some $\th \in [0,1]$. 
Then (despite the different definition of $p_a, p_b, p_c$ with respect to Example \ref{ex:15})
$p_a, p_b, p_c$ satisfy 
\eqref{interpolaz.ppp}, \eqref{interpolaz.lm.mu}.
Write $|u|^{p_b}$ as the product 
$|u|^{\th p_b} |u|^{(1-\th) p_b}$; 
by H\oe{}lder's inequality, 
\begin{align*}
\| u \|_b 
& = |\Om|^{- \frac{1}{p_b}} 
\Big( \int_\Om |u|^{\th p_b} |u|^{(1-\th) p_b} \, dx \Big)^{\frac{1}{p_b}} 
\\
& \leq |\Om|^{- \frac{\th}{p_a} - \frac{1-\th}{p_c}} 
\Big( \int_\Om |u|^{\th p_b \lm} \, dx \Big)^{\frac{1}{\lm p_b}} 
\Big( \int_\Om |u|^{(1-\th) p_b \mu} \, dx \Big)^{\frac{1}{\mu p_b}} 
= \| u \|_a^\th \| u \|_c^{1-\th},
\end{align*}
which is \eqref{interpolazione}.

Like in Example \ref{ex:15}, the difficulty is in the construction 
of the smoothing operators. 
An attempt is the following. Given $u \in E_0 = L^1(\Om)$, 
extend it trivially to $\R^d$ by setting $u=0$ outside $\Om$, 
then define $S_\theta u := (u \ast \psi_\theta)_{|\Om}$, 
namely the restriction to $\Om$ of the convolution $u \ast \psi_\theta$,
with $\psi_\theta$ defined in Example \ref{ex:5}. 

The set $E_\infty := \cap_{a \in [0,d]} E_a$ is the space $E_d = L^\infty(\Om)$. 
Since $S_\theta$ maps $E_0 = L^1(\Om)$ into $E_\infty = E_d = L^\infty(\Om)$, 
property \eqref{S.generic} is satisfied.  

The smoothing property \eqref{S1} is also satisfied. 
To prove it, let $a,b \in \mI$, $a \leq b$, 
and let $p = \frac{d}{d-a}$, $q = \frac{d}{d-b}$. 
By Young's convolution inequality, one has 
$\| u \ast \psi_\theta \|_{L^q} 
\leq \| u \|_{L^p} \| \psi_\theta \|_{L^r}$
where $r$ satisfies 
$\frac{1}{p} + \frac{1}{r} = 1 + \frac{1}{q}$.  
By rescaling, one has $\| \psi_\theta \|_{L^r} 
= \theta^{d - \frac{d}{r}} \| \psi \|_{L^r}$,
and $d(1-\frac{1}{r}) = b-a$, whence \eqref{S1} follows. 

However, in general, \eqref{S2} does not hold. 
To show it, we consider the case in which $\Om$ is the cube $[0,2\pi]^d$,
so that Fourier series can be used, 
and we mimic the argument of Example \ref{ex:15}.
Assume that \eqref{S2} holds. 
Given $\theta \geq 1$, take an integer vector $k \in \Z^d$ with $|k| > \theta$, 
and consider the function $u(x) = e^{i k \cdot x}$. 
Then, recalling \eqref{1409.1}, one has $S_\theta u = 0$, 
and, by direct calculation, $\| u \|_a = 1$ for all $a \in [0,d]$ 
(just because $|u(x)| = 1$ for all $x$). 
Therefore, by \eqref{S2}, $1 \leq B_{ab} \theta^{-(b-a)}$;  
this holds for all $\theta \geq 1$, 
and hence, for $a<b$, we have a contradiction. 
\end{ex}

More on scales of Banach spaces can be found
in the article \cite{KP} of Krein and Petunin, 
and in the book \cite{Caps} of Caps.

\begin{footnotesize}

\end{footnotesize}

\bigskip

Pietro Baldi

\smallskip

Dipartimento di Matematica e Applicazioni ``R. Caccioppoli''

Universit\`a di Napoli Federico II  

Via Cintia, 80126 Napoli, Italy

\smallskip

\texttt{pietro.baldi@unina.it}

\end{document}